\def\N{{\mathbb N}}
\def\M{{\mathbb M}}
\def\R{{\mathbb R}}
\def\be{\begin{enumerate}}
\def\ee{\end{enumerate}}
\newcommand{\1}{{ 1\hspace*{-0.45ex}\rule{0.05ex}%
       {1.5ex}\hspace*{0.45ex}}}   
\newcommand{\BM}{Brownian motion}
\def\bc{\begin{center}}
\def\beq{\begin{equation}}
\def\bea{\begin{eqnarray}}
\def\beas{\begin{eqnarray*}}
\def\eeq{\end{equation}}
\def\eea{\end{eqnarray}}
\def\eeas{\end{eqnarray*}}
\def\ec{\end{center}}      
\def\f{\frac}
\numberwithin{equation}{section}
\newtheorem{theorem}{Theorem}[section]
\newtheorem{lemma}{Lemma}[section] 
\newtheorem{example}{Example}[section]
\newtheorem{remark}{Remark}[section]
\def\CP{ {\mathcal P}}
\def\CF{ {\mathcal F}}
\def\CN{ {\mathcal N}}
\begin{document}
\author{Siva R. Athreya}

\address{Siva R. Athreya \\ 8th, Mile Mysore Road,\\ Indian Statistical Institute
  \\Bangalore 560059 India.}
\email{athreya@isibang.ac.in}

\author{Thomas S. Salisbury} 

\address{Thomas S. Salisbury\\ Dept. of Math \& Stat.\\ York University\\ Toronto, Ontario\\
Canada M3J 1P3}  \email{salt@yorku.ca}

\keywords{Exit measure, $\psi$-Super Brownian motion,  Martingale change of measure,
Immortal particle representation, Backbone representation, Conditioned superprocesses}
\subjclass[2000]{Primary:  60J80, 60G57 Secondary: 60J25 60J45  }

\title{Blowup and  Conditionings of $\psi$-super Brownian Exit Measures}

\date{\today}

\begin{abstract}
We extend earlier results on conditioning of super-Brownian motion to general branching rules. We obtain representations of the conditioned process, both as an $h$-transform, and as an unconditioned superprocess with immigration along a branching tree. Unlike the finite-variance branching setting, these trees are no longer binary, and strictly positive mass can be created at branch points. This construction is singular in the case of stable branching. We analyze this singularity first by approaching the stable branching function via analytic approximations. In this context the singularity of the stable case can be attributed to blowup of the mass created at the first branch of the tree. Other ways of approaching the stable case yield a branching tree that is different in law. To explain this anomaly we construct a family of martingales whose backbones have multiple limit laws. 
\end{abstract}
\maketitle

\section{Introduction}\label{intro}

The $\psi$- super-Brownian motion $X_{t}$ is a measure valued diffusion with branching mechanism given by 
\[ \psi(\lambda) = a_1 \lambda + 
a_2 \lambda^2 + \int_0^\infty [e^{-\lambda r} -1 + \lambda r]\, \pi(dr),
\]
where $\lambda \geq 0$, $a_{1} \in \R$, $a_2\geq 0$, and $\pi(\cdot)$ is the associated L\'evy measure.   More precisely, the log-laplace functional of $X_{t}$,
\[ u(t,x) = - \log E_{\delta_{x}} \exp(- \int \phi(y) dX_{t}(y) )\]
solves the initial value problem 
\[ \frac{\partial u }{\partial t}  = \frac{1}{2} \Delta u - \psi (u), \,\, u(0,\cdot) = \phi(\cdot),\]
whenever $\phi$ is a non-negative bounded continuous function. These diffusions can be realised as scaling limits of a system of particles that perform  branching Brownian motions. The exit measure $X^{D}$ of such a process from a bounded domain $D$ is a random  measure on the boundary of $D$, supported on the set of points where the particles first exit the domain.  It is known that for $d \geq 3$ that the random measure does not see points (see Le Gall \cite{legall1} and Dynkin \cite{dy0}).

In Salisbury and Verzani \cite{sv1} and \cite{sv2}  the exit measure of super Brownian motion,$X^{D}$ with critical binary branching (that is, with branching mechanism $\psi(\lambda)=2\lambda^2$) is conditioned to charge small balls $\Delta^{z_i}_\epsilon=B(z_i,\epsilon)\cap\partial D$, where $z_1,\dots,z_n\in\partial D$. Letting $\epsilon\to 0$ they obtain a conditioned process, which is a martingale transform of super Brownian motion by a ``polynomial'' martingale of degree $n$. They describe this process in terms of a ``backbone'' consisting of a binary tree that realizes the trajectory of the mass that reaches $z_1,\dots,z_n$.  These results do not generalize to a stable branching mechanism $\psi(\lambda)=c\lambda^{1+\beta}$, since
``polynomials'' of the exit measure will not even be integrable, let alone give martingales, when $n\ge 2$. Understanding why formed the primary motivation for this paper.

Our aim  is to understand conditioning based on general branching mechanisms  well enough to analyze how these martingales blow up as we approach the stable case.  Consider a bounded domain $D  \subset \R^{d} $, when $d \geq 3$. Let $\epsilon >0$ and fix  $z_{i} \in \partial D$ for $i =1 ,2, \ldots, n$. As above, define $\Delta^{z_i}_\epsilon$ to be a ball on $\partial D$ of radius $\epsilon$.  We condition $\psi$-super Brownian motion to hit balls $\Delta^{z_i}_\epsilon$ and obtain both the martingale transform that represents this process, and the probabilistic representation of this process in terms of immigrating mass along a branching ``backbone''. We explicitly describe the evolution of the backbone tree, the manner mass is generated along the backbone, and the way it evolves afterwards (See Theorem \ref{theorem} in Section \ref{BrParticleDescrip}).  Unlike the results of \cite{sv1} and \cite{sv2}, we now have to handle the probabilities of multiple branches and the distribution of positive mass created at the branch points of the tree. 

We first take the limit  as $\epsilon\to 0$, when $d \geq 4 $ and $\psi$ is a real-analytic function.  We establish that the limit exists and is a martingale change of measure (see Theorem \ref{limitingXtransform} and Theorem \ref{h1h2} in Section \ref{asymptotics} ).  The proof here follows the road map laid out in \cite{sv1} but the significant estimates appear to require more delicate arguments (Lemma \ref{thetaest} and Lemma \ref{bound}). The limiting backbone  produced is a tree with precisely $n$ leaves (see Section \ref{limitBackbone}), but is no longer binary. Next we  let the branching mechanism approach the stable case ($\psi(\lambda) = c\lambda^{1 + \beta}$).   With this particular order of  taking the two limits, the backbone remains well behaved, as does the mass creation at non-branch points of the backbone. However, the mass created at branch points gets arbitrarily large, resulting in an explosion that allows us to pinpoint the source of the blowup (See Section \ref{explosion} and Theorem \ref{supermart}).  We also consider other ways of approaching the stable case, for example, to simply let $\epsilon\to 0$ with branching mechanism $\psi(\lambda)=c\lambda^{1+\beta}$. In this case,  the backbone remains well-behaved but has a different law than in the previous mechanism. To explain this anomaly we construct a simpler family of martingales whose backbones have multiple limit laws, interpolating between analogues of both types obtained above (See Section \ref{otherlimits}).

The decomposition of the conditioned superprocesses in terms of an
``immortal backbone''  has been considered in the literature, in other contexts.   In particular, $h$-transforms of critical super-Brownian motion  have been studied by Roelly-Coppoletta and Roualt \cite{rr1}, Evans and Perkins \cite{ep}, and  Overbeck \cite{ov}, \cite{ov2}, while the immortal particle representation was originally discovered in Evans \cite{ev}.  Other studies include Serlet \cite{serlet} and Etheridge \cite{etheridge}. Salisbury and Sezer \cite{ssezer} and Verzani \cite{verzani} consider more general conditionings for binary branching.  Moras \cite{moras} considers specific classes of unbounded domains $D$, again for binary branching.

Versions for non-binary branching were studied in Etheridge and Williams 
\cite{ew} and  Kyprianou et al \cite{klmr} (we learned of the latter after completing our research). In \cite{ew}, the super Brownian motion on all of $\R^{d}$, with stable $(1 + \beta )$ branching, is conditioned on survival until some fixed time $T$. This backbone has a Poisson number of
immortal trees (conditioned on there being at least one), along which mass
(conditioned to die before time $T$ ) is immigrated. The rate of immigration
is random and there is additional immigration  whenever the immortal tree
branches.  In the limit as $T \rightarrow \infty$ , the immortal trees
degenerate to the Evans immortal particle and the immigration (of
unconditioned mass) along the particle is dictated by a stable
subordinator. In \cite{klmr}, to study the  travelling wave equation
associated to the parabolic semi-group equation of  $\psi$-super-Brownian
motion, the authors show a similar  immortal backbone  on all of $\R$ (which they call a
''spine decomposition'').

\subsection{Layout of the paper}
The rest of the paper is organised as follows.  In the next section we discuss preliminaries with regard to conditioned diffusions, $\psi$-super Brownian motion  and potentials. The Palm formula in Lemma \ref{palmformula} and identity for potentials in Lemma \ref{harmonic}, presented here,  are used significantly in the  proof of  main results.  In Section \ref{BrParticleDescrip} we  prove Theorem \ref{theorem}. In particular, we define a martingale change of measure representing various conditionings and for each provide a description of the associated branching backbone representation. 

In Section \ref{hittingNpoints}, we consider a specific condition namely that of  the exit measure charging finitely many points on the boundary. The work here (as explained earlier) is divided into three parts. First for $\psi$-analytic, we  condition the exit measure to hit balls of radius $\epsilon$ and   establish a limit as $\epsilon \rightarrow 0$. This is proved in Theorem \ref{h1h2} and Theorem \ref{limitingXtransform}. A limiting backbone is then described in Section \ref{limitBackbone}.  Secondly, in Section \ref{explosion} we consider the second limiting procedure of $\psi$-analytic to approximate $\psi(\lambda) = c\lambda^{1+\beta}$ for $0< \beta \leq 1$. We explain the explosion effect precisely.  Finally in Section \ref{otherlimits}, we begin by explaining other possible limits (if the order of limits done earlier are interchanged). We conclude  by  constructing a family of related martingales whose backbones have multiple limit laws. These interpolate between analogues of the two types of limits obtained earlier.
 
\subsection{Acknowledgements} Research done in this paper was supported in part by NSERC. During the completion of this work: the authors visited the University of British Columbia;  the Fields Institute; Siva Athreya visited York University in Toronto; and Tom Salisbury visited the Indian Statistical Institute, Bangalore. We would like to thank all  these places for their kind hospitality.

\section{Preliminaries}\label{preliminaries}

\subsection{Notation}\label{notation}
For each set $A$, let $|A|$ denote its cardinality, and let $\CP(A)$ denote the collection of partitions of $A$. Impose an order on $A$. Then for any $\sigma \in \CP(A)$, we may order the sets in $\sigma$ by their smallest elements. Letting $\sigma(j)$ denote the $j$th element of $\sigma$ in this
order, we may switch at will between the following two notations:

\[ \prod_{C \in \sigma} \langle X^D, v^C \rangle = \prod_{j=1}^{| \sigma |} \langle X^D, v^{\sigma(j)}\rangle . \]

The following elementary combinatorial result will be convenient
\begin{lemma} \label{settheory}
Let $A \subset B \subset C.$ be subsets of $\{1,2 \ldots, n\}$. Then 
\[ \sum_{A\subset B \subset C} (-1)^{| B | } = (-1)^{| C | }1_{A=C} \]
\end{lemma}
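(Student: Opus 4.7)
My plan is a direct bijective/algebraic calculation. The sets $B$ sandwiched between $A$ and $C$ are in one-to-one correspondence with subsets $S \subseteq C \setminus A$, via the map $B \mapsto S := B \setminus A$, with inverse $S \mapsto A \cup S$. Under this reparametrization, $|B| = |A| + |S|$, and the sum decouples.

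Concretely, I would write
\[
\sum_{A \subset B \subset C} (-1)^{|B|} \;=\; \sum_{S \subset C \setminus A} (-1)^{|A|+|S|} \;=\; (-1)^{|A|} \sum_{S \subset C \setminus A} (-1)^{|S|}.
\]
Then I invoke the standard binomial identity
\[
\sum_{S \subset T} (-1)^{|S|} \;=\; (1 + (-1))^{|T|} \;=\; 1_{T = \emptyset},
\]
applied to $T = C \setminus A$. The sum vanishes unless $C \setminus A = \emptyset$, i.e., unless $A = C$ (recall we already have $A \subset C$). In the non-vanishing case, $A = C$ forces $|A| = |C|$, producing the factor $(-1)^{|C|}$, which is exactly the claimed right-hand side.

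There is no real obstacle here: the only thing to be careful about is the indexing convention (whether the containment $A \subset B \subset C$ is strict or non-strict, but both are non-strict here, so the parametrization by $S \subseteq C \setminus A$ is exact, with $S = \emptyset$ corresponding to $B = A$ and $S = C \setminus A$ corresponding to $B = C$). After that, everything reduces to the elementary fact that $\sum_{S \subset T}(-1)^{|S|}$ is the expansion of $(1-1)^{|T|}$.
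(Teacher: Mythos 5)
Your proof is correct and follows essentially the same approach as the paper's one-line argument, which observes that both sides equal $(-1)^{|A|}(1-1)^{|C\setminus A|}$; you have simply spelled out the reparametrization $B \mapsto B\setminus A$ and the binomial expansion in full detail.
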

\begin{proof} Both sides equal $(-1)^{|A|}(1-1)^{|C\setminus A|}$. See Lemma 2.1 of \cite{sv1} 
\end{proof}

\subsection{ Facts about conditioned diffusions} \label{facd}

First we recall some familiar formulae for conditioned Brownian motion.

Let $B$ be $d$-dimensional \BM\/ started from $x$, under a probability
measure $P_x$. Write 
$\tau_D=\tau_D(B)$ for the first exit time of $B$ from $D$.
Let $g:D\to[0,\infty)$ be bounded on compact subsets of $D$, and set
\begin{equation*}
  L_g=\frac{1}{2}\Delta -g.
\end{equation*}
Let $\xi_t$ be a process which, under a probability law $P^g_x$, has the
law of a diffusion with generator $L_g$ started at $x$ and killed upon leaving 
$D$. In other words, $\xi$ is a \BM\/ in $D$, killed at rate $g$. Write $\zeta$ 
for the lifetime of $\xi$. Then
\begin{equation}
  \label{killed process}
  P^g_x(\xi_t \in A, \zeta > t) = 
  P_x\Big(\exp -\int_0^t \, g(B_s)  \,ds,B_t \in A, \tau_D > t \Big). 
\end{equation}
Let $U^g f(x) = \int_0^\infty P^g_x(f(\xi_t) \1_{\{\zeta > t \}}) \,dt$ be the
potential operator for $L_g$. If $g=0$ we write $U$ for $U^g$.
If $0 \leq u$ is $L_g$-superharmonic, then the law 
of the $u$-transform of $\xi$ is determined by the formula
\begin{equation*} 
  P^{g,u}_x(\Phi(\xi) \1_{\{\zeta > t \}} ) 
  = \frac{1}{u(x)} 
  P_x^g(\Phi(\xi) u(\xi_t) \1_{\{\zeta > t\}}) 
\end{equation*} 
for $\Phi(\xi) \in \sigma\{\xi_s; s \leq t\}$.
Assuming that $0 < u < \infty$ on $D$, this defines a diffusion on 
$D$. 
If $u$ is $L_g$-harmonic, 
then it dies only upon reaching $\partial D$. 
In fact, the generator of the $u$-transform is 
\begin{equation*}
L_{g,u}f=\frac{1}{u} L_g(uf) 
= \frac{1}{2}\Delta f +\frac{1}{u}\nabla u\cdot\nabla f.
\end{equation*}

If $u = U^g f$ for some $f \geq 0$
(that is, if $u$ is a potential) then the $u$-transform dies in the
interior of $D$, and 
$P^{g,u}_x$ satisfies 
$$
  P^{g,u}_x(\Phi(\xi)) = \frac{1}{u(x)} 
  \int^\infty_0 P_x^g(  
  \Phi(\xi_{\leq t}) f(\xi_t)\1_{\{\zeta > t\}}) \,dt,
$$
where $\xi_{\leq t}$ is the process $\xi$ killed at time $t$. See \cite{a}.

More generally, if $L_gu=-f$ then by breaking $u$ into a potential plus a 
harmonic function one has that
\begin{equation}
  \label{u process}
  P^{g,u}_x(\Phi(\xi)1_{\{\zeta<\tau_D\}}) = \frac{1}{u(x)} 
  \int^\infty_0 P_x^g(  
  \Phi(\xi_{\leq t}) f(\xi_t)\1_{\{\zeta > t\}}) \,dt.
\end{equation}

\subsection{$\psi$ super Brownian-motion}\label{psiSBMsection}

Let \begin{equation} \label{psi} \psi(\lambda) = a_1 \lambda + 
a_2 \lambda^2 + \int_0^\infty [e^{-\lambda r} -1 + \lambda r]\, \pi(dr),
\end{equation}
where $\lambda \geq 0, a_2\geq 0,$ and $\pi(\cdot)$ is the associated L\'evy measure. We
will assume the following:
\begin{equation} \label{condition}a_1\ge 0; \qquad
\int_0^\infty \mbox{ min}(r, r^2) \,\pi(dr) < \infty.
\end{equation}

L\'evy process exist under the weaker condition $\int_0^\infty \min(1,r^2)\,\pi(dr)<\infty$, and indeed there is a well known construction of continuous state branching processes (CSBP) as time-changes of L\'evy processes. The stronger moment condition assumed above can be thought of as a condition for this CSBP (or equivalently, this time change) not to blow up in finite time. See \cite{g}. It is easily
seen that $ \psi^{(n)} (\lambda) = \f{d^n}{d \lambda^n }
\psi(\lambda)$ exists for all $n$ and $\lambda > 0$.

Let $D$ be a domain in $\R^d$. Take $\xi_t$ to denote Brownian motion on $D$, and let $X_t$ be the $\psi$-super Brownian motion on $\R^d$. Let $X^D$ be the associated exit measure on $\partial D$. In \cite{dy042}, Dynkin constructs this object, assuming $a_1=0$. But by Dawson's Girsanov theorem (see section 10.1.2 of \cite{daw}), applying Dynkin's construction to Brownian motion killed at rate $a_1$ yields precisely the desired $\psi$-super Brownian exit measure. Note that we are using the condition $a_1\ge 0$ at this point. For $a_1<0$ and $\pi=0$ one could in fact produce a superprocess that survives forever with positive probability, by taking $D$ large enough (see \cite{enpi}). Thus the exit measure could be infinite in that case. 

 $\N_x$ will denote the excursion measure. LeGall used this measure extensively in the case $\psi(\lambda)=2\lambda^2$ (see \cite{legall2}). For the general case see Dynkin \cite{dy042}. A key benefit to working under  $\N_x$ is that genealogies simplify -- all mass descends from a single massless initial individual. The price one pays for this simplification is that  $\N_x$ is an infinite measure. Only events that involve extinction at short times receive infinite mass, so $\N_x(X^D\neq 0)<\infty$. It can be realized by having the superprocess start with initial value $\gamma\delta_x$ under a probability measure $\mathbb{P}_{\gamma\delta_x}$, sending $\gamma\downarrow 0$, and renormalizing $\mathbb{P}_{\gamma\delta_x}$ to obtain a non-trivial limit. Alternatively, the superprocess with initial value $\mu$ (under a probability $\mathbb{P}_\mu$) can be realized in terms of a Poisson random measure having $\int\N_x\mu(dx)$ as intensity. See section \ref{BrParticleDescrip}

Let $e^D_\phi = \exp-\langle X^D,\phi\rangle .$ When convenient we will also denote this
$e^D(\phi)$ or just $e(\phi)$. Let $D_k$ be a sequence
of smooth subdomains increasing to $D$ and denote $X^{D_k}$ by $X^k$ and
$e^{D_k}_\phi$ by $e^k_\phi.$ Set 
\begin{equation}\label{Nt}
\CN_t(f) = \exp\Big(-\int_0^t \,
\psi'\big(\N_{\xi_s}(1-f)\big) \,ds\Big).
\end{equation}
Since $a_1\ge 0$ we have $\psi'(\lambda)\ge 0$ when $\lambda\ge 0$, so in particular $\CN_t(f)\le 1$ for $f\ge 0$. 

We will need the following results.

\begin{lemma} \label{background} Assume \eqref{condition} and let $D$ be a domain in $\R^d.$ 
\begin{enumerate}
\item Let $\Gamma\subset\partial D$. Then $g(y)=\N_y(X^D(\Gamma)>0)$ satisfies  $\frac12\Delta g = \psi(g)$.
\item Let $g$ be a non-negative solution to $\frac12\Delta g = \psi(g)$, and let $D_k$ be an
increasing sequence of smooth subdomains of $D$. Then for each $k$,
\[ \N_x (1-e^k_g) = g(x). \]

\item $\N_x (\langle X^D, \phi\rangle  e^D_f) = E_x( \phi ( \xi_{\tau_D}) 
\CN_{\tau_D}(e^D_f))$ for $f,\phi\ge 0$.
\end{enumerate}
\end{lemma}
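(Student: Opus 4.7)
My plan is as follows. For part (a), I would approximate $g$ by the log-Laplace functionals
\[
u_\lambda(y) = \N_y\bigl(1 - e^{-\lambda X^D(\Gamma)}\bigr), \qquad \lambda > 0.
\]
By Dynkin's construction (as adapted to general $\psi$ via Dawson--Girsanov, as noted in the paragraph preceding this lemma), each $u_\lambda$ solves $\tfrac12\Delta u_\lambda = \psi(u_\lambda)$ in $D$. As $\lambda\to\infty$, $u_\lambda\uparrow g$ pointwise, and $u_\lambda(y)\le\N_y(X^D\neq 0)<\infty$, so the family is locally uniformly bounded on compact subsets of $D$. Interior elliptic estimates applied to $\Delta u_\lambda = 2\psi(u_\lambda)$ then give local $C^2$ convergence, so the limit $g$ satisfies the same PDE in $D$. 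Part (b) uses the same log-Laplace characterization: $v(x) := \N_x(1-e^k_g)$ solves $\tfrac12\Delta v = \psi(v)$ in $D_k$ with $v=g$ on $\partial D_k$. Since $g$ itself is a bounded non-negative solution to this equation in $D\supset\overline{D_k}$ matching its own boundary trace on $\partial D_k$, uniqueness for the semilinear Dirichlet problem forces $v\equiv g$ on $D_k$.

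For part (c), the key idea is to differentiate a two-parameter Laplace functional. Set
\[
u_\theta(y) = \N_y\bigl(1 - \exp(-\langle X^D, f+\theta\phi\rangle)\bigr),
\]
so that $u_\theta$ solves $\tfrac12\Delta u_\theta = \psi(u_\theta)$ in $D$ with boundary data $f+\theta\phi$. Differentiating at $\theta = 0$ yields on the one hand
\[
\partial_\theta u_\theta(x)\big|_{\theta=0} = \N_x\bigl(\langle X^D,\phi\rangle e^D_f\bigr),
\]
and on the other a linearized boundary value problem: $w := \partial_\theta u_\theta|_{\theta=0}$ satisfies $\tfrac12\Delta w = \psi'(u_0)\,w$ in $D$ with $w=\phi$ on $\partial D$, where $u_0(y)=\N_y(1-e^D_f)$. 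Applying Feynman--Kac (using $\psi'\ge 0$ on $[0,\infty)$, which holds because $a_1\ge 0$) gives
\[
w(x) = E_x\Bigl[\phi(\xi_{\tau_D})\exp\Bigl(-\int_0^{\tau_D}\psi'\bigl(\N_{\xi_s}(1-e^D_f)\bigr)\,ds\Bigr)\Bigr] = E_x\bigl[\phi(\xi_{\tau_D})\,\CN_{\tau_D}(e^D_f)\bigr],
\]
which is exactly the right-hand side of (c).

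The main obstacle is justifying the $\theta$-differentiation under the infinite measure $\N_x$ in part (c). The dominated-convergence argument requires uniform-in-$\theta$ control of $\langle X^D,\phi\rangle e^D_{f+\theta\phi}$ for $\theta$ in a neighbourhood of $0$. This is feasible since $\N_x$ only assigns infinite mass to short-time extinction events (on which $\langle X^D,\phi\rangle$ itself is small), while on the complementary ``large mass'' side $e^D_{f+\theta\phi}$ decays exponentially. The smoothness of $\psi$ on $[0,\infty)$ guaranteed by (\ref{condition}) makes both the derivative of $\psi$ and the PDE linearization legitimate. Parts (a) and (b) involve no real difficulty beyond this standard semilinear-PDE machinery, and I expect the whole argument to be quite short once the differentiation step is set up carefully.
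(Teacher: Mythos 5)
The paper's ``proof'' of this lemma is simply a citation to Dynkin's books and Dawson's lecture notes; it does not give an argument. Your proposal is correct and is essentially the argument that those references carry out, so it is a genuine contribution relative to the text rather than a restatement. For parts (a) and (b), the monotone approximation of indicator data by $\lambda\mapsto u_\lambda$, interior elliptic estimates, and uniqueness of the Dirichlet problem (via the comparison principle, using $\psi'\geq 0$ and $g$ continuous, hence bounded, on $\overline{D_k}\subset D$) are exactly the right ingredients. For part (c), your plan is correct, and here is a tightening that avoids the two places you flagged as delicate. First, the differentiation under $\N_x$: write the difference quotient as
\[
\frac{u_\theta(x)-u_0(x)}{\theta}=\N_x\Bigl(e^D_f\cdot\frac{1-e^{-\theta\langle X^D,\phi\rangle}}{\theta}\Bigr),
\]
and note that $\frac1\theta(1-e^{-\theta y})$ increases to $y$ as $\theta\downarrow 0$ and is bounded by $y$, while $\N_x(\langle X^D,\phi\rangle\,e^D_f)\leq\N_x(\langle X^D,\phi\rangle)=E_x[\phi(\xi_{\tau_D})e^{-a_1\tau_D}]<\infty$ for bounded $\phi$. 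This gives the needed convergence by monotone/dominated convergence and is cleaner than the ``exponential decay on the large-mass side'' heuristic (which in fact fails if $f=0$, though the conclusion still holds because the first moment itself is finite). Second, rather than differentiating the PDE in $\theta$ (which would require $C^1$-in-$\theta$ regularity), apply the mean value theorem to $\psi$ directly: $w_\theta:=(u_\theta-u_0)/\theta$ solves $\frac12\Delta w_\theta=\psi'(\zeta_\theta)w_\theta$ in $D$ with $w_\theta=\phi$ on $\partial D$, where $u_0\leq\zeta_\theta\leq u_\theta\leq\sup f+\theta\sup\phi$ is uniformly bounded. Feynman--Kac then gives $w_\theta(x)=E_x\bigl[\phi(\xi_{\tau_D})\exp(-\int_0^{\tau_D}\psi'(\zeta_\theta(\xi_s))\,ds)\bigr]$, and letting $\theta\downarrow 0$ yields the stated identity with $\CN_{\tau_D}(e^D_f)=\exp(-\int_0^{\tau_D}\psi'(\N_{\xi_s}(1-e^D_f))\,ds)$.
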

\begin{proof}
Follows from  Theorem 4.2.1 in \cite{dy02}, Theorem 1.1. in Chapter 4 of \cite{dy042} and Theorem 11.7.1 in \cite{daw}.\\ 
\end{proof}

For $m\ge 2$ an integer,  $y\in \R^d,$ and measurable $\phi$ we define
\begin{equation} \label{b}
b(m,\phi, y) = (-1)^{m} \psi^{(m)} \big(\N_{y}(1-e_\phi)\big).
\end{equation}
Note that for $\lambda > 0$
\begin{equation}\label{psi^(n)}
\psi^{(m)} (\lambda)=
\begin{cases}
a_1 + 2a_2 \lambda +\int_0^\infty r[1- e^{-\lambda r}]\, \pi(dr), & m=1\\
2a_2 + \int_0^\infty r^2 e^{-\lambda r} \,\pi(dr), & m=2\\
(-1)^m\int_0^\infty\, r^m e^{-\lambda r}  \,\pi(dr), & m \ge 3,
\end{cases}
\end{equation}
so in particular, $b(m,\phi,y)$ is well defined and non-negative, for $m\ge 2$ and for any $\phi\ge 0$ such that $\N_y(\langle X^D,\phi\rangle>0)>0$. It is decreasing in $\phi$. 

We will need the following assumption on $\psi$ for certain results, which, among other things, makes the latter qualification unnecessary. 
$$ \mbox{ (A1)  \qquad $\exists \lambda_0>0$ such that $\int_1^\infty  e^{r\lambda_0}\,\pi(dr) <\infty$.} $$
Under (A1) we have that $\int_0^\infty  r^n\, \pi(dr)<\infty$ for $n\ge 2$, and by dominated convergence, 
\begin{equation}\label{psi_series}
\psi(\lambda)=a_1\lambda+a_2\lambda^2 + \sum_{j=2}^\infty (-1)^j\frac{\lambda^j}{j!}\int_0^\infty r^j\,\pi(dr)
\end{equation}
for $\lambda\le\lambda_0$. Thus in this case, \eqref{psi^(n)} will hold as well for $\lambda=0$.

Recursive Palm formulae for moments have a long history, and in this context are due to Dynkin -- see Theorem 1.1 of \cite{dy042} or Theorem 4.1 of \cite{dy041}. We work instead with a formulation along the lines of Lemma 2.6 in \cite{sv1}.
 
\begin{lemma} \label{palmformula}
Assume \eqref{condition} and let $N= \{1,2,3, \ldots n\}, n \geq 2.$ 
Let $D \subset \R^d$ be a
domain, $\phi\ge 0$, and let $\xi$ be a Brownian motion in $D$ with exit time
$\tau$. Let $\{ v_i \}$ be a family of positive measurable functions. Then 
$$
\N_x( e_\phi \prod_{i \in N}  \langle X^D , v_i \rangle ) =  
E_x
\Big( \sum_{\substack{\beta \in \CP(N)\\|\beta|\ge 2}}
\int_0^\tau  \,{\mathcal N}_t(e_\phi) 
b(|\beta|, \phi, \xi_{t}) \prod_{A \in \beta }\N_{\xi_{t}}
(e_\phi \prod_{i \in A} \langle X^D,v_i\rangle  ) \,dt \Big).
$$
\end{lemma}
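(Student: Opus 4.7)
The plan is to compute the moment on the left-hand side as a mixed derivative at zero of the Laplace functional of $X^D$ under $\N_x$, and then to exploit the semilinear PDE satisfied by that functional. Given $\theta = (\theta_1,\dots,\theta_n)$ in a neighbourhood of the origin, set $\phi_\theta = \phi + \sum_{i \in N}\theta_i v_i$ and
$$u(x,\theta) \;=\; \N_x\bigl(1 - e^D_{\phi_\theta}\bigr).$$
By Lemma~\ref{background}(b) (extended to the present non-negative boundary data via the Dawson--Girsanov argument recalled in Section~\ref{psiSBMsection}), $u(\cdot,\theta)$ is a non-negative solution of $\tfrac12\Delta u = \psi(u)$ in $D$ with boundary values $\phi_\theta$. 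Differentiating $1 - e^D_{\phi_\theta} = 1 - \exp(-\langle X^D,\phi_\theta\rangle)$ inside $\N_x$ gives
$$\partial_{\theta_1}\cdots\partial_{\theta_n} u(x,\theta)\bigr|_{\theta=0} \;=\; (-1)^{n-1}\,\N_x\Bigl(e_\phi\prod_{i\in N}\langle X^D,v_i\rangle\Bigr),$$
so it suffices to compute the left-hand side.

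Next I would differentiate the PDE $n$ times in $\theta$. Writing $w_A = \partial_A u$ and applying Faà di Bruno to $\partial_{\theta_1}\cdots\partial_{\theta_n}\psi(u)$, then splitting off the one-block ($|\beta|=1$) partition, yields the linear equation
$$\tfrac12\Delta w_N - \psi'(u)\,w_N \;=\; \sum_{\substack{\beta\in\CP(N)\\|\beta|\ge 2}} \psi^{(|\beta|)}(u)\,\prod_{A\in\beta} w_A,$$
with $w_N \equiv 0$ on $\partial D$ because $\phi_\theta$ is affine in each $\theta_i$ and $n\ge 2$. Since $\psi'(u)\ge 0$ (using $a_1\ge 0$ from (\ref{condition})), the Feynman--Kac recipe from Section~\ref{facd} expresses $w_N(x,\theta)$ as the negative of the $\xi$-expectation of the integral from $0$ to $\tau$ of the right-hand side, weighted by the multiplicative functional $\exp\bigl(-\int_0^s\psi'(u(\xi_r,\theta))\,dr\bigr)$. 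At $\theta=0$ this weight is precisely $\CN_s(e_\phi)$ from (\ref{Nt}).

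Evaluating at $\theta=0$ and using the identifications $\psi^{(m)}(u(\cdot,0)) = (-1)^m b(m,\phi,\cdot)$ and $w_A(\cdot,0) = (-1)^{|A|-1}\,\N_\cdot\bigl(e_\phi\prod_{i\in A}\langle X^D,v_i\rangle\bigr)$, the accumulated sign on each summand collapses to $(-1)^n$; combined with the overall $-1$ from Feynman--Kac and the $(-1)^{n-1}$ from the left-hand side these cancel to produce exactly the identity in the statement. The main obstacle is justifying the $n$-fold differentiation under $\N_x$ together with its interchange with the Feynman--Kac expectation; I would handle this by induction on $n$, with Lemma~\ref{background}(c) supplying the base case, and the Feynman--Kac representation above plus the monotonicity of $b(m,\phi,\cdot)$ in $\phi$ propagating local boundedness in $\theta$ of the mixed moments from order $n-1$ to order $n$. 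A secondary, routine technicality---the classical regularity of $u$ needed to differentiate the PDE pointwise---can be bypassed by first working on the smooth subdomains $D_k$ of Lemma~\ref{background}(b) and then passing to the limit, or equivalently by handling the semilinear equation throughout in its mild (integral) form.
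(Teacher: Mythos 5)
Your proof is correct, and the route is genuinely different from the paper's. Both proofs introduce the auxiliary parameters $\theta$ and differentiate $n$ times at $\theta=0$, and both produce the same partition combinatorics, but where they differ is at the object being differentiated. The paper never touches the PDE for the log-Laplace functional: it differentiates the first-order probabilistic Palm identity of Lemma~\ref{background}(c), $\N_x(\langle X^D,v_1\rangle e^D_f)=E_x\bigl(v_1(\xi_\tau)\CN_\tau(e^D_f)\bigr)$, and then performs an explicit Markov-property surgery on the resulting multiple time integrals to peel off the time of the first branch and reassemble the remaining factors into the $\N_{\xi_t}(\cdot)$ terms. You instead differentiate the semilinear equation $\tfrac12\Delta u=\psi(u)$ itself; Fa\`a di Bruno delivers the linear equation $\tfrac12\Delta w_N-\psi'(u)w_N=\sum_{|\beta|\ge 2}\psi^{(|\beta|)}(u)\prod_{A\in\beta}w_A$ in one stroke, a single Feynman--Kac application produces the $\CN_s(e_\phi)$ weight and the $\int_0^\tau$ integral, and the sign bookkeeping collapses just as you say (one should note, as a small additional point, that the collapse $(-1)\cdot(-1)^{|\beta|}\cdot(-1)^{n-|\beta|}=(-1)^{n-1}$ shows all terms of the source have a common sign at $\theta=0$, which is what legitimizes the Feynman--Kac representation). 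The trade is familiar: your version is structurally cleaner and shorter, and makes the origin of $\CN_s(e_\phi)$ transparent, but it imports an analytic obligation---enough classical regularity of $u(\cdot,\theta)$ to differentiate the PDE $n$ times and apply Feynman--Kac---that you rightly propose to discharge via the smooth exhaustion $D_k$ or the mild formulation. The paper's argument stays entirely inside the probabilistic calculus of $\CN$ and needs no PDE regularity beyond what Lemma~\ref{background}(c) already encodes, at the price of the more laborious Markov-property reorganization. Both must justify differentiation under $\N_x$ and $E_x$; the paper waves at this with ``given the conditions on $\pi$ this can be easily justified,'' while your inductive scheme via $n-1$st order bounds is a reasonable way to make that explicit.
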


\begin{proof} Let  $N^{\star} = \{ 2,3, \ldots, N \}$.
\begin{align*}
\N_x( e_\phi \prod_{i \in N} \langle X^D , v_i \rangle ) 
&= \N_x ( \langle X^D,v_1\rangle 
e_\phi \prod_{i \in N^{\star}} \langle X^D , v_i \rangle )\\ 
&= (-1)^{n-1}\left.
\f{\partial}{\partial \lambda_2}\f{\partial}{\partial \lambda_3}\cdots
\f{\partial}{\partial \lambda_n}
\right|_{\lambda_2=\ldots=\lambda_n=0}\hspace{-0.5in}
\N_x (\langle X^D,v_1\rangle  e( \phi + \sum_{i=2}^n \lambda_i v_i )),
\end{align*}
where we differentiate under the integral sign using monotone convergence. 
Using (c) of Lemma \ref{background} i.e. the one-dimensional case of the
Palm formula, the above expression equals
\begin{align*} 
&(-1)^{n-1}\left.
\f{\partial}{\partial \lambda_2}\f{\partial}{\partial \lambda_3}\cdots
\f{\partial}{\partial \lambda_n} 
\right|_{\lambda_2=\dots=\lambda_n=0}
\hspace{-0.5in} E_x \big( v_1(\xi_\tau) {\mathcal N}_\tau(e(\phi +
\sum_{i=2}^n \lambda_i v_i ) ) \big)
\end{align*}
Differentiating this expression under the integral sign (given the conditions on $\pi$ this can be easily justified) and using the definition of $b$, this equals
\begin{align}
\label{first}
&E_x \left ( v_1(\xi_\tau) {\mathcal N}_\tau(e_\phi ) \sum_{\sigma
\in \CP(N^{\star})}\prod_{j =1}^{| \sigma |} \int_0^\tau 
\sum_{\beta_j \in \CP(\sigma(j)) } (-1)^{| \beta_j | + 1} \psi^{| \beta_j
| +1} \N_{\xi_{t_j}}(1-e_\phi) \times \right. \nonumber \\
&\hspace{2.5in} \left . \times\prod_{A \in \beta_j }\N_{\xi_{t_j}} (e_\phi
\prod_{i \in A} \langle X^D,v_i\rangle  )\,dt_j \right)\nonumber \\
&\quad=E_x \left ( v_1(\xi_\tau)
{\mathcal N}_\tau(e_\phi ) \sum_{\sigma \in \CP(N^{\star})}
\prod_{j=1}^{| \sigma |} \int_0^\tau \sum_{\beta_j \in \CP(\sigma(j)) } 
b(|\beta_j | + 1 ,\phi,\xi_{t_j}) \times \right .\\ 
& \left . \hspace{2.5in} \times \prod_{A \in \beta_j }\N_{\xi_{t_j}} (e_\phi \prod_{i
\in A} \langle X^D,v_i\rangle  )\,dt_j \right). \nonumber 
\end{align}

Standard integration manipulations then show that the above equals
\begin{align*}
&E_x \left ( v_1(\xi_\tau) {\mathcal
N}_\tau(e_\phi ) \sum_{\sigma \in \CP(N^{\star})} \sum_{k=1}^{|
\sigma |} \int_0^\tau \sum_{\beta_k \in \CP(\sigma(k)) }
b(|\beta_k|+1, \phi, \xi_{t_k}) \prod_{A \in \beta_k }\N_{\xi_{t_k}}
(e_\phi \prod_{i \in A} \langle X^D,v_i\rangle  ) \times \right .\\ 
& \left. \hspace{1.5in}\times \prod_{j\neq k}\Big(\int_{t_k}^\tau \sum_{\beta_j
\in \CP(\sigma(j)) } b(|\beta_j| +1, \phi, \xi_{t_j}) \prod_{A \in \beta_j
}\N_{\xi_{t_j}} (e_\phi \prod_{i \in A} \langle X^D,v_i\rangle  ) \,dt_j\Big)\,dt_k\right)\\
&\quad=E_x
\left ( \sum_{\sigma \in \CP(N^{\star})} \sum_{k=1}^{| \sigma |}
\int_0^\tau \sum_{\beta_k \in \CP(\sigma(k)) } b(|\beta_k| +1, \phi,
\xi_{t_k}) \right.  \prod_{A \in \beta_k }\N_{\xi_{t_k}} (e_\phi \prod_{i \in A} \langle X^D,v_i\rangle  )  \times  \\ & \left.  \hspace{1.5in}\times E_{x} \left (
v_1(\xi_\tau) {\mathcal N}_\tau(e_\phi ) \prod_{j\neq
k}\int_{t_k}^\tau \sum_{\beta_j \in \CP(\sigma(j)) } b(|\beta_j| +1, \phi,
\xi_{t_j}) \times\right.\right.\\
&\left.\left.\hspace{2.5in}\times\prod_{A \in \beta_j }\N_{\xi_{t_j}} (e_\phi \prod_{i \in A}
\langle X^D,v_i\rangle  ) \mid {\CF}_k \, dt_j\right )\,dt_k\right)
\end{align*}
where  $\CF_k$ denote the filtration of $\xi_{t_k}.$ Applying the
Markov Property at time $t_k$, this equals 
\begin{align*} 
& E_x
\left ( \sum_{\sigma \in \CP(N^{\star})} \sum_{k=1}^{| \sigma |}
\int_0^\tau  \,{\mathcal N}_{t_k}(e_\phi ) \sum_{\beta_k \in \CP(\sigma(k)) } b(|\beta_k| +1, \phi,
\xi_{t_k}) \right.  \prod_{A \in \beta_k }\N_{\xi_{t_k}} (e_\phi \prod_{i \in A} \langle X^D,v_i\rangle  )  \times  \\ & \left.  \hspace{1.0in}\times E_{\xi_{t_k}} \left (
v_1(\xi_\tau) {\mathcal N}_\tau(e_\phi ) \prod_{j\neq
k}\int_{0}^\tau \sum_{\beta_j \in \CP(\sigma(j)) } b(|\beta_j| +1, \phi,
\xi_{t_j}) \times\right.\right.\\
&\left.\left.\hspace{2.0in}\times\prod_{A \in \beta_j }\N_{\xi_{t_j}} (e_\phi \prod_{i \in A}
\langle X^D,v_i\rangle  )\,dt_j\right )\,dt_k\right).
\end{align*}
Setting $M=\sigma(k)$ and summing over $N^{\star}$ this becomes
\begin{align*}
&E_x \left ( \sum_{\emptyset \neq M \subset N^{\star}}
\int_0^\tau \, {\mathcal N}_t(e_\phi ) \sum_{\gamma \in \CP(M) }
b(|\gamma|+1, \phi, \xi_{t}) \prod_{A \in \gamma }\N_{\xi_{t}}
(e_\phi \prod_{i \in A} \langle X^D,v_i\rangle  ) \times \right .\\  
&\hspace{1.0in}\times \sum_{\beta \in \CP(N^{\star} \setminus M )}
E_{\xi_{t}} \left ( v_1(\xi_\tau) {\mathcal N}_\tau(e_\phi )
\prod_{j=1}^{| \beta | }\int_{0}^\tau \sum_{\beta_j \in
\CP(\sigma(j)) } b(|\beta_j| +1, \phi, \xi_{s}) \times\right.\\
&\left.\left.\hspace{2.0in}\times \prod_{A \in
\beta_j }\N_{\xi_{s}} (e_\phi \prod_{i \in A} \langle X^D,v_i\rangle  )\,ds\right )
\,dt\right).
\end{align*}
Using the identity obtained in
\eqref{first}, this equals
\begin{align*}
&E_x \left ( \sum_{\emptyset \neq M \subset N^{\star}}
\int_0^\tau \,{\mathcal N}_t(e_\phi ) \sum_{\beta \in \CP(M) }
b(|\beta| +1, \phi, \xi_{t}) \prod_{A \in \beta }\N_{\xi_{t}}
(e_\phi \prod_{i \in A} \langle X^D,v_i\rangle  ) \times\right . \\ 
& \left. \hspace{1.5in} \times \N_{\xi_t}( e_\phi \prod_{i \in N \setminus
M} \langle  X^D , v_i \rangle ) \,dt\right ).
\end{align*}

For any $\beta \in \CP(N)$ with $|\beta|\ge 2$ we can realize a term of the above 
expression, letting $N\setminus M$ be the element of $\beta$ containing 1, and 
letting $\gamma$ be the restriction of $\beta$ to $M$. Therefore
$$
\N_x( e_\phi \prod_{i
\in N} \langle  X^D , v_i \rangle ) 
= E_x \Big( \sum_{\substack{\beta \in \CP(N)\\|\beta|\ge 2}}
\int_0^\tau\, {\mathcal N}_t(e_\phi ) b(|\beta|, \phi,
\xi_{t}) \prod_{A \in \beta }\N_{\xi_{t}} (e_\phi \prod_{i \in A}
\langle X^D,v_i\rangle  )\,dt \Big).
$$
\end{proof}

\begin{lemma} \label{expmom} Assume (A1). Let $D$ be a domain in $\R^d$ satisfying $\sup_{x \in D} E_x (\tau_D) < \infty,$
where $\tau_D$ is the exit time from $D$ for Brownian motion. Then there exists $\lambda >0$
such that 
\[ \sup_{x \in D} \N_x (\exp(\lambda \langle X^D, 1 \rangle) -1) < \infty. \]
\end{lemma}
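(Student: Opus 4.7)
The plan is to expand $\exp(\lambda\langle X^D,1\rangle)-1 = \sum_{n\ge 1}\lambda^n\langle X^D,1\rangle^n/n!$ and control the moments $M_n(x):=\N_x(\langle X^D,1\rangle^n)$ uniformly in $x\in D$, so that the resulting exponential generating function converges for small $\lambda>0$. Write $\bar M_n := \sup_{x\in D}M_n(x)$, $T := \sup_x E_x(\tau_D)<\infty$, and set $c_m := (-1)^m\psi^{(m)}(0)\ge 0$; under (A1), \eqref{psi^(n)} is valid at $\lambda=0$, so every $c_m$ ($m\ge 2$) is finite, and
\[ \tilde\psi(w) := \sum_{m\ge 2}\frac{c_m w^m}{m!} = a_2 w^2 + \int_0^\infty(e^{wr}-1-wr)\,\pi(dr) \]
is a real-analytic function on $(-\lambda_0,\lambda_0)$ with $\tilde\psi(0)=\tilde\psi'(0)=0$.

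First I would apply Lemma \ref{palmformula} with $\phi\equiv 0$ and $v_i\equiv 1$. Since $e_0=1$, $\CN_t(1)=e^{-a_1 t}\le 1$, and $b(|\beta|,0,y)=c_{|\beta|}$ is independent of $y$, the identity collapses to
\[ M_n(x) \le \sum_{\substack{\beta\in\CP(\{1,\ldots,n\})\\|\beta|\ge 2}} c_{|\beta|}\Big(\prod_{A\in\beta}\bar M_{|A|}\Big)E_x(\tau_D), \]
so $\bar M_n\le T\sum_{\beta,\,|\beta|\ge 2}c_{|\beta|}\prod_{A\in\beta}\bar M_{|A|}$ for $n\ge 2$. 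The case $n=1$ follows directly from Lemma \ref{background}(c): $\bar M_1\le \sup_x E_x(e^{-a_1\tau_D})\le 1$. Because each block $A$ in the sum above has $|A|<n$, induction on $n$ gives $\bar M_n<\infty$ for every $n$.

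Next I would introduce the dominating sequence defined by $M_1^*=1$ and, for $n\ge 2$,
\[ M_n^* := T\sum_{\substack{\beta\\|\beta|\ge 2}}c_{|\beta|}\prod_{A\in\beta}M_{|A|}^*, \]
so that $\bar M_n\le M_n^*$ by induction. The exponential formula for set partitions translates this recursion into the single formal functional equation $F^*(\lambda)=\lambda+T\tilde\psi(F^*(\lambda))$ for the exponential generating function $F^*(\lambda):=\sum_{n\ge 1}M_n^*\lambda^n/n!$. I would then apply the real-analytic implicit function theorem to $G(\lambda,w):=w-\lambda-T\tilde\psi(w)$ at $(0,0)$: since $\partial_w G(0,0)=1-T\tilde\psi'(0)=1\ne 0$, there is a unique real-analytic solution $w=F^*(\lambda)$ near $\lambda=0$ with $F^*(0)=0$. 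Its Taylor coefficients are forced by the functional equation to agree with $M_n^*/n!$, so $\sum_n M_n^*\lambda^n/n!$ has a positive radius of convergence $\rho>0$. Consequently, for $0<\lambda<\rho$,
\[ \sup_{x\in D}\N_x\big(e^{\lambda\langle X^D,1\rangle}-1\big) = \sup_x\sum_{n\ge 1}\frac{\lambda^n M_n(x)}{n!} \le F^*(\lambda) < \infty. \]

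The main point is the recognition that the Palm recursion for $\bar M_n$ is precisely the coefficient-extraction of an analytic functional equation built from $\tilde\psi$; once this identification is made, the implicit function theorem delivers a radius of convergence that is uniform in $n$, which is what prevents the recursively obtained moment bounds from growing too fast.
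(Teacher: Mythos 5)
Your proof is correct, and it follows the paper in its essential opening move: apply Lemma \ref{palmformula} with $\phi\equiv 0$ and $v_i\equiv 1$, use $\mathcal{N}_t\le 1$ (since $a_1\ge 0$) and the uniform bound on $E_x(\tau_D)$ to get the recursive inequality $\bar M_n\le T\sum_{|\beta|\ge2}c_{|\beta|}\prod_{A\in\beta}\bar M_{|A|}$, and note that (A1) is exactly what makes all the $c_m$ finite and $\tilde\psi(w)=\sum_{m\ge2}c_mw^m/m!$ analytic on a neighbourhood of the origin.

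Where you depart from the paper is in the convergence step. The paper works directly with the truncated generating functions $g_N(\lambda)=\sum_{n\le N}c_n\lambda^n/n!$, derives the one-sided inequality $g_N(\lambda)\le\lambda+K\tilde\psi(g_N(\lambda))$ (valid while $g_N(\lambda)<\lambda_0$), and then proves $g_N(\lambda)\le x_0$ uniformly in $N$ for $\lambda\in[0,\lambda_1]$ by a bare-hands comparison: choose $x_0$ small so that a line of slope $1/K$ through $(x_0,\tilde\psi(x_0))$ lies strictly below the graph of $\tilde\psi$ on $[0,x_0)$ (this uses $\tilde\psi(0)=\tilde\psi'(0)=0$ and convexity), set $\lambda_1=x_0-K\tilde\psi(x_0)>0$, and argue by contradiction on the first $\lambda$ where $g_N$ would reach $x_0$. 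You instead dominate $\bar M_n$ by the exact solution $M_n^*$ of the recursion, identify the exponential generating function $F^*$ of $(M_n^*)$ with the unique formal solution of $F^*=\lambda+T\tilde\psi(F^*)$ (this is Fa\`a di Bruno / the exponential formula for set partitions, which needs the blocks of a partition with $|\beta|\ge2$ to all have size $<n$), and then invoke the analytic implicit function theorem at $(0,0)$, where $\partial_wG(0,0)=1-T\tilde\psi'(0)=1\neq0$, to get a positive radius of convergence. Both arguments hinge on the same nondegeneracy $\tilde\psi'(0)=0$; yours packages it as the IFT hypothesis, the paper's as the slope comparison. Your route is more conceptual and automatically delivers a uniform radius; the paper's is more elementary (no IFT, no exponential formula, just monotonicity and a secant-line estimate) and may be easier to push to settings where $\tilde\psi$ is only $C^2$ rather than analytic. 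Both are valid proofs.

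One small caution, not a gap: the paper's line ``$\tilde\psi(u)=\sum_{j\ge2}m_ju^j/j!=\psi(u)-a_1u-a_2u^2$'' contains a sign slip (the right-hand side alternates in sign while the middle expression does not); your closed form $\tilde\psi(w)=a_2w^2+\int_0^\infty(e^{wr}-1-wr)\,\pi(dr)$ is the correct one and matches the $c_m$ you use.
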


\begin{proof}
Let $c_n = \sup_{x \in D} \N_x( \langle X^D,1 \rangle ^n).$  As $a_1 \geq 0$, we have $c_1 \leq  1$ by (c) of Lemma \ref{background}. By Lemma \ref{palmformula}, with $\phi = 0$ we have the following
recursion relation, for $n \geq 2$,
\begin{eqnarray} \label{srec1}
c_n &\leq&  E_x \Big( \sum_{\substack{\beta \in \CP(N)\\|\beta|\ge 2}}
\int_0^\tau \,
b(|\beta|,0, \xi_{t}) \prod_{A \in \beta } c_{|A|}\,dt \Big)\\
&\leq & K \sum_{\substack{\beta \in \CP(N)\\|\beta|\ge 2}}
m_{|\beta|} \prod_{A \in \beta } c_{|A|}\\
&=& K \sum_{j=2}^n \frac{m_j}{j!}  \sum_{\substack{i_1,i_2,\ldots, i_j \geq 1 \\ i_1 + i_2 + \ldots + i_j = n }} \frac{n!}{i_1! i_2! \ldots i_j!} \prod_{i=1}^j c_{i_k}
\end{eqnarray}
where $K >0$, $m_2 = 2a_2 + \int_0^\infty  r^2\,\pi(dr)$ and $m_k = \int_0^\infty  r^k \,\pi(dr)$ for $k \geq 3$. 
 For $N \geq 1$, let $g_N:[0,\infty) \rightarrow [0,\infty)$ given by 
\begin{equation} g_N(\lambda) =  \sum_{n=1}^N \frac{c_n \lambda^n}{n!} , \,\, \lambda >0. \end{equation}
Using \eqref{srec1}, by an inductive argument it is easy to see that $c_n < \infty$ for all $n \geq 2$, so $g_N$ is well defined for all $N \geq 1$. Set $\tilde{\psi} (u ) = \sum_{j=2}^\infty \frac{m_j}{j!} u^j=\psi(u)-a_1u-a_2u^2$ for $0\le u<\lambda_0$. Then using \eqref{srec1} again,  for $\lambda >0$
\begin{eqnarray}\label{inequ}
g_N(\lambda) &\leq& \lambda + K \sum_{n=2}^N  \lambda^n \sum_{j=2}^n \frac{m_j}{j!}  \sum_{\substack{i_1,i_2,\ldots, i_j \geq 1 \\ i_1 + i_2 + \ldots + i_j = n }}  \prod_{i=1}^j \frac{c_{i_k}}{i_k!} \nonumber \\
&\leq& \lambda + K  \sum_{j=2}^N \frac{m_j}{j!}  \sum_{1 \leq i_1,i_2,\ldots, i_j \leq N}  \prod_{i=1}^j \frac{c_{i_k} \lambda^{i_k}}{i_k!} \nonumber \\
&=& \lambda + K  \sum_{j=2}^N \frac{m_j}{j!} \left ( g_N(\lambda) \right )^j \nonumber \\
&\leq& \lambda + K  \tilde{\psi}(g_N(\lambda)), 
\end{eqnarray}
provided $g_N(\lambda)<\lambda_0$. By the assumptions on $\psi$, $\tilde{\psi}$ is infinitely differentiable on $[0,\lambda_0)$ with $\psi''\ge 0$ and $\tilde{\psi} (0) = 0= \tilde{\psi^\prime}(0)$.  
So we may find $x_0<\lambda_0$ sufficiently small that the line through $(x_0,\tilde\psi(x_0))$ with slope $1/K$ is secant to the graph of $g_N$. That is, 
\begin{equation}\label{tineq}
\text{ $\tilde{\psi}(x) > \tilde \psi(x_0)+\frac{x-x_0}{K}$ for $x\in[0,x_0)$.}
\end{equation}
Let $\lambda_1=x_0-K\tilde\psi(x_0)<x_0<\lambda_0$. Since 
$$
0=\tilde\psi(0)>\tilde\psi(x_0)-\frac{x_0}{K}=-\frac{\lambda_1}{K},
$$
we also have $\lambda_1>0$. 

We claim that 
\begin{equation}\label{gNbound}
g_N(\lambda)\le x_0\quad\forall\lambda\in[0,\lambda_1].
\end{equation} 
To see this, observe that $g_N$ is continuous and strictly increasing, with $g_N(0)=0$. So if \eqref{gNbound} fails to hold, there will be a unique $\lambda<\lambda_1$ with $g_N(\lambda)=x_0$. But by \eqref{inequ}, 
$$
x_0=g_N(\lambda)\le \lambda+K\tilde\psi(g_N(\lambda))<\lambda_1+K\tilde\psi(x_0)=x_0,
$$
which is impossible. 

Now just let $N\to\infty$ in \eqref{gNbound} to complete the proof.
\end{proof}
The authors are grateful to Amram Meir, who showed them how to construct this type of argument. For example, see \cite{mm}.

\subsection{Potentials} \label{potential}
We will need certain results concering potentials of specific partial differential equations. These potentials will be used to describe the exit measure conditioned to  hit certain points on the boundary of $D$.

Let $N = \{1,2,\ldots,n \}$ be as before. Then for every non-empty
subset $A\subset N$ let us suppose we are given a solution $u^A>0$ to
the equation $\f{1}{2}{\Delta u} = \psi(u)$ in $D$. For convenience we also set $u^A=0$ for
$A=\emptyset$. Define
\begin{equation} \label{uvrel}
v_A =
\sum_{\substack{N \setminus A \subset B \subset N}}
(-1)^{| A | + | B | + n + 1 } u^B \quad \mbox{ and }
\quad v^A =
\sum_{\substack{ \emptyset \not = B \subset A}}
(-1)^{ | B |  + 1 } u^B. 
\end{equation}
We shall assume that 
\begin{equation}\label{vPositivity}
\text{$v_A \geq 0$ for all $\emptyset \neq A \subset N$.}
\end{equation} 
The example to keep in mind is as follows: for $\Gamma_1,\dots,\Gamma_n 
\subset \partial D$, let  $$u^A(x)=\N_x(\text{$X^D$ charges $\bigcup_{i\in A} \Gamma_i$}),$$ $$ v^A(x)=\N_x(\text{$X^D$ charges $\Gamma_i$ for every $i\in A$  }), \mbox{ and }$$
$$v_A(x)=\N_x(\text{$X^D$ charges $\Gamma_i$ but not $\Gamma_j$, for every 
$i\in A$ and $j\notin A$}).$$ 
\eqref{uvrel} holds in this case, by a simple inclusion-exclusion argument (see also Lemma 5.1 in \cite{sv1} and Section 4 in \cite{sv2}).

\begin{lemma} \label{harmonic}
Assume \eqref{condition} and \eqref{vPositivity}. Then
\begin{enumerate}
\item $\displaystyle u^A = 
\sum_{\substack{B \subset N \\ A \cap B \neq \emptyset }} v_B=\sum_{\emptyset\neq B\subset A}(-1)^{|B|+1}v^B$ and 
$\displaystyle v^A = 
\sum_{A\subset B\subset N} v_B$.
\item For $A\neq\emptyset$, 
\begin{equation} \label{v_A} \displaystyle \f{1}{2} \Delta v_A - \psi'(u^N)v_A = -
\sum_{j=2}^{\infty} \f{b(j,u^N, \cdot)}{j!} 
\sum_{\substack{C_1 \cup C_2 \ldots \cup C_j = A \\ C_i \neq \emptyset}}\, 
\prod_{i=1}^j v_{C_i}.
\end{equation}
\item Assume also (A1). The following then holds at any point where $u^{A}<\lambda_0$:
\begin{equation} \label{v^A}
 \displaystyle \f{1}{2} \Delta v^A -\phi(u^A, v^A) v^A = -\sum_{j=2}^{\infty} \f{(-1)^j \psi^{(j)}(0) }{j!} 
\sum_{\substack{ C_1 \cup C_2 \ldots \cup C_j = A\\ \emptyset \neq C_i \neq A}}\, 
 (-1)^{|A|} \prod_{i=1}^j v^{C_i} (-1)^{|C_i|}
\end{equation}
where $ \phi(u^A,v^A) = \f{\psi(u^A + (-1)^{|A|}v^A) - \psi(u^A)}{(-1)^{|A|} v^A}\ge 0. $
\newline In particular, if $\int_1^\infty e^{r\lambda}\, \pi(dr)<\infty$ for every $\lambda>0$ then \eqref{v^A} holds without restriction. 
\end{enumerate}
\end{lemma}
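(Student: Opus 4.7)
For part (a), I would derive each identity by direct substitution and a single appeal to Lemma~\ref{settheory}. Plugging the definition of $v_B$ into $\sum_{A\subset B\subset N}v_B$, interchanging the order of summation, and invoking Lemma~\ref{settheory} on the resulting inner alternating sum $\sum_{A\cup(N\setminus C)\subset B\subset N}(-1)^{|B|}$ collapses it to $(-1)^n\1_{C\subset A}$, which gives $v^A=\sum_{A\subset B\subset N}v_B$. The identity $u^A=\sum_{A\cap B\neq\emptyset}v_B$ is obtained by the same swap, and the second form $u^A=\sum_{\emptyset\neq B\subset A}(-1)^{|B|+1}v^B$ then follows by substituting $v^B=\sum_{B\subset C\subset N}v_C$ and reapplying Lemma~\ref{settheory}. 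A consequence, using \eqref{vPositivity}, is the monotonicity $u^B\le u^A$ whenever $B\subset A$, which is needed in part (c).

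For part (b), apply $\frac{1}{2}\Delta$ termwise to the defining formula for $v_A$ using $\frac{1}{2}\Delta u^B=\psi(u^B)$, and Taylor-expand each $\psi(u^B)$ about $u^N$:
\[
\frac{1}{2}\Delta v_A=\sum_{j\ge 0}\frac{\psi^{(j)}(u^N)}{j!}\sum_{N\setminus A\subset B\subset N}(-1)^{|A|+|B|+n+1}(u^B-u^N)^j.
\]
Part (a) gives $u^N-u^B=\sum_{\emptyset\neq C\subset N\setminus B}v_C$, so $(u^B-u^N)^j$ expands into multilinear products $\prod_{i=1}^j v_{C_i}$ with $C_i\subset N\setminus B$. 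After interchanging sums, Lemma~\ref{settheory} forces $\bigcup_i C_i=A$ for any non-zero contribution. The $j=0$ term vanishes (since $A\neq\emptyset$), the $j=1$ term isolates $\psi'(u^N)v_A$, and the $j\ge 2$ terms combine with the definition~\eqref{b} of $b(j,u^N,\cdot)$ to produce the right-hand side of \eqref{v_A}.

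Part (c) is the main obstacle. Under (A1), $\psi(\lambda)=\sum_{j\ge 1}\psi^{(j)}(0)\lambda^j/j!$ on $[0,\lambda_0)$, and by the monotonicity from part (a), $u^A<\lambda_0$ implies $u^B<\lambda_0$ for every non-empty $B\subset A$, so the termwise Taylor expansion of $\frac{1}{2}\Delta v^A$ converges absolutely:
\[
\frac{1}{2}\Delta v^A=\sum_{j\ge 1}\frac{\psi^{(j)}(0)}{j!}\sum_{\emptyset\neq B\subset A}(-1)^{|B|+1}(u^B)^j.
\]
Substituting $u^B=\sum_{\emptyset\neq C\subset B}(-1)^{|C|+1}v^C$ from part (a), expanding the $j$-th power, interchanging sums, and applying Lemma~\ref{settheory} rewrites the inner sum as $(-1)^{j+|A|+1}P_j$, where
\[
P_j=\sum_{\substack{C_1,\dots,C_j\subset A,\;C_i\neq\emptyset\\ C_1\cup\dots\cup C_j=A}}\prod_{i=1}^j(-1)^{|C_i|}v^{C_i}.
\]
The decisive step is the decomposition $P_j=R_j+(-1)^j[(u^A)^j-(w^A)^j]$, where $w^A:=u^A+(-1)^{|A|}v^A$ and $R_j$ is precisely the restricted sum ($\emptyset\neq C_i\neq A$) appearing in \eqref{v^A}. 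This is obtained by conditioning on the indices at which $C_i=A$ and recognizing, via part (a), that $\sum_{\emptyset\neq C\subset A}(-1)^{|C|}v^C=-u^A$. Summing on $j$, the $(u^A)^j-(w^A)^j$ contributions reassemble through the power series of $\psi$ into $(-1)^{|A|}[\psi(w^A)-\psi(u^A)]=\phi(u^A,v^A)v^A$, while the $R_j$ contributions produce the stated right-hand side of \eqref{v^A} (the $j=1$ term vanishes automatically, since a non-empty proper subset of $A$ cannot equal $A$). When $\lambda_0=\infty$, the restriction $u^A<\lambda_0$ becomes vacuous.

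The main conceptual hurdle is the decomposition of $P_j$ above: it is what forces the nonlinear coefficient $\phi(u^A,v^A)$ to appear as a difference of $\psi$-values rather than a plain constant, and it explains the peculiar form of $\phi$ in the statement. The remaining technicalities are the absolute convergence of the rearranged double series (handled uniformly by (A1) together with $u^A<\lambda_0$) and the monotonicity $u^B\le u^A$ for $B\subset A$, both of which follow from part (a) and \eqref{vPositivity}.
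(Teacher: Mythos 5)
Your proposal is correct and follows essentially the same route as the paper: part (a) by iterated inclusion--exclusion via Lemma~\ref{settheory}; part (b) by expanding $\psi(u^B)$ around $u^N$ and collapsing with Lemma~\ref{settheory}; part (c) by the key decomposition of $P_j$ into the restricted sum $R_j$ plus the ``diagonal'' contributions $(-1)^j[(u^A)^j-(w^A)^j]$ with $w^A=u^A+(-1)^{|A|}v^A$, which reassemble into $(-1)^{|A|}[\psi(w^A)-\psi(u^A)]=\phi(u^A,v^A)v^A$. The one place where your presentation differs cosmetically is part (b): you Taylor-expand $\psi$ directly about $u^N$, whereas the paper first separates the $a_1,a_2$ terms, rewrites the remainder via the L\'evy integral as $\int e^{-ru^N}\big(e^{r(u^N-u^B)}-r(u^N-u^B)-1\big)\pi(dr)$, and then expands the exponential. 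Under only \eqref{condition} (without (A1)) the Taylor-series convergence of $\psi$ about $u^N$ is not automatic; it does hold here because $u^B\le u^N$ and, after Tonelli, $\sum_{j\ge 2}\frac{|\psi^{(j)}(u^N)|}{j!}(u^N-u^B)^j\le a_2(u^N-u^B)^2+\int e^{-ru^N}\big(e^{r(u^N-u^B)}-1-r(u^N-u^B)\big)\pi(dr)<\infty$, which is precisely the integral the paper works with. So your expansion is legitimate, but it silently depends on this bound, which the paper's route makes explicit. Also note that the $j=1$ term in part (c) is not zero outright: $R_1=0$ indeed vanishes, but the $(u^A)^1-(w^A)^1$ part contributes $a_1v^A$, which is absorbed on the left into $\phi(u^A,v^A)v^A$ --- your account is consistent with this, but worth stating to avoid the impression that the $j=1$ contribution disappears entirely.
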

Note that all terms in the sum from \eqref{v_A} have the same sign, whereas those from \eqref{v^A} vary in sign. The extra conditions in part (c) arise because of the possibility of conditional convergence. Part (b) describes the functions we're primarily interested in, but part (c) will be useful for asymptotics. 

\begin{proof}
The proof of first equality in part (a) is the same as that of (a) of Lemma 4.1 in
\cite{sv2}.  As indicated in Remark 4.2 of \cite{sv2},  the proof of the second and third equality follow similarly.

We will show part (b) first. 

{\bf Proof of (\ref{v_A}) :}
\begin{align*}
\f{1}{2} \Delta v_A 
&=\sum_{\substack{N \setminus A \subset B \subset N
}} (-1)^{| A | + | B | + n + 1 } \f{1}{2} \Delta u^B
\\ &=\sum_{\substack{N \setminus A \subset B \subset N
}} (-1)^{| A | + | B | + n + 1 } \psi(u^B)\\ 
&=\sum_{\substack{N \setminus A \subset B \subset N
}} (-1)^{| A | + | B | + n + 1 } \left (a_1 u^B + a_2
(u^B)^2 + \int_0^\infty  (e^{-r u^B} + r u^B -1) \,\pi (dr)\right ) \\
&= a_1 v_A + 2a_2u^Nv_A - a_2\sum_{ \substack{C \cup C^{\prime} =A \\ C,
C^{\prime} \neq \emptyset }} v_C v_{C^{\prime}} +\\
& \hspace{1.5in}+ \sum_{\substack{N
\setminus A \subset B \subset N}} (-1)^{| A
| + | B | + n + 1 } \int_0^\infty (e^{-r u^B} + r
u^B-1 ) \, \pi (dr)
\end{align*}
In the last line we have used the definition of $v_A$ 
and (b) of Lemma 4.1 of \cite{sv2} (which will be recognized  as the current lemma in the case
$\psi(u) = 2u^2$.) Using the expression for $\psi'(\cdot)$ from
\eqref{psi^(n)}, and that of $v_A$ we get that
\begin{align}
\f{1}{2} \Delta v_A 
&- \psi'(u^N)v_A = - \int_0^\infty
 (-r v_A e^{- u^N r} + rv_A ) \, \pi(dr) - a_2\sum_{
 \substack{C \cup C^{\prime} =A \\ C, C^{\prime} \neq \emptyset }} v_C
 v_{C^{\prime}} +
 \nonumber \\ 
& \hspace{1.5in}+ \sum_{\substack{N \setminus A \subset B \subset N}}
 (-1)^{| A | + | B | + n + 1 }
 \int_0^\infty (e^{-r u^B} + ru^B -1)\, \pi(dr)\nonumber \\ 
&= -a_2\sum_{ \substack{C \cup C^{\prime} =A \\ C, C^{\prime} \neq
 \emptyset }} v_C v_{C^{\prime}} 
+\sum_{\substack{N \setminus A
 \subset B \subset N}} (-1)^{| A | + | B
 | + n + 1 }\int_0^\infty  (e^{-ru^B} + r u^Be^{- r u^N }-1)\,\pi(dr).
 \label{step}
 \end{align}
 Consider the last term of this expression. $A\neq\emptyset$ so by Lemma \ref{settheory},
 $$
 \sum_{\substack{N \setminus A
 \subset B \subset N}} (-1)^{| A | + | B
 | + n + 1 }\int_0^\infty  (1-ru^Ne^{-ru^N} -e^{- r u^N })\,\pi(dr)=0.
 $$
 Therefore
  \begin{align*}
 \sum_{\substack{N \setminus A
 \subset B \subset N}} &(-1)^{| A | + | B
 | + n + 1 }\int_0^\infty  (e^{-ru^B} + r u^Be^{- r u^N }-1)\,\pi(dr)\\
 &=\sum_{\substack{N \setminus A
 \subset B \subset N}} (-1)^{| A | + | B
 | + n + 1 }\int_0^\infty  (e^{-ru^B} + r u^Be^{- r u^N }-ru^Ne^{-ru^N}-e^{-ru^N})\,\pi(dr)\\
&= \sum_{\substack{N
 \setminus A \subset B \subset N}} (-1)^{| A
 | + | B | + n + 1 }\int_0^\infty e^{-ru^N}( e^{r
 (u^N - u^B)} -r(u^N- u^B)-1)\,\pi(dr)\\ 
&= \sum_{\substack{N \setminus A \subset B \subset N\\
}} (-1)^{| A | + | B | + n + 1
 }\int_0^\infty  e^{-ru^N} \sum_{j=2}^{\infty} \f{r^j}{j!}
 (u^N-u^B)^j\pi(dr).
 \end{align*}
By part (a), $\displaystyle u^N-u^B = \sum_{\substack{ \emptyset\neq C \subset N\\ C \cap B = \emptyset}} v_C$. So by monotone convergence we have that the above is 
\begin{eqnarray}
\label{third term}
&=& \sum_{\substack{N \setminus A \subset B \subset N}} (-1)^{| A | + | B | + n + 1 } \sum_{j=2}^{\infty}\int_0^\infty  e^{-ru^N} \f{r^j}{j!}
 \Big(\sum_{\substack{ \emptyset\neq C \subset N\\ C \cap B = \emptyset}} v_C\Big)^j\, \pi(dr)\nonumber\\
&=&
\sum_{j=2}^{\infty}\int_0^\infty  e^{-ru^N} \f{r^j}{j!}
\sum_{\substack{N \setminus A \subset B \subset N\\ B \neq
 \emptyset}} (-1)^{| A | + | B | + n + 1 }
 \Big(\sum_{\substack{ \emptyset\neq C \subset N\\ C \cap B = \emptyset}} v_C\Big)^j\,\pi(dr).
\end{eqnarray}
Using standard multinomial expansions and Lemma \ref{settheory}  we observe that $j$-th summand is, 
\begin{eqnarray*}
&= &\sum_{\substack{N \setminus A \subset B \subset N}} (-1)^{| A | + | B | + n + 1 }
 \Big(\sum_{\substack{ \emptyset\neq C \subset N\\ C \cap B = \emptyset}} v_C\Big)^j\\
&=&\sum_{\substack{N \setminus A \subset B \subset N
}} (-1)^{| A | + | B | + n + 1 }
 \sum_{\substack{C_1, C_2, \dots C_j\\ \emptyset\neq C_i \subset
 N \setminus B} } \prod_{i=1}^j v_{C_i} \\ 
&= &\sum_{\substack{C_1, C_2, \dots C_j\\ \emptyset\neq C_i \subset
 N} }(\prod_{i=1}^j v_{C_i} )\sum_{N \setminus A  \subset B \subset N 
 \setminus \cup_i^j C_i} (-1)^{| A | + | B | + n + 1 } \\ 
&= &\sum_{\substack{C_1, C_2, \dots C_j\\ \emptyset\neq C_i \subset
 N} }(\prod_{i=1}^j v_{C_i}) (-1)^{| A | + n + 1 } (-1)^{n-| A
 |} 1_{ A = \cup_i^j C_i}\\
&=& -\sum_{\substack{C_1 \cup C_2 \ldots \cup
 C_j = A\\C_i \neq \emptyset}}\prod_{i=1}^j v_{C_i}.
 \end{eqnarray*}
Together with \eqref{step} and \eqref{third term} this implies \eqref{v_A}. It is clear from the proof that the sum in \eqref{v_A} converges. 

{\bf Proof of (\ref{v^A}) : }
\begin{eqnarray*} 
\f{1}{2} \Delta v^A 
&=&\sum_{\substack{ \emptyset \not =  B \subset A
}} (-1)^{ | B |  + 1 } \f{1}{2} \Delta u^B \\ 
&=&\sum_{\substack{\emptyset \not = B \subset A
}} (-1)^{| B | + 1 } \psi(u^B).
\end{eqnarray*}
Observe that $u^B\le u^A$ by part (a) and \eqref{vPositivity}. So by \eqref{psi_series}, we can expand $\psi$ in a series, at any point where $u^A<\lambda_0$. Therefore
\begin{eqnarray}\label{inter0}
\f{1}{2} \Delta v^A 
&=&\sum_{\substack{\emptyset \not = B \subset A
}} (-1)^{| B | + 1 } \left (a_1 u^B +   \sum_{j=2}^\infty \f{\psi^{(j)}(0)}{j!}(u^B)^j \right )\nonumber \\
&=&  a_1 v^A + \sum_{j=2}^\infty \f{(-1)^j\psi^{(j)}(0)}{j!}
\sum_{\substack{\emptyset \not \neq B \subset A}} (-1)^{  | B | +  1 }(-u^B)^j.
\end{eqnarray}
For $j \geq 2$,  we have by part (a) that
\begin{eqnarray}
\lefteqn{\sum_{\substack{\emptyset \not \neq B \subset A}} (-1)^{  | B | +  1 }(-u^B)^j} \nonumber \\
&=& 
\sum_{\substack{\emptyset \not \neq B \subset A}} (-1)^{  | B | +  1 }\Big(\sum_{\emptyset \not = C \subset B} v^C (-1)^{|C|} \Big)^j \nonumber \\
&=&   
\sum_{\substack{\emptyset \not \neq B \subset A}} (-1)^{  | B | +  1 } \sum_{ \substack{\emptyset \neq C_1 ,C_2, \ldots  C_j \subset B }} \prod_{i=1}^j v^{C_i} (-1)^{|C_i|}  \nonumber \\
&=& (-1)^{  | A | +  1 } \sum_{ { \emptyset \neq C_1,
C_2, \ldots, C_j \subset A}} \prod_{i=1}^j v^{C_i} (-1)^{|C_i|} 1(A= \cup_{i=1}^j C_i) \nonumber \\
&& (\mbox{using Lemma \ref{settheory} }) \nonumber\\
\label{inter2}
&=& (-1)^{  | A | +  1 } \sum_{ \substack{ A= \cup_{i=1}^j C_i \\ \emptyset \neq C_1,
C_2, \ldots, C_j \neq A}} \prod_{i=1}^j v^{C_i} (-1)^{|C_i|} \nonumber \\ && + (-1)^{  | A | +  1 } \sum_{k=1}^j {j \choose k} {((-1)^{|A|}v^A)}^k \left (\sum_{ { \emptyset \neq C_1,
C_2, \ldots, C_{j-k}  \neq  A}} \prod_{i=1}^{j-k} v^{C_i} (-1)^{|C_i|}\right )  \nonumber\\
&=& (-1)^{  | A | +  1 } \sum_{ \substack{ A= \cup_{i=1}^j C_i \\ \emptyset \neq C_1,
C_2, \ldots, C_j \neq A}} \prod_{i=1}^j v^{C_i} (-1)^{|C_i|}  \nonumber \\ && +  (-1)^{  | A | +  1 } \sum_{k=1}^j {j \choose k} {((-1)^{|A|}v^A)}^k  ( -u^A +-(-1)^{|A|}v^A )^{j-k}\nonumber\\
&=&  (-1)^{  | A | +  1 }\sum_{ \substack{ A= \cup_{i=1}^j C_i \\ \emptyset \neq C_1,
C_2, \ldots, C_j \neq A}} \prod_{i=1}^j v^{C_i} (-1)^{|C_i|} +   (-1)^{  | A | +  1 } (-u^A)^j  - ( -u^A +-(-1)^{|A|}v^A )^{j}.\nonumber\\
&=&  \sum_{\emptyset \neq B \subset A} (-1)^{|B|+1}  \left ( \sum_{\emptyset \neq C \subset B, C \neq B} v^C (-1)^{|C|}  \right )^j +   (-1)^{  | A | +  1 } (-u^A)^j  - ( -u^A +-(-1)^{|A|}v^A )^{j}.\nonumber\\
\end{eqnarray}
Using \eqref{inter0} and \eqref{inter2} we have
\begin{eqnarray*} 
\f{1}{2} \Delta v^A &=&  a_1 v^A +(-1)^{|A| +1 }\sum_{j=2}^\infty \f{(-1)^j\psi^{(j)}(0)}{j!}
  \sum_{ \substack{ A= \cup_{i=1}^j C_i \\ \emptyset \neq C_1,
C_2, \ldots, C_j \neq A}} \prod_{i=1}^j v^{C_i} (-1)^{|C_i|} +     \nonumber \\ &&  +  (-1)^{|A|+1}\sum_{j=2}^\infty \f{(-1)^j\psi^{(j)}(0)}{j!} (-u^A)^j  - ( -u^A +-(-1)^{|A|}v^A )^{j}\\ 
\end{eqnarray*}
\begin{eqnarray*}
&=& a_1 v^A + (-1)^{|A| +1 }\sum_{j=2}^\infty \f{(-1)^j\psi^{(j)}(0)}{j!}
  \sum_{ \substack{ A= \cup_{i=1}^j C_i \\ \emptyset \neq C_1,
C_2, \ldots, C_j \neq A}} \prod_{i=1}^j v^{C_i} (-1)^{|C_i|} +  
\nonumber \\ &&  +  (-1)^{|A|+1}\int_0^\infty (e^{-ru^A} - e^{-r( u^A +(-1)^{|A|}v^A )})\, \pi(dr)\\ 
&=& - (-1)^{|A|}\sum_{j=2}^\infty \f{(-1)^j\psi^{(j)}(0)}{j!}
 (-1)^{  | A | +  1 } \sum_{ \substack{ A= \cup_{i=1}^j C_i \\ \emptyset \neq C_1,
C_2, \ldots, C_j \neq A}} \prod_{i=1}^j v^{C_i} (-1)^{|C_i|} +  
\nonumber \\ &&  +  (-1)^{|A|}( \psi(u^A + (-1)^{|A|}v^A)  - \psi(u^A)) \\ 
\end{eqnarray*}
which gives \eqref{v^A}. Positivity of $\phi$ follows from monotonicity of $\psi'$. 
\end{proof}

\section{The Branching particle description} \label{BrParticleDescrip}
In this section we shall define a martingale change of measure which will represent various 
conditionings of the exit measure of $\psi$ super-Brownian motion. For each such conditioning we shall 
also present a branching backbone representation, in which the conditioned process is realized as an 
unconditioned superprocess with immigration of mass along a branching tree.

Let $\emptyset\neq A \subset N = \{1,2 \ldots, n \}$ and let $u^A\ge 0$ and $v_A\ge 0$ be as in
Section \ref{potential}. Specific examples will be described later, but even at this level of generality we can use these functions to define an associated martingale. Define 
\begin{equation}
\label{martingale}
\breve{M}_k = \exp (-\langle X^k , u^N\rangle ) \sum_{m=1}^\infty \f{1}{m!}
\, \sum_{\substack{ \scriptstyle C_1\cup\ldots \cup C_m =N \\ C_i \neq
\emptyset }} \,\prod_{i=1}^m \langle X^k, v_{C_i}\rangle \ge 0.
\end{equation}

In Lemma 4.3 of \cite{sv2}, it is shown that this sum is finite, and 
$$\breve{M}_k= 
\sum_{A \subset N} (-1)^{| A|} \exp(-\langle X^k, u^A\rangle )
$$
(note that the $A=\emptyset$ term equals 1, by our convention that $u^\emptyset=0$).

From this or otherwise it can be easily checked that 
$\breve{M}_k$ is a martingale under $\N_x$, with respect to the filtration
$\CF_k$.  So for $\Phi$ measurable with respect to
$\CF_k$ let $\breve{\M}_x = \f{1}{v_N(x)} {\N}_x (\Phi \breve{M}_k )$ be
the associated Girsanov transformation -- that is, the  $h$-transform, for the ``harmonic'' function 
$$
h(\mu)=\sum_{A \subset N} (-1)^{| A|} \exp(-\langle \mu, u^A\rangle ).
$$
Dynkin has developed a general framework for such $h$-transforms of superprocesses, which he calls {\sl $X$-harmonic functions}. See \cite{dy041}.

{\bf Branching backbone : }\label{BrBackbone}
We describe the direct construction of  $\breve{\M}_x$ in terms
of a branching particle system tracing out a backbone along which
mass gets created. Let $\breve Lf = L_{\psi'\circ u^N}f=
\f{1}{2} \Delta f - \psi'(u^N)f.$ Essentially we will constructively generate a measure
$\breve{\N}_x$ and then show that it agrees with $\breve{\M}_x$. 
$\breve{\N}_x$ will have a branching backbone $\Upsilon$ equipped
with mass creation/immigration both along the branches and at the nodes. Each branch will be labeled by a nonempty subset $A$ of $N$. The mass created along the backbone will evolve as an unconditioned superprocess but with a modified killing rate and L\'evy measure. The first order of business is to give a more precise description of the various components of this process. 

{\bf Evolution of Mass:} Once mass has been created, it evolves as an unconditioned super-Brownian motion but with a modified branching law. More formally, for $\Phi$ measurable with respect to $\CF_k$,
let $\tilde{\N}_y(\Phi) = \N_y(\Phi e^k_{u^N})$. 
We will show that this measure indeed describes a superprocess.

For any $\lambda>0$
and $y \in D$ define 
$\breve\psi (y,\lambda) = \psi( u^N(y) + \lambda) -
\psi(u^N(y))$.
It is easily checked that 
$$
\breve\psi(y,\lambda)=\breve a_1(y)\lambda+ a_2\lambda^2+\int_0^\infty e^{-u^N(y)}(e^{-\lambda r}-1+\lambda r)\, \pi(dr)
$$
where $\breve a_1(y)=a_1+2a_2u^N(y)+\int_0^\infty r(1-e^{-ru^N(y)})\, \pi(dr)$. This is therefore a spatially varying branching law, of the form \eqref{psi}. But now in places where $u^N$ is large, we will have that the killing rate $\breve a_1$ becomes large too (provided $a_2\neq 0$), and the L\'evy branching measure $\pi(dr)e^{-ru^N}$ becomes small. 

The following is a special case of Dawson's Girsanov formula (see Theorem 7.2.2. in \cite{daw}),

\begin{lemma}\label{Dawson Girsanov} Assume \eqref{condition}. 
Let $\Tilde{\Tilde{\N}}_x$ be the excursion law for 
super-Brownian motion in $D$ with branching mechanism $\breve\psi$.  Then for every
$\phi\ge 0$,
$$
 \tilde\N_x(1-e^{-\langle X^k,\phi\rangle})
 = \Tilde{\Tilde{\N}}_x(1-e^{-\langle X^k,\phi\rangle}).
$$
\end{lemma}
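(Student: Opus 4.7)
The plan is to identify both sides as solutions of the same semilinear PDE, so that uniqueness forces the equality. Using the definition $\tilde\N_y(\Phi)=\N_y(\Phi e^k_{u^N})$, I would first write
$$
\tilde\N_x(1-e^{-\langle X^k,\phi\rangle})=\N_x(1-e^{-\langle X^k,u^N+\phi\rangle})-\N_x(1-e^{-\langle X^k,u^N\rangle}),
$$
both terms on the right being finite. By Lemma \ref{background}(b), the second equals $u^N(x)$, since $\frac{1}{2}\Delta u^N=\psi(u^N)$ on $D\supset D_k$. For the first, the standard log-Laplace characterization of the exit measure (Theorem 4.2.1 of \cite{dy02}) identifies $\N_x(1-e^{-\langle X^k,u^N+\phi\rangle})=W(x)$, where $W\ge 0$ is the unique solution of $\frac{1}{2}\Delta W=\psi(W)$ on $D_k$ with boundary data $W|_{\partial D_k}=u^N+\phi$. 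Hence $\tilde\N_x(1-e^{-\langle X^k,\phi\rangle})=W(x)-u^N(x)$.

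For the right-hand side of the lemma, the analogous log-Laplace formula for the $\breve\psi$-excursion law gives $\Tilde{\Tilde{\N}}_x(1-e^{-\langle X^k,\phi\rangle})=V(x)$, where $V\ge 0$ uniquely solves $\frac{1}{2}\Delta V=\breve\psi(\cdot,V)$ on $D_k$ with $V|_{\partial D_k}=\phi$. Before invoking this I would check that $\breve\psi$ is a valid spatially varying branching mechanism of the form \eqref{psi} satisfying \eqref{condition}: the coefficient $\breve a_1(y)$ is non-negative since $a_1,a_2\ge 0$ and $\int_0^\infty r(1-e^{-ru^N(y)})\,\pi(dr)<\infty$ by \eqref{condition}, and the tilted L\'evy measure $e^{-ru^N(y)}\pi(dr)$ is dominated by $\pi$, so the moment condition $\int_0^\infty\min(r,r^2)e^{-ru^N(y)}\,\pi(dr)<\infty$ is inherited.

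To conclude, I would verify that $W-u^N$ satisfies the PDE characterizing $V$, so that uniqueness forces $V=W-u^N$. The boundary value is immediate: $(W-u^N)|_{\partial D_k}=(u^N+\phi)-u^N=\phi$, while
$$
\frac{1}{2}\Delta(W-u^N)=\psi(W)-\psi(u^N)=\psi\bigl(u^N+(W-u^N)\bigr)-\psi(u^N)=\breve\psi\bigl(\,\cdot\,,W-u^N\bigr)
$$
by the very definition of $\breve\psi$. Non-negativity of $W-u^N$ follows from monotonicity of solutions in the boundary data. The one subtle point is invoking uniqueness for such semilinear PDEs with the spatially varying branching mechanism $\breve\psi$; this is the content of Dawson's Girsanov theorem (see Theorem 7.2.2 of \cite{daw}), and once it is in hand the identity follows.
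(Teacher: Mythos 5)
Your proposal is correct and is essentially the paper's own argument, just run in the mirror direction: the paper starts with the solution $g$ of the $\breve\psi$-PDE, sets $f=g+u^N$, and checks $f$ solves the $\psi$-PDE with boundary data $\phi+u^N$, whereas you start from the decomposition of $\tilde\N_x(1-e^{-\langle X^k,\phi\rangle})$, identify the two pieces via the $\psi$ log-Laplace equation, and then verify that $W-u^N$ solves the $\breve\psi$-PDE. The algebraic identity $\breve\psi(\cdot,\lambda)=\psi(u^N+\lambda)-\psi(u^N)$ is the common core, so the two writeups are equivalent.
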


\begin{proof}
Let $g$ be the solution to
\begin{align*}
\frac12\Delta g(y)&= \breve\psi(y,g(y))\text{ for }y\in D_k \\
 g&=\phi \text{ on } \partial D_k.
\end{align*}
Then $f=g+u^N$ is the solution to  
\begin{align*}
 \frac{1}{2}\Delta f&=\psi(f) \text{ in } D_k \\
 f&=\phi+u^N \text{ on } \partial D_k,
\end{align*}
so 
\begin{align*}
&\tilde\N_x(1-e^{-\langle X^k,\phi\rangle})
 = \N_x\Big((1-e^{-\langle X^k,\phi\rangle})
     e^{-\langle X^k,u^N\rangle}\Big) \\
&\qquad= \N_x(1-e^{-\langle X^k,\phi+u^N\rangle})
    -\N_x(1-e^{-\langle X^k,u^N\rangle}) \\
&\qquad= \Big(g(x)+u^N(x)\Big)-u^N(x)=g(x)
= \Tilde{\Tilde{\N}}_x(1-\exp-\langle X^k,
  \phi\rangle).
\end{align*}
This suffices. 
\end{proof}

{\bf $\Upsilon$ Backbone:}
Under $\breve\N_x$ we begin with one particle in the system that performs a $v_N$
transform of the motion with generator $\breve L$ in $D$. Note that by Lemma \ref{harmonic}, $v_N$  is $\breve L$-superharmonic (as indeed are all the $v_A$). Let $-V_A(\cdot)$ denote the right hand side of (b) of Lemma \ref{harmonic}. 

If the particle dies at some site $y$, a random number of particles are born in its place, and numbered 1, 2, \dots, $j$. The first particle is then assigned a randomly generated tag $A_1$, the second is tagged with $A_2$, etc.  For $j \geq2$, the probability that $j\ge 2$ particles are born and that their tags are a specified sequence $A_1, \dots, A_j$ is given by:

\begin{equation} \label{prob}
\f{1}{V_N(y)\cdot j!}b(j,u^N(y),y)\prod_{i=1}^jv_{A_i}(y).
\end{equation} 
Here the $A_i$ are chosen so that $A_i\neq\emptyset$ and $\cup_{i=1}^j A_i = N$. In full generality, particles need not die before exiting $D$, but in our main example below they will in fact always die in the interior of $D$ (and the $v_A$ will accordingly be $\breve L$-potentials). Note that the above defines a probability measure, by definition of $V_N$. 

Each of the particles so created now evolves as a $v_A$
transform of the motion with generator $\breve L$ (where $A$ is the particle's label), until it dies. Whereupon a new collection of random branches is created, each labeled by an ${A'}_i\neq\emptyset$ with $\cup_i {A'}_i = A$, etc.

It will be convenient to describe this backbone as follows. Let $n_t$ denote the number of particles alive at time $t$. Label them using an index $i$, $1\le i\le n_t$, and for each one let $x_i(s)$, $0\le s\le t$ denote the location of the particle or its ancestor at time $s$. Let $\Upsilon^k_t$ be the measure putting unit mass at each point $x_i(t)$ for which $x_i$ has not exited $D_k$ by time $t$. Then $\Upsilon^k$ represents the backbone killed upon leaving $D_k$, and we recover the whole backbone by letting $k\to\infty$. Without comment we will feel free to refer to $\Upsilon^k$ in terms of the underlying particles, though formally it is still a measure-valued process.

{\bf Immigration at nodes:}
The birth of $j$ particles at $y$ is accompanied by site-dependent creation of mass. Conditional on the location being $y$ and the number of particles being $j\ge 2$, the mass created is a random variable $R_j\ge 0$ whose law $\mu_{j, y}(dr)$ is given for $r>0$ by

$$
\mu_{j,y}(dr) = 
\begin{cases}
\f{r^j e^{-ru^N(y)}\,\pi(dr)}{\int_0^\infty r^j  e^{-ru^N(y)}\,\pi(dr)}, &j\ge 3\\
\f{r^2 e^{-ru^N(y)}\,\pi(dr)}{2a_2+\int_0^\infty r^2  e^{-ru^N(y)}\,\pi(dr)}, &j=2.
\end{cases}
$$
In the case $j=2$, $\mu_{j,y}$ also has an atom at $r=0$ of size 
$2a_2/[2a_2+\int_0^\infty r^2  e^{-ru^N(y)}\,\pi(dr)]$. In other words, if $a_2>0$ then it is possible for
no mass to be created at a branch with $j=2$. 
In any case, all mass created at the node then evolves according to $\tilde \N_y$. 

Temporarily writing $Y_{j,y}^k$ for the exit measure from $D_k$ resulting from such a creation of mass, and $E_{j,y}$ for expectations under this conditional law, we can write $Y^k_{j,y}$ as $\int\nu \,N(d\nu)$ where $N$ is a Poisson random measure with intensity $n(d\nu)=R_j\tilde \N_y(X^k\in d\nu)$. Thus 
\begin{align}
E_{j,y}[e^{-\langle Y^k_{j,y},\phi\rangle}]
&= E_{j,y}[e^{-\int \langle \nu,\phi\rangle\,N(d\nu)}]
= E_{j,y}[e^{-\int(1-e^{-\langle \nu,\phi\rangle})\, n(d\nu)}]\nonumber \\
&= E_{j,y}[e^{-R_j\tilde \N_y(1-e^{-\langle X^k,\phi\rangle})}]
= \int_{[0,\infty)} e^{-r\tilde \N_y(1-e^{-\langle X^k,\phi\rangle})}\,\mu_{j,y}(dr)\label{PPP}\\
&=\frac{b(j,u^N(y)+\tilde \N_y(1-e^{-\langle X^k,\phi\rangle}),y)}{b(j,u^N(y),y)}.\nonumber
\end{align}
Denote this quantity by $M^k(j,\phi,y)$. 

{\bf Immigration along branches:}  

For any $\lambda>0$
and $y \in D$ define 
\begin{align*}
\eta (y,\lambda) &= \psi'( u^N(y) + \lambda) -
\psi'(u^N(y))\\
&=2a_2\lambda+\int_0^\infty re^{-ru^N(y)}(1-e^{-\lambda r})\,\pi(dr).
\end{align*}
Notice that this has the same form as $\psi'$ in \eqref{psi^(n)} but with $a_1=0$ and a spatially varying $\pi_y(dr)=e^{-ru^N(y)}\,\pi(dr)$. 

We create mass along the branches. The mass created to time $t$ forms a spatially dependent L\'evy process, with L\'evy exponent $\eta$. In other words, if $L_t$ is the mass created until time $t$, then
$$
\breve\N_x(e^{-\lambda L_t})=\breve\N_x(e^{-\int_0^t\int \eta(y,\lambda)\Upsilon^k_s(dy)\,ds}).
$$
This mass then evolves according to $\tilde\N_y$. In particular, if $\tau$ denotes the first branch time and $\xi_t$ the position along the first branch to time $t$ then
$$
\breve\N_x(e^{-\lambda L_\tau})=\breve\N_x(e^{-\int_0^\tau\eta(\xi_t,\lambda)\,dt}).
$$
If $Y^k_\text{Br}$ denotes the exit measure of the mass created along this branch till time $\tau$ then a calculation similar to \eqref{PPP} shows that
\begin{align}
\breve\N_x(e^{-\langle Y^k_\text{Br},\phi\rangle})
&=\breve \N_x(e^{-\int_0^\tau \tilde\N_{\xi_t}(1-e^{-\langle X^k,\phi\rangle})\,dL_t})\label{branch}\\
&=\breve \N_x(e^{-\int_0^\tau \eta(\xi_t,\tilde\N_{\xi_t}(1-e^{-\langle X^k,\phi\rangle}))\,dt}).\nonumber
\end{align}
In other words, we obtain an expression similar to that of \eqref{Nt}.

With this, we've finished describing the construction of an exit measure $Y^k$ under a probability measure $\breve \N_x$, and are ready for the following:

\begin{theorem} \label{theorem} Assume conditions \eqref{condition} and \eqref{vPositivity}.  Then
$$
\breve{\M}_x(\exp-\langle X^k, \phi\rangle ) = \breve{\N}_x(\exp-\langle Y^k, \phi\rangle ).
$$
\end{theorem}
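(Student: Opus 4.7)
The plan is to show both sides of the identity equal $V_N^{(\phi)}(x)/v_N(x)$, where $w_B(x):=\N_x(1-e^{-\langle X^k,\phi+u^B\rangle})$ is the log-Laplace functional (for every $B\subset N$, with the convention $u^\emptyset=0$ so that $w_\emptyset$ is simply the log-Laplace functional of $\phi$) and $V_A^{(\phi)}(x):=\sum_{N\setminus A\subset B\subset N}(-1)^{|A|+|B|+n+1}w_B(x)$ is built from the $w_B$'s by the same linear formula \eqref{uvrel} that produces $v_A$ from the $u^B$'s. For the LHS I would expand $\breve M_k=\sum_A(-1)^{|A|}e^{-\langle X^k,u^A\rangle}$, use the cancellation $\sum_A(-1)^{|A|}=0$ (valid for $n\ge 1$) to rewrite $e^{-\langle X^k,\phi\rangle}\breve M_k=-\sum_A(-1)^{|A|}(1-e^{-\langle X^k,\phi+u^A\rangle})$ as a finite sum of $\N_x$-integrable quantities, and integrate term-by-term to obtain $\N_x(e^{-\langle X^k,\phi\rangle}\breve M_k)=V_N^{(\phi)}(x)$.

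To analyse the RHS I would prove the stronger, more inductively amenable identity $v_A(x)\breve\N_x^{(A)}(e^{-\langle Y^k,\phi\rangle})=V_A^{(\phi)}(x)$ for every $\emptyset\neq A\subset N$, where $\breve\N_x^{(A)}$ denotes the backbone law with initial particle tagged $A$; this generality is forced by the recursive nature of the backbone, since each subtree rooted at a child of the initial particle is itself a $\breve\N^{(A_i)}$-process. The key is to show that both $V_A^{(\phi)}$ and $F_A:=v_A\cdot\breve\N_x^{(A)}(e^{-\langle Y^k,\phi\rangle})$ satisfy the same semilinear elliptic system on $D_k$ with the same Dirichlet data. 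Since each $w_B$ solves $\tfrac12\Delta w_B=\psi(w_B)$ in $D_k$ by the log-Laplace equation for $\N_x$, the combinatorial derivation in the proof of Lemma \ref{harmonic}(b) transfers mutatis mutandis with $u^B\mapsto w_B$ (the key identity $w_N-w_B=\sum_{\emptyset\neq C\subset N\setminus B}V_C^{(\phi)}$ still being valid, by a direct Möbius-inversion check), yielding
\[
\tfrac12\Delta V_A^{(\phi)}-\psi'(w_N)V_A^{(\phi)}=-\sum_{j\ge 2}\frac{b(j,w_N,\cdot)}{j!}\sum_{\substack{C_1\cup\dots\cup C_j=A\\ C_i\neq\emptyset}}\prod_i V_{C_i}^{(\phi)},
\]
with boundary data $V_A^{(\phi)}|_{\partial D_k}=v_A$ (the $\phi$-contribution cancels by Lemma \ref{settheory} since $A\neq\emptyset$). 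On the backbone side, conditioning $F_A$ on the first branching time $\tau$ and location of the initial particle, and using independence of the resulting subtrees together with the Laplace characterisations \eqref{branch} and \eqref{PPP} of branch- and node-immigrated mass, gives a renewal-type integral equation for $F_A$. Rewriting the $v_A$-transform by means of $L_g v_A=-V_A$ (from Lemma \ref{harmonic}(b)) and the representation \eqref{u process}, the Feynman-Kac/Dynkin formula shows $F_A$ satisfies the identical PDE, provided one uses the algebraic cancellation $\psi'(u^N)+\eta(\cdot,w_N-u^N)=\psi'(w_N)$, which is immediate from the definition of $\eta$. The boundary value $F_A|_{\partial D_k}=v_A$ holds because a particle initialised on $\partial D_k$ exits immediately, producing $Y^k=0$.

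I would close the argument by induction on $|A|$. For the base case $|A|=1$, say $A=\{i\}$, the only admissible cover of $\{i\}$ by nonempty subsets has every $C_l=\{i\}$, so the RHS of the PDE collapses to $\sum_{j\ge 2}\tfrac{b(j,w_N)}{j!}F_{\{i\}}^j$; Taylor expansion of $\psi$ about $w_N$ converts this to $\tfrac12\Delta(w_N-F_{\{i\}})=\psi(w_N-F_{\{i\}})$, and uniqueness of the log-Laplace equation with matching boundary data against $w_N-V_{\{i\}}^{(\phi)}$ forces $F_{\{i\}}=V_{\{i\}}^{(\phi)}$. For $|A|\ge 2$, the inductive hypothesis eliminates all terms involving $C_l\subsetneq A$ from both PDEs; the remaining contributions (covers having at least one $C_l=A$) admit the same Taylor-summation reduction, leading to a scalar linear elliptic equation for the difference $E_A:=F_A-V_A^{(\phi)}$ with zero boundary data. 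The main technical obstacle is twofold: one must first verify the positivity $V_A^{(\phi)}\ge 0$ to justify the monotone-convergence step in the transfer of Lemma \ref{harmonic}(b), and then establish uniqueness of the resulting scalar linear equation despite potentially signed coefficients. I expect to handle both by running a positivity induction in parallel with the main one, bootstrapping from $V_N^{(\phi)}=v_N\breve\M_x(e^{-\langle X^k,\phi\rangle})\ge 0$, and by invoking the classical comparison principles available for the log-Laplace equation.
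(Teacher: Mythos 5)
Your proposal takes a genuinely different route from the paper, which performs a direct induction on the number of particles in the backbone tree exiting $D_k$, using the recursive Palm formula (Lemma \ref{palmformula}) to show that $\breve\N_x(\exp-\langle Y^k,\phi\rangle;\Upsilon^k\approx(C_1,\dots,C_m))=\frac{1}{m!v_A(x)}\N_x(e^k_{\phi+u^N}\prod_i\langle X^k,v_{C_i}\rangle)$. You replace this probabilistic bookkeeping by a PDE characterization: show both sides satisfy the same semilinear elliptic system on $D_k$ with matching boundary data and invoke uniqueness. The gain is conceptual economy (no moment computations), while the paper's route buys an explicit identification of the law of each finite tree-pattern, which is then reused in Sections \ref{limitBackbone}--\ref{explosion}. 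The LHS computation is correct: writing $w_B=\N_\cdot(1-e^{-\langle X^k,\phi+u^B\rangle})$, the alternating sum does collapse to $V_N^{(\phi)}$ as you say, and the ``$\phi$-contribution cancels'' observation on $\partial D_k$ is right, by Lemma \ref{settheory} with $A\neq\emptyset$. The key algebraic identity $\psi'(u^N)+\eta(\cdot,w_N-u^N)=\psi'(w_N)$ is also exactly what the paper uses to collapse the Feynman--Kac exponentials into $\CN_t(e^k_{\phi+u^N})$.

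Two places are genuinely underdeveloped. First, the passage from the backbone construction to the semilinear system for $F_A$ is not free: it requires the same first-passage/renewal manipulation that the paper carries out (conditioning on $\zeta$ and $\xi_\zeta$, matching the immigration exponent \eqref{branch} and the node factor \eqref{PPP} against the killing rate $\psi'\circ u^N$), and this is really the bulk of the paper's proof; asserting ``the Feynman--Kac/Dynkin formula shows $F_A$ satisfies the identical PDE'' hides that work rather than avoiding it. Second, the uniqueness step needs a sharper argument than ``comparison principles for the log-Laplace equation.'' After feeding the inductive hypothesis into the system, the equation for $E_A=F_A-V_A^{(\phi)}$ has the form $\tfrac12\Delta E_A=\psi(w_N-\bar S-V_A^{(\phi)})-\psi(w_N-\bar S-F_A)$ with $\bar S=\sum_{\emptyset\neq C\subsetneq A}V_C^{(\phi)}$; while monotonicity of $\psi$ gives a linear equation with a nonnegative coefficient \emph{provided} one already knows both $w_N-\bar S-F_A\ge0$ and $w_N-\bar S-V_A^{(\phi)}\ge0$, the first of these is essentially what you are trying to prove. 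A cleaner way to close the induction, consistent with your framework, is to sum the system: setting $S(A)=\sum_{\emptyset\neq C\subset A}F_C$, one shows via Lemma \ref{settheory} that $\tfrac12\Delta\bigl(w_N-S(A)\bigr)=\psi\bigl(w_N-S(A)\bigr)$ with boundary data $\phi+u^{N\setminus A}$; since $0\le S(A)\le\sum_{\emptyset\neq C\subset A}v_C\le u^N\le w_N$, uniqueness for nonnegative solutions of the scalar log-Laplace equation gives $w_N-S(A)=w_{N\setminus A}$, and M\"obius inversion recovers $F_A=V_A^{(\phi)}$. Finally, the positivity $V_A^{(\phi)}\ge0$ is true but deserves a word of caution: the naive analogy with $v_A$ does not quite apply because $w_\emptyset\neq0$ while $u^\emptyset=0$; one should instead write $V_A^{(\phi)}=\N_x\bigl(e^{-\langle X^k,\phi\rangle}\,\N(\text{hit all }A,\ \text{miss all }N\setminus A\mid\mathcal F_k)\bigr)$ by conditioning on $\mathcal F_k$, which is manifestly nonnegative.
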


Note that this shows that, under the two measures $\breve{\M}_x$ and $\breve{\N}_x$, the exit measures from each $D_k$ have distributions which agree. Using historical processes (as in \cite{sv1}) one can show that the same result carries over to the process level. That is, that the laws of the full superprocesses agree under these two measures. 

\begin{proof}

In the present context, it is useful to label all the particles of
$\Upsilon^k$ that exit $D_k$, by placing an order on them.
So let $F_k$ be the set of such particles, and set $\gamma_k=|F_k|$.
For $A\subset N$, let 
\begin{equation*}
\mathcal{S}_m(A)=\{(C_1,\dots,C_m) : C_1\cup\dots\cup C_m=A, 
\emptyset\neq C_i\;\forall i\}. 
\end{equation*}
If $\gamma_k=m$, choose at random an ordering of $F_k$,
and for
$\Lambda=(C_1,\dots,C_m)\in\mathcal{S}_m(N)$, write 
$\Upsilon^k\approx\Lambda$ for the event that the $i$th particle is 
tagged with the set $C_i$, $i=1,\dots,m$. Thus for example,
\begin{equation}
\label{labelling}
 \breve\N_x(\gamma_k=m)
 =\sum_{\Lambda\in\mathcal{S}_m(N)} 
\breve\N_x(\Upsilon^k\approx \Lambda).
\end{equation}

Note that if $M_1,\dots,M_j$ are disjoint ordered sets, with $|M_i|=k_i$ and $m=\sum k_i$ 
then there are $\frac{m!}{k_1!k_2!\dots k_j!}$ orderings of $S=\bigcup M_i$ which 
are compatible with the given orders on each $M_i$. In other words, if $\sigma$ is any
order on $S$, and if $\Sigma$ is an order on $S$ picked at random, then
the conditional probability
\begin{equation}
\label{random partition}
P(\Sigma=\sigma\mid\Sigma_{M_i}=\sigma_{M_i}, 
i=1,\dots,j)=\frac{k_1!k_2!\dots k_j!}{m!}
\end{equation}
(writing $\sigma_M$ etc\dots for the restriction of $\sigma$ to $M$).

As described initially, the root particle of the tree is always a
$v^N$-particle. It is convenient, for purposes of induction, to allow the
same notation to cover the situation that we start with our root
being a $v^A$-particle for some $A\subset N$. In this case, 
\eqref{labelling} still holds, but with 
$\Lambda\in\mathcal{S}_m(N)$ replaced by $\Lambda\in\mathcal{S}_m(A)$.
With this in mind, we may define another restriction operation as follows. For
$1\le i_1<\dots<i_k\le m$, set 
\begin{equation*}
(C_1,\dots,C_m)|_{\{i_1,\dots,i_k\}}=(C_{i_1},\dots,C_{i_k}).
\end{equation*}
Thus, if $\Lambda=(C_1.\dots,C_m)\in\mathcal{S}_m(A)$ and
$M\subset\{1,\dots,m\}$, we will have that $\Lambda|_M\in\mathcal{S}_m(B)$,
for $B=\cup_{i\in M} C_i$. As a shorthand for the latter, we write 
$\Lambda(M)=\cup_{i\in M} C_i$.

The case of interest is that the first branch of $\Upsilon^k$ partitions $\{1,\dots,m\}$ 
via the descent relation. If $\beta$ is this partition then there are $|\beta|$ particles
born at this branch, and 
\begin{equation}\label{beta factorial}
|\beta|! \text{  ways of tagging the 1st particle, 2nd particle, etc.}
\end{equation}
with distinct elements of $\beta$. 

We will show, by induction on $m\ge 1$, that for $\emptyset\neq
A\subset N$, and $(C_1,\dots,C_m)\in\mathcal{S}_m(A)$,
\begin{align}
 \label{induction}
 \breve\N_x(\exp - \langle Y^{k}, \phi \rangle>
 &\Upsilon^k\approx(C_1,\dots,C_m)) \\
 &=\frac{1}{m!v_A(x)}\N_x(e^k_{\phi+u^N}\prod_{i=1}^m \langle
 X^k,v_{C_i}\rangle).\nonumber
\end{align}
Taking $A=N$ and summing over $\mathcal{S}_m(N)$ will then establish the
theorem.

We start with the case $m=1$. 
Here $ \Upsilon^k$ will have a single $v_{N}$- process with lifetime $\zeta\ge \tau_{k}$. Therefore,
\begin{align}
 \label{induction1}
& \breve\N_x(\exp -\langle Y^{k}, \phi \rangle; 
 \Upsilon^k\approx N) \nonumber\\
 &= E_x^{\psi'\circ u^N,v_N} \left [ 1_{\zeta < \tau_k} e^{-\int_0^\zeta
\, \eta(\xi_s,\tilde{\N}_{\xi_s}(1- e^k_{\phi}))\,ds} \right ] \nonumber\\
 &= \frac{1}{v_N(x)} E_x^{\psi'\circ u^N} \left [ v_{N}(\xi_{\tau_{k}}) 1_{\zeta < \tau_k} e^{-\int_0^\zeta
\, \eta(\xi_s,\tilde{\N}_{\xi_s}(1- e^k_{\phi}))\,ds} \right ] \nonumber\\
 &= \frac{1}{v_N(x)} E_x \left [ v_{N}(\xi_{\tau_{k}})  e^{-\int_0^{\tau_{k}}
\, \psi'\circ u^N(\xi_{s})\,ds} e^{-\int_0^{\tau_{k}} \, \eta(\xi_s,\tilde{\N}_{\xi_s}(1- e^k_{\phi}))\,ds} \right ] \nonumber\\
 &= \frac{1}{v_N(x)} E_x \left [ v_{N}(\xi_{\tau_{k}})  \CN_{\tau_{k}}(e^k_{\phi + u^N} )  \right] \nonumber\\
 &=\frac{1}{v_N(x)}\N_x(e^k_{\phi+u^N} \langle
 X^k,v_{N}\rangle),\nonumber
\end{align}
where the second last equality follows from definition of $\eta$ and $\CN$, and the last equality follows from Lemma \ref{background} (c).

Turning to the inductive step, let $m>1$ and
assume the inductive hypothesis for all $A\subset N$, and for all values
smaller than $m$. For simplicity, we will
verify~\eqref{induction} in the case $A=N$. For $\zeta$ the
lifetime of the initial particle, and $\Lambda=(C_1,\dots,C_m)$, we have by
\eqref{branch}, \eqref{PPP}, \eqref{beta factorial}, \eqref{prob}, 
\eqref{random partition}, and the definition of $\Upsilon$,
that
\begin{align*} 
&\breve{\N}_x(e^{-\langle Y^k,\phi\rangle }, \Upsilon^k
\approx \Lambda)\\ 
&\hspace{0.5in}= \sum_{\substack{\beta \in \CP(\{1,\ldots m\})\\ |\beta|\ge 2}}
E_x^{\psi'\circ u^N,v_N} \left [ 1_{\zeta < \tau_k} e^{-\int_0^\zeta
\, \eta(\xi_s,\tilde{\N}_{\xi_s}(1- e^k_{\phi}))\,ds} \times \right .\\ 
& \hspace{1.5in}\left.  \times M(| \beta |, \phi, \xi_\zeta) \times | \beta | ! \times \f{
b(|\beta|,u^N, \xi_{\zeta}) \prod_{A \in \beta }
v_{\Lambda(A)}(\xi_\zeta)} {V_N| \beta | !}\times\right.\\
& \left . \hspace{1.5in} \times \f{ \prod_{A \in \beta } | A| !}{m!}
\times\prod_{A \in \beta } \breve{\N}_{\xi_\zeta}(
e^{-\langle Y^k, \phi \rangle }; \Upsilon^k \approx \Lambda |_A ) \right ].
\end{align*}

By \eqref{u process} and \eqref{killed process} this equals
\begin{align*}
&\f{1}{m!  v_N(x)} E_x^{\psi'\circ u^N} \left ( 
\sum_{\substack{\beta \in \CP(\{1,\ldots m\})\\ |\beta|\ge 2}}
\int_0^{\tau_k} \,e^{-\int_0^t 
\eta(\xi_s,\tilde{\N}_{\xi_s}(1- e^k_{\phi}))\,ds} \times \right .  \\
&\hspace{1.5in}
\left.\times b(|\beta|,\phi + u^N, \xi_{t}) 
\times\prod_{A \in \beta } | A| !
v_{\Lambda(A)}(\xi_t) \breve{\N}_{\xi_t}( e^{-\langle Y^k, \phi \rangle };
\Upsilon^k \approx \Lambda |_A ) \right)\,dt\\
&\qquad=
\f{1}{m! v_N(x)} E_x \left ( \sum_{\substack{\beta \in \CP(\{1,\ldots m\})\\ |\beta|\ge 2}}
\int_0^{\tau_k} \, e^{-\int_0^t [
\psi'(u^N(\xi_s)) + \eta(\xi_s,\tilde{\N}_{\xi_s}(1-
e^k_{\phi}))]\,ds} \right . \times \\
&\hspace{1.5in}
\left.\times b(|\beta|,\phi + u^N, \xi_{t}) \prod_{A \in \beta } | A| !
v_{\Lambda(A)}(\xi_t) \breve{\N}_{\xi_t}( e^{-\langle Y^k, \phi \rangle };
\Upsilon^k \approx \Lambda |_A ) \right )\,dt.
\end{align*}

By definition of $\eta$ this is
\begin{multline*}
\f{1}{m! v_N(x)}
E_x \left ( \sum_{\substack{\beta \in \CP(\{1,\ldots m\})\\ |\beta|\ge 2}}
\int_0^{\tau_k}
\,\CN_t(e^k_{\phi + u^N} ) b(|\beta|,\phi + u^N, \xi_{t}) \right
. \times \\ 
\left.\times\prod_{A \in \beta } | A| !
v_{\Lambda(A)}(\xi_t) \breve{\N}_{\xi_t}( e^{-\langle Y^k, \phi \rangle };
\Upsilon^k \approx \Lambda |_A ) \right )\,dt,
\end{multline*}
which by induction equals
\begin{multline*}
\f{1}{m!  v_N(x)}
E_x \left ( \sum_{\substack{\beta \in \CP(\{1,\ldots m\})\\ |\beta|\ge 2}}
\int_0^{\tau_k} \,
\CN_t(e^k_{\phi + u^N} ) b(|\beta|, \phi + u^N, \xi_{t})\times
\right .\\ 
\times \left . \prod_{A \in \beta }\N_{\xi_{t}} (e^k_{\phi+ u^N}
\prod_{i \in A} \langle X^k,v_{C_i}\rangle  ) \right )\,dt.
\end{multline*} 
By Lemma \ref{palmformula} we conclude that 
$$
\breve{\N}_x(e^{-\langle Y^k,\phi\rangle }, \Upsilon^k
\approx \Lambda)
=\f{1}{m! v_N(x)}\N_x (
e^k_{\phi + u^N}\prod_{i=1}^m \langle  X^k, v_{C_i}\rangle ).
$$
\end{proof}

\section{Conditioning the exit measure to hit $n$ points}\label{hittingNpoints}

In this section we shall consider the exit measure when it is
conditioned to give positive mass to $n$ small balls on the boundary
of $D$. We shall study the limit as the radius of these small balls
tends to $0$.  In the first part, we set notation. In the second part,  we discuss
the case when $\psi$ is analytic. Here the limit  is given
by a martingale change of measure as in the previous section. Then we
describe an ``explosion'' phenomenon when $n=2$ and $\psi(\lambda) =
\lambda^{1+\beta}$, $0 \leq \beta \leq 1.$ Finally we discuss instablity
in the limiting process by establishing a range of possible limits. We begin by fixing some notation for this section.

\subsection{Conditioning to hit $n$ small balls}\label{hittingNballs}

Let $D$ be a $C^2$-bounded  domain in $\R^d$, for $d \geq 4$, $N =
\{ 1,2,\ldots, n\}$ for $n \in \N$, and let $\{z_1, z_2, \ldots, z_n\}$ be
distinct points on the boundary of $D$. Let $U$ be the potential
operator for Brownian motion killed upon hitting the boundary of $D$. For $x \in D$ let
$K_x^D(\cdot,\cdot)$ be the Martin Kernel on $D$ and let $G_D(\cdot,\cdot)$ be the Green function.  For any $z \in \partial D$ and $\epsilon >0$ we define
$\Delta(z, \epsilon) = B(z,\epsilon) \cap \partial D$ where
$B(z,\epsilon)$ is the Euclidean ball of radius $\epsilon$ in $\R^d$.

For any $A \subset N,$ set $$B^A_\epsilon = \cup_{i \in A} B^i_\epsilon \equiv \cup_{i \in A} B(z_i, \epsilon) 
\mbox{ and } \Delta^A_\epsilon = \cup_{i \in A}  \Delta^{i}_\epsilon \equiv \cup_{i \in A} \Delta(z_i, \epsilon). $$
Fix $0 < \delta_0 < 1$ such that $B(z_i, \delta_0) \cap B(z_j, \delta_0) = \emptyset$ if $i \neq j \in N$ and 
for any $\epsilon >0$, set  
$$D^A_\epsilon = D \backslash \overline{\cup_{i \in A} B(z_i, \epsilon)}
\mbox{ and } \tau_\epsilon  = \tau^A_\epsilon = \tau_{D^A_\epsilon}.$$
For any $x \in D$, define the functions:
\begin{eqnarray*}
 u^A_\epsilon (x) &=&  \N_x \Big(\sum_{i \in A} \langle  X^D, 1_{\Delta(z_i, \epsilon)} \rangle >0  \Big),\\ 
 v^A_\epsilon (x) &=&  \N_x \Big(\prod_{i \in A} \langle X^D, 1_{\Delta(z_i, \epsilon)} \rangle >0  \Big),\\ 
\mbox{and}&&\\
 v_{A,\epsilon}(x) &=& \N_x (\mbox{ $X^D$ charges  all  $\Delta^i_\epsilon$ for $i \in A$ and does not charge  $\Delta^j_\epsilon$ for $j \not \in A$ }).\\ 
\end{eqnarray*}
It is easy to see by an exclusion-inclusion argument that
\begin{eqnarray} \label{uvrel2}
v_{A,\epsilon} &=&
\sum_{\substack{N \setminus A \subset B \subset N}}
(-1)^{| A | + | B | + n + 1 } u^B_\epsilon, \nonumber \\
v^A_\epsilon &=& -\sum_{\substack{ \emptyset \not = B \subset A}}
(-1)^{ | B | } u^B_\epsilon, \nonumber \\
u^A_\epsilon &=& -\sum_{\emptyset \neq B \subset A} (-1)^{|B|} v^B_\epsilon.
\end{eqnarray}
Thus (\ref{uvrel}) and (\ref{vPositivity}) hold. By (a) of Lemma \ref{background}, $\frac12\Delta u^A_\epsilon=\psi(u^A_\epsilon)$. 

We will need the following two Lemmas in section \ref{asymptotics}.

\begin{lemma} \label{basic1}  Let $D$ be a bounded $C^2$ domain in dimension $d\ge 4$. Let $B$ be $d$-dimensional \BM\/ started from $x \in D$, under a probability measure $P_x$.  For $y \in D$ let $m_y^D$ be the harmonic measure starting from $w$. Let $ z \in \partial D$ and  $z_{0} \in \partial D$.   Then
for $x\in D$ fixed, $\exists$ constants $C_k, k = 1,2,3,4$ such that 
\begin{enumerate}

\item $P_y(B_{\tau_{2 \epsilon}} \in \partial B(z, 2 \epsilon) ) \leq  C_1 m_y^D( \Delta(z, \epsilon))$ for $y \in D\setminus B(z,4\epsilon)$,\\

\item $\int_D G_D(x,y) m_y^D(\Delta(z,\epsilon))^2\,dy \leq C_2 \epsilon^2 m_x^D(\Delta(z,\epsilon))$,\\

\item $K^D_x(y,z)\le C_3|y-z|^{-d}\mbox{dist}(y,\partial D)$, and \\

\item $m_x(\Delta(z,\epsilon)\ge C_4\epsilon^{d-1}$. 
\end{enumerate}
\end{lemma}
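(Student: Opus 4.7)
The plan is to prove the four estimates using classical potential-theoretic bounds for bounded $C^2$-domains; the main work lies in (b). Write $P_D(y,z')$ for the Poisson kernel, so $m_y^D(dz')=P_D(y,z')\,\sigma(dz')$ with $\sigma$ surface measure on $\partial D$. Throughout, I would invoke the standard two-sided estimate $P_D(y,z)\asymp\mbox{dist}(y,\partial D)|y-z|^{-d}$ available for $C^{1,1}$ domains.

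Parts (c) and (d) are immediate. For (c), write $K_x^D(y,z)=P_D(y,z)/P_D(x,z)$ and combine the upper bound on $P_D(y,z)$ with the fact that $P_D(x,\cdot)$ is bounded below away from $\partial D$ for fixed interior $x$. For (d), $m_x^D(\Delta(z,\epsilon))=\int_{\Delta(z,\epsilon)}P_D(x,z')\,\sigma(dz')\ge c\,\sigma(\Delta(z,\epsilon))\ge c'\epsilon^{d-1}$, since $P_D(x,\cdot)$ is bounded below on a neighbourhood of $z$ and $\partial D$ being $C^2$ gives the surface-area bound. For (a), by $C^2$-regularity of $\partial D$ near $z$ together with a scaling and half-space comparison, there is a lower bound $c>0$, uniform in small $\epsilon$, for $P_w(B_{\tau_D}\in\Delta(z,\epsilon))$ as $w$ ranges over $\partial B(z,2\epsilon)\cap D$; the strong Markov property applied at $\tau_{2\epsilon}$ then gives $m_y^D(\Delta(z,\epsilon))\ge c\,P_y(B_{\tau_{2\epsilon}}\in\partial B(z,2\epsilon))$, which rearranges to (a).

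For (b), set $h(y)=m_y^D(\Delta(z,\epsilon))$ and split the integral at $\partial B(z,2\epsilon)$. On the near region $y\in B(z,2\epsilon)\cap D$ use $h\le 1$ together with $G_D(x,y)\le C\,\mbox{dist}(y,\partial D)$ (valid since $y$ lies in a tubular neighbourhood of $\partial D$ and is well separated from the fixed $x$); integration in tubular coordinates gives a contribution of order $C\epsilon^{d-1}\int_0^{2\epsilon}s\,ds=O(\epsilon^{d+1})$. On the far region $y\in D\setminus B(z,2\epsilon)$, part (c) yields $h(y)\le C'\epsilon^{d-1}\mbox{dist}(y,\partial D)|y-z|^{-d}$. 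The contribution from a small ball $B(x,r_0)$ (on which $G_D$ has the singular bound $|x-y|^{2-d}$ and $h(y)=O(\epsilon^{d-1})$) is $O(\epsilon^{2(d-1)})$, absorbed into $\epsilon^{d+1}$ since $d\ge 4$. On the complement use $G_D(x,y)\le C\,\mbox{dist}(y,\partial D)$ again and decompose into dyadic annuli $A_k=\{2^k\epsilon<|y-z|\le 2^{k+1}\epsilon\}$; the tubular-volume bound $|\{y\in A_k:\mbox{dist}(y,\partial D)\le s\}|\le C(2^k\epsilon)^{d-1}s$ gives $\int_{A_k}\mbox{dist}(y,\partial D)^3|y-z|^{-2d}\,dy\le C(2^k\epsilon)^{3-d}$, and the geometric series converges for $d\ge 4$ to $O(\epsilon^{3-d})$; multiplying by $\epsilon^{2(d-1)}$ recovers $O(\epsilon^{d+1})$. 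Combined with (d), $\int G_D(x,y)h(y)^2\,dy\le C\epsilon^{d+1}\le C'\epsilon^2\,m_x^D(\Delta(z,\epsilon))$, proving (b).

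The main obstacle is retaining both the $\mbox{dist}(y,\partial D)$ factor in the Poisson-kernel bound for $h$ and a $\mbox{dist}(y,\partial D)$ factor in $G_D$, so that a combined weight of $\mbox{dist}(y,\partial D)^3$ appears in the far-region integrand; the cruder estimate $\mbox{dist}(y,\partial D)\le|y-z|$ yields a divergent dyadic sum in low dimensions, which is precisely why the hypothesis $d\ge 4$ is needed.
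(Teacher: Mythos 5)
Parts (c) and (d) are fine: combining $P_D(y,z)\asymp\operatorname{dist}(y,\partial D)|y-z|^{-d}$ (valid on bounded $C^{1,1}$, hence $C^2$, domains) with the interior positivity of $P_D(x,\cdot)$ gives (c), and integrating the lower bound of $P_D(x,\cdot)$ over $\Delta(z,\epsilon)$ gives (d). Your accounting for (b) also closes: the near-region tube contributes $O(\epsilon^{d+1})$, the small ball around $x$ contributes $O(\epsilon^{2(d-1)})$, and the dyadic annuli give $\epsilon^{2(d-1)}\sum_k(2^k\epsilon)^{3-d}=O(\epsilon^{d+1})$ (convergent because $d\ge4$); comparing with $m^D_x(\Delta(z,\epsilon))\gtrsim\epsilon^{d-1}$ gives (b). This is a genuinely different route from the paper, which simply cites Lemma~5.4 of Salisbury--Verzani and Lemma~2.1 of Abraham--Le~Gall rather than reproving the estimates; your reconstruction of (b)--(d) is therefore useful added value.

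However, your proof of (a) has a genuine gap. You assert a uniform lower bound $P_w(B_{\tau_D}\in\Delta(z,\epsilon))\ge c>0$ for all $w\in\partial B(z,2\epsilon)\cap D$, but this is false: as $w$ approaches the ``corner'' circle $\partial B(z,2\epsilon)\cap\partial D$, the half-space Poisson-kernel estimate gives $P_w(B_{\tau_D}\in\Delta(z,\epsilon))\asymp\operatorname{dist}(w,\partial D)\cdot\epsilon^{d-1}\cdot(2\epsilon)^{-d}\asymp\operatorname{dist}(w,\partial D)/\epsilon$, which tends to $0$. The strong Markov argument cannot therefore be closed pointwise, because the exit distribution on $\partial B(z,2\epsilon)$ can put substantial mass near that corner. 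The standard fix --- and the one implicit in the sources the paper cites --- is to observe that both $y\mapsto P_y(B_{\tau_{2\epsilon}}\in\partial B(z,2\epsilon))$ and $y\mapsto m^D_y(\Delta(z,\epsilon))$ are positive harmonic on $\Omega_\epsilon:=D\setminus\overline{B(z,2\epsilon)}$ and vanish continuously on $\partial D\cap\partial\Omega_\epsilon$, and then apply the boundary Harnack principle for $C^{1,1}$ (or Lipschitz) domains on $\Omega_\epsilon$ to bound their ratio on $D\setminus B(z,4\epsilon)$ by its value at one reference point, together with a scaling/Harnack-chain argument to make the resulting constant independent of $\epsilon$. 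You should replace the pointwise lower-bound step by this BHP comparison.
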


\begin{proof} This uses the proof of Lemma 5.4 in \cite{sv1}, which in turn uses an argument from \cite{al}.  More specifically,  (a) follows from (5.6) in \cite{sv1} and  (b) follows from (5.8) in  \cite{sv1}. We note that in \cite{sv1} the domain $D$ is assumed to be bounded Lipschitz domain which includes the class of $C^2$-bounded domains. Part (c) follows from Lemma 2.1 in \cite{al}. Part (d) is just the fact that the density of harmonic measure with respect to surface area is bounded away from 0. 
\end{proof}

Set $\rho=(d-3)/(d-1)$, and note that $0<\rho<1$ since $d\ge 4$. 

\begin{lemma} \label{thetaest} Let $D$ be a bounded $C^2$ domain in
dimension $d\ge 4$, $A \subset N=\{1, \dots, n\}$, and $\lambda >0$. Then
there exists $
\theta>0$ and $\epsilon_0>0$,    such that
whenever $0<\epsilon<\epsilon_0$ and
  $y \in D^{A}_{ \theta \epsilon^{\rho}} $, $\forall \,\,\, i \in A $ then
   \begin{eqnarray*}
 & v^{i}_{\epsilon}(y) \leq \lambda.
\end{eqnarray*}
\end{lemma}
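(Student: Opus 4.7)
The plan is to show that on the subdomain $D_\epsilon:=D\setminus\overline{B(z_i,2\epsilon)}$ the function $v^i_\epsilon=u^{\{i\}}_\epsilon$ is subharmonic (since $\psi(\cdot)\ge 0$ on $[0,\infty)$ and $\tfrac12\Delta v^i_\epsilon=\psi(v^i_\epsilon)$ by Lemma~\ref{background}(a)), and to dominate $v^i_\epsilon(y)$ by its supremum on the inner sphere times the hitting probability of that sphere. Since $v^i_\epsilon$ vanishes continuously on $\partial D\setminus\overline{\Delta^i_\epsilon}$, the maximum principle applied in $D_\epsilon$ gives
\[
  v^i_\epsilon(y)\;\le\; M_\epsilon\cdot P_y\bigl(B_{\tau_{D_\epsilon}}\in\partial B(z_i,2\epsilon)\bigr),\qquad M_\epsilon:=\sup_{w\in\partial B(z_i,2\epsilon)\cap\overline D}v^i_\epsilon(w).
\]
The first factor I would bound by a Keller--Osserman style argument. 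Since $\psi''(0)=2a_2+\int r^2\,\pi(dr)>0$ in every non-degenerate case (otherwise $\psi$ is linear and $v^i_\epsilon\equiv 0$), one has $\psi(\lambda)\ge c\lambda^2$ for all $\lambda\le\lambda^*$, while $\psi(\lambda)\ge a_1\lambda$ always. Using this lower bound and a standard supersolution of $\frac12\Delta w=cw^2$ with infinite boundary data on $\Delta^i_\epsilon$, a Keller--Osserman interior estimate of order $\mathrm{dist}(\cdot,\Delta^i_\epsilon)^{-2}$ yields $M_\epsilon\le C\epsilon^{-2}$.

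For the second factor I use Lemma~\ref{basic1}. Because $\rho<1$ one has $\theta\epsilon^\rho\ge 4\epsilon$ for all $\epsilon<\epsilon_0(\theta)$, so $y\in D^A_{\theta\epsilon^\rho}$ puts us in the regime of part~(a): $P_y(B_{\tau_{D_\epsilon}}\in\partial B(z_i,2\epsilon))\le C_1\,m_y^D(\Delta^i_\epsilon)$. Writing $m_y^D(\Delta^i_\epsilon)$ against surface measure and estimating $|y-w|\ge|y-z_i|/2$ (valid for $w\in\Delta^i_\epsilon$ and $|y-z_i|\ge 2\epsilon$) together with $\mathrm{dist}(y,\partial D)\le|y-z_i|$ and part~(c) of the lemma, I get $m_y^D(\Delta^i_\epsilon)\le C\epsilon^{d-1}|y-z_i|^{-(d-1)}$. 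Combining the two factors and using $\rho(d-1)=d-3$,
\[
  v^i_\epsilon(y)\;\le\; C\epsilon^{-2}\cdot\epsilon^{d-1}|y-z_i|^{-(d-1)}\;\le\;C(\theta\epsilon^\rho)^{-(d-1)}\epsilon^{d-3}\;=\;C\theta^{-(d-1)},
\]
and choosing $\theta\ge(C/\lambda)^{1/(d-1)}$ finishes the proof.

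The main obstacle is the $L^\infty$-bound $M_\epsilon=O(\epsilon^{-2})$ in Step~2. In the purely quadratic case $\psi(\lambda)=c\lambda^2$ this is immediate by scaling, but for a general $\psi$ satisfying \eqref{condition} one must be careful that the Keller--Osserman barrier actually dominates $v^i_\epsilon$ near the singular boundary patch $\Delta^i_\epsilon$. I would handle this by a comparison argument against the singular solution of the dominant quadratic equation on a thin half-collar of $\Delta^i_\epsilon$, combined with Dawson's Girsanov formula to absorb the drift term $a_1$. Everything else (the subharmonicity, the harmonic measure bound, and the geometric identity $\rho(d-1)=d-3$) is standard, and together with the $L^\infty$ bound produces precisely the $\epsilon^\rho$-radius exclusion zone stated in the lemma.
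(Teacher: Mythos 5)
Your proof follows essentially the same route as the paper's: both reduce to bounding $v^i_\epsilon$ on $\partial B(z_i,2\epsilon)$ by $O(\epsilon^{-2})$, bound the hitting probability of that sphere by harmonic measure of $\Delta(z_i,\epsilon)$ via Abraham--Le~Gall style estimates, and then exploit the identity $\rho(d-1)=d-3$ so that the powers of $\epsilon$ cancel and only a negative power of $\theta$ survives. The one place you proceed differently is the $M_\epsilon=O(\epsilon^{-2})$ bound, which you flag as the main obstacle and propose to derive from scratch via a Keller--Osserman barrier; the paper instead simply invokes the comparison principle (8.2.H in Dynkin \cite{dy02}) together with the singular-solution estimate (2.2.10) in V\'eron \cite{ver} to obtain $v_\epsilon(x)\le c_1\,\mathrm{dist}(x,\partial D)^{-2}$ directly, which is exactly the bound your Keller--Osserman argument is aiming at, so your instinct about where the technical content lies is correct.
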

\proof   (a) By the maximum principle, it is enough to prove the lemma for
 $y \in D^{A}_{\theta \epsilon^{\rho}} $ and $\mbox{dist}(y, \partial D)
\leq \theta \epsilon^{\rho}.$

Fix an $i \in A$, and denote $v^{i}_{\epsilon}$ by
$v_{\epsilon}$ and $z_{i}$ by $z$.  Let  $\theta >2 ,  \epsilon_{0} <
\min{(\delta_0, 1)}, 0 < \epsilon < \epsilon_{0}$. Using the comparison
principle
(see
8.2.H in \cite{dy02}), and  (2.2.10) in \cite{ver} there exists $c_1
>0$
such that \begin{equation}\label{distest} v_\epsilon(x) \leq c_1
\mbox{
dist}(x, \partial D)^{-2}, \forall x \in D.\end{equation}
Now using the Feynman-Kac formula we have,
\begin{eqnarray}
  v_\epsilon(y)  &= & E_y \left (v_\epsilon(B(\tau_{2\epsilon})) \exp(
-\int_0^{\tau_{2\epsilon}}
\frac{\psi(v_{\epsilon}(B_s))}{v_{\epsilon}(B_s)}\,ds)
1(\tau_{2\epsilon} <
\tau_{D}) \right) \nonumber \\
                &\leq & E_y \left (v_\epsilon(B(\tau_{2\epsilon}));
1(\tau_{2\epsilon} < \tau_{D} ) \right )\nonumber \\
                &\leq & c_1\epsilon^{-2} P_y( \tau_{2\epsilon}  \leq
\tau_{D} ) \nonumber \\
& \leq & c_{2 }\epsilon^{-2}P_y( B_{\tau_{D}} \in
\Delta(z,
 \epsilon) ),    \label{fkest}
\end{eqnarray}
where the last inequality follows from the proof of Theorem 3.1 in
\cite{al}.  As $\rho <1 $ and $\theta >2$,  $ 0 < \epsilon < \frac{1}{2}
|y-z|$. Using the above and Lemma 2.1 in \cite{al}, we have that
\begin{equation} 
v_\epsilon(y)  \leq c_{3} |y-z|^{-d} \mbox{dist}( y, \partial D)
\label{ALGbound}
\epsilon^{d-3}. 
\end{equation}
 Since   $D$ is bounded we
have
for all  $y \in D^{\{i\}}_{\theta \epsilon^{\rho}}$,
\[  v_\epsilon(y)  \leq c_{3} (\theta\epsilon^{\rho} )^{-d
+1}\epsilon^{d-3} \leq c_{3} \theta^{-d +1} \epsilon^{d-3 + (- d +1)\rho}
=  c_{3} \theta^{-d+1}. 
\]
As $ d \geq 4$, we can choose $\theta$ large enough to obtain the required
upper bound. The argument is independent of $i$ and  the proof of the
lemma is  complete.
\qed

\subsection{Asymptotics for analytic $\psi$}\label{asymptotics}

In this section we shall assume that $\psi$ is a real analytic
function. We also assume that $a_1=0$ (a version of the results should hold for $a_1>0$ as well, but for the Martin kernel of killed Brownian motion rather than of Brownian motion itself).  Note that if $a_1=0$ then (A1) $\Rightarrow$  \eqref{condition}. The results presented here and their proofs  largely mirror those presented in \cite{sv1} which considered the case $\psi(u) = 2u^2$. The principal difference lies in the proof of Lemma \ref{bound}, where a more delicate argument is required in order to obtain convergence of the power series arising there. 

\begin{theorem} \label{h1h2}  Let $D$ be a bounded $C^{2}$ domain in dimension $d \geq 4$. Assume (A1) and that $a_1=0$. 
Let $A \subset N=\{1, \dots, n\}$. Then for $x,y \in D$,
$$\lim_{\epsilon \rightarrow 0 } \frac{ v^A_\epsilon(y) }{ \prod_{i \in A} v^{i}_\epsilon (x) }= K^A_x(y),$$
where 
\[ K^A_x(y) = \left \{ \begin{array}{ll} K_x^D(y,z_i), & A=\{i\}, \\
&\\
                                        \sum_{\sigma \in \CP(A)} (-1)^{|\sigma|}\psi^{(|\sigma|)}(0)U\Big(\prod_{C\in\sigma} K^{C}_x\Big)(y), & |A | \geq 2.
                       \end{array} \right. \]
\end{theorem}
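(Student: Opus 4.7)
The plan is to induct on $|A|$. For the base case $|A|=1$, $A=\{i\}$, one has $v^{\{i\}}_\epsilon=u^{\{i\}}_\epsilon$ solving $\tfrac12\Delta u=\psi(u)$ in $D$. Since $a_1=0$, $\psi(u)/u\to 0$ as $u\downarrow 0$, and Lemma \ref{thetaest} forces $u^{\{i\}}_\epsilon\to 0$ uniformly off neighborhoods of $z_i$; the Feynman-Kac representation then reduces $u^{\{i\}}_\epsilon$ to an approximately $\tfrac12\Delta$-harmonic function in the interior, and standard Martin-boundary asymptotics for killed Brownian motion (as in Section 5 of \cite{sv1}) give $u^{\{i\}}_\epsilon(y)/u^{\{i\}}_\epsilon(x)\to K_x^D(y,z_i)$.

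For the inductive step, fix $A$ with $|A|\ge 2$ and assume the theorem for all $B\subsetneq A$. By Lemma \ref{thetaest} there exist $\theta,\epsilon_0>0$ such that $u^A_\epsilon<\lambda_0$ on the shrunken domain $D^A_{\theta\epsilon^\rho}$ for $\epsilon<\epsilon_0$, so (c) of Lemma \ref{harmonic} is available there. Writing $g_\epsilon=\phi(u^A_\epsilon,v^A_\epsilon)$ and $-F_\epsilon$ for the right-hand side of \eqref{v^A}, the Feynman-Kac representation for $L_{g_\epsilon}v^A_\epsilon=-F_\epsilon$ on $D^A_{\theta\epsilon^\rho}$ (with exit time $\tau_\epsilon$) yields
$$v^A_\epsilon(y)=E_y\Big[v^A_\epsilon(B_{\tau_\epsilon})e^{-\int_0^{\tau_\epsilon}g_\epsilon(B_s)\,ds}\Big]+E_y\Big[\int_0^{\tau_\epsilon}F_\epsilon(B_s)e^{-\int_0^s g_\epsilon(B_u)\,du}\,ds\Big],$$
call these $I_\epsilon(y)$ and $J_\epsilon(y)$, and analyze each after division by $\prod_{k\in A}v^k_\epsilon(x)$.

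For $J_\epsilon$, split the sum in $F_\epsilon$ into (i) ordered tuples $(C_1,\ldots,C_j)$ whose $C_i$ actually partition $A$ (pairwise disjoint) and (ii) the rest. In (i), $\sum|C_i|=|A|$, the inductive hypothesis gives $\prod_iv^{C_i}_\epsilon(y)/\prod_{k\in A}v^k_\epsilon(x)\to\prod_iK^{C_i}_x(y)$, the $j!$ orderings per unordered partition $\sigma$ absorb the $1/j!$, and the signs collapse via $(-1)^{|A|+\sum|C_i|}=1$ to produce precisely the coefficient $(-1)^{|\sigma|}\psi^{(|\sigma|)}(0)$. In (ii), $\sum|C_i|>|A|$ strictly, so after division an extra factor of some $v^k_\epsilon(x)\to 0$ remains and case (ii) drops out. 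Using that $g_\epsilon\to\psi'(0)=0$ and that $(D^A_{\theta\epsilon^\rho},\tau_\epsilon)\to(D,\tau_D)$, the Green-function limit turns $J_\epsilon(y)/\prod_{k\in A}v^k_\epsilon(x)$ into $U\bigl(\sum_\sigma(-1)^{|\sigma|}\psi^{(|\sigma|)}(0)\prod_{C\in\sigma}K^C_x\bigr)(y)=K^A_x(y)$, since the $|\sigma|=1$ term vanishes via $a_1=0$. For $I_\epsilon$, Brownian motion exits $D^A_{\theta\epsilon^\rho}$ either through $\partial D$, where $v^A_\epsilon$ vanishes, or through some sphere $\partial B(z_j,\theta\epsilon^\rho)$ with $j\in A$; on that sphere $v^A_\epsilon\le v^{A\setminus\{j\}}_\epsilon$, and combining the inductive hypothesis at level $|A|-1$ with the hitting-probability estimates of Lemma \ref{basic1} and the calibration $\rho(d-1)=d-3$ makes $I_\epsilon(y)/\prod_{k\in A}v^k_\epsilon(x)\to 0$.

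The principal obstacle is arranging these passages to the limit uniformly in $\epsilon$, so as to legitimately interchange $\epsilon\to 0$ with the Brownian expectation and with the infinite sum over $j$ in $F_\epsilon$. Assumption (A1) is the natural hook, producing geometric decay of $\psi^{(j)}(\lambda_0)/j!$ so that the series in \eqref{v^A} converges absolutely; but turning (A1) into a usable $\epsilon$-uniform, locally-in-$y$ majorant for $F_\epsilon(y)/\prod_{k\in A}v^k_\epsilon(x)$, together with the companion upper bound on $v^A_\epsilon(y)/\prod_{k\in A}v^k_\epsilon(x)$ needed to drive the induction (including the sphere-boundary bound for $I_\epsilon$), is the delicate step — and is presumably the role of Lemma \ref{bound} flagged in the introduction as requiring more care than the corresponding estimate in \cite{sv1}.
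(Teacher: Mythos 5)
Your proposal follows essentially the same route as the paper's own proof: induct on $|A|$; for the base case identify the ratio $v^i_\epsilon(y)/v^i_\epsilon(x)$ with the Martin kernel via Feynman--Kac and a harmonic-limit argument; for the inductive step use the representation of $v^A_\epsilon$ coming from \eqref{v^A}, split Feynman--Kac into a boundary term and a Green-potential term, and pass to the limit using the inductive hypothesis, the uniform majorant of Lemma \ref{bound}, and the exponent limit of Lemma \ref{unipr}. The one cosmetic difference is organizational: the paper splits the source series into even/odd parts ($E_j$ vs.\ $O_j$) to set up separate $\liminf$ and $\limsup$ inequalities from the signed series, while you split into genuine partitions of $A$ vs.\ overlapping covers. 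These are compatible; the overlapping covers drop out for exactly the reason you state (an extra $v^k_\epsilon(x)\to 0$ survives after dividing), which is what the indicator $1_{\{C_k\cap C_l=\emptyset\}}$ encodes in the paper's statement of the inductive limit. Your sign and counting bookkeeping ($(-1)^{|A|}\cdot\prod(-1)^{|C_i|}=1$ for partitions, $j!$ orderings absorbing $1/j!$, $\psi^{(1)}(0)=a_1=0$ killing the $|\sigma|=1$ term) is correct and matches \eqref{alternateindexing}. You correctly flag that the genuinely delicate content -- the $\epsilon$-uniform domination needed to justify the interchange of limit, expectation, and the infinite sum over $j$ under (A1) -- is exactly what Lemma \ref{bound} supplies, and your sketch of why the boundary term $I_\epsilon$ vanishes (using $v^A_\epsilon\le v^{A\setminus\{j\}}_\epsilon$ on the sphere, the induction at level $|A|-1$, and the calibration $\rho(d-1)=d-3$) mirrors the estimate culminating in \eqref{term1}.
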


As part of the argument, it will emerge that $K^A_x$ is actually finite. Note that if $|A|\ge 2$ we may also write
\begin{equation}
K^A_x(y) =  \sum_{j=2}^{|A|} \f{(-1)^j \psi^{(j)}(0)}{j!} \sum_{\substack{\cup_{i = 1}^j C_i = A, \\ C_i \neq \emptyset \text{ and disjoint} }} U(\prod_{i=1}^j K^{C_i}_x)(y).
\label{alternateindexing}
\end{equation}
It is the latter form that comes naturally out of the formulae of Sections \ref{preliminaries} and 
\ref{BrParticleDescrip}.

We need several  lemmas before starting the proof. 

\begin{lemma} \label{bound}
Assume the conditions of Theorem \ref{h1h2},  and let $\rho$, $\epsilon_0$, and $\theta$ be as in Lemma \ref{thetaest} with $\lambda<\lambda_0/2^n$. Then there is a $C < \infty$  such that $\forall 0<\epsilon < \epsilon_0$ and $y \in D^A_{2^{|A|}\theta\epsilon^{\rho}}$
\[ \frac{ v^A_\epsilon(y) }{ \prod_{i \in A} v^{i}_\epsilon (x) }  \leq C\sum_{i \in A} K^D_x(y,z_i). 
\]
\end{lemma}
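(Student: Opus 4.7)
I plan to induct on $|A|$. For the base case $|A|=1$, writing $v^i_\epsilon$ for $v^{\{i\}}_\epsilon$, the required inequality $v^i_\epsilon(y)\leq C v^i_\epsilon(x)K^D_x(y,z_i)$ follows from the pointwise bound $v^i_\epsilon(y)\leq c_3|y-z_i|^{-d}\mathrm{dist}(y,\partial D)\epsilon^{d-3}$ of \eqref{ALGbound}, combined with the matching interior lower bound $v^i_\epsilon(x)\geq c'\epsilon^{d-3}$ for $x$ in a fixed compact subset of $D$ (a standard super-Brownian hitting estimate for $\Delta^i_\epsilon$, using Lemma \ref{basic1}(d)), and the classical Martin-kernel comparison $K^D_x(y,z_i)\geq c''|y-z_i|^{-d}\mathrm{dist}(y,\partial D)$ in bounded $C^2$ domains (the matching upper direction being Lemma \ref{basic1}(c)).

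For the inductive step with $|A|\geq 2$, assume the bound for every nonempty proper $B\subsetneq A$ on the region $D^B_{2^{|B|}\theta\epsilon^\rho}$. Since $y\in D^A_{2^{|A|}\theta\epsilon^\rho}$, Lemma \ref{thetaest} with $\lambda<\lambda_0/2^n$ gives $v^i_\epsilon(y)<\lambda_0/2^n$ for each $i\in A$, whence $u^A_\epsilon(y)\leq\sum_{i\in A}v^i_\epsilon(y)<\lambda_0$, so the power series in Lemma \ref{harmonic}(c) applies at $y$. Setting $g=\phi(u^A_\epsilon,v^A_\epsilon)\geq 0$, decompose $v^A_\epsilon=U^g F+h$ on $D^A_\epsilon$, where $F$ is (the absolute value of) the source on the right of \eqref{v^A} and $h$ is the $L_g$-harmonic extension of the boundary data of $v^A_\epsilon$ on $\partial D^A_\epsilon$; using $g\geq 0$, dominate $U^g F\leq U|F|$ by the ordinary Green potential in $D^A_\epsilon$. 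Substituting the inductive estimate $v^{C_i}_\epsilon(\zeta)\leq C\prod_{k\in C_i}v^k_\epsilon(x)\sum_{l\in C_i}K^D_x(\zeta,z_l)$ into $F$, the resulting product $\prod_{i=1}^j\prod_{k\in C_i}v^k_\epsilon(x)$ equals $\prod_{k\in A}v^k_\epsilon(x)^{m_k}$, with $m_k\geq 1$ the multiplicity of $k$ in the covering $(C_1,\ldots,C_j)$. Extracting one copy of $\prod_{k\in A}v^k_\epsilon(x)$ and dominating the remainder by $\lambda^{\sum_k m_k-|A|}$ via $v^k_\epsilon(x)\leq\lambda$ reduces $U|F|$ to $\prod_{k\in A}v^k_\epsilon(x)$ times $U$ applied to a series of Martin-kernel products, which in turn is controlled by a constant multiple of $\sum_{i\in A}K^D_x(y,z_i)$ via the standard iterated potential estimate of \cite{sv1} (itself a consequence of Lemma \ref{basic1}(b)-(c)).

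The boundary piece $h$ vanishes on $\partial D\cap\overline{D^A_\epsilon}$, while on each sphere $\partial B^i_\epsilon$, $i\in A$, inclusion of events yields $v^A_\epsilon\leq v^{A\setminus\{i\}}_\epsilon$; the inductive hypothesis at level $|A|-1$ applies there, since for small $\epsilon$ this sphere lies in $D^{A\setminus\{i\}}_{2^{|A|-1}\theta\epsilon^\rho}$ (the balls $B^j_\epsilon$ are disjoint and $2^{|A|-1}\theta\epsilon^\rho<\delta_0-\epsilon$). Combining with the harmonic-measure comparison Lemma \ref{basic1}(a) for the hitting probability of $B^i_\epsilon$ from $y$ and the lower bound $v^i_\epsilon(x)\geq c\epsilon^{d-3}$ from the base case supplies the missing factor $v^i_\epsilon(x)$ and absorbs $h$ into the required estimate. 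The main obstacle, absent in the binary-branching setting of \cite{sv1} where only the $j=2$ term contributes, is the convergence of the infinite series in $j$ that appears after substitution: the residual factors $\lambda^{\sum m_k-|A|}\leq\lambda^{j-1}$, together with the $|A|^{j}$-type combinatorics of coverings, must be Abel-dominated by the coefficients $(-1)^j\psi^{(j)}(0)/j!$, and it is precisely to secure this domination that the quantitative smallness $\lambda<\lambda_0/2^n$ is imposed.
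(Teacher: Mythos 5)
Your overall architecture is right: induction on $|A|$, Feynman--Kac decomposition of $v^A_\epsilon$ on the punctured domain into a boundary piece and an $L_g$-potential piece with $g=\phi(u^A_\epsilon,v^A_\epsilon)\ge 0$, handling of the boundary piece via $v^A_\epsilon\le v^{A\setminus\{i\}}_\epsilon$ together with the hitting estimate of Lemma~\ref{basic1}(a), and use of the inductive bound plus iterated (3-G type) Green potential estimates on the interior piece. This matches the paper's proof up to the point where the series in $j$ is estimated. The base case is also fine, although the paper obtains the lower bound $v^i_\epsilon(x)\gtrsim\epsilon^{d-3}$ via Cauchy--Schwarz and the Palm formula (\eqref{jens}, \eqref{ubound}) and then uses boundary Harnack, rather than quoting a pointwise two-sided Martin kernel bound as you do; both routes are valid.

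The gap is in the treatment of the $j$-fold cover sum. You substitute the inductive estimate $v^{C_i}_\epsilon(\zeta)\le C\prod_{k\in C_i}v^k_\epsilon(x)\sum_{l\in C_i}K^D_x(\zeta,z_l)$ into \emph{all} $j$ factors, even when the cover $(C_1,\dots,C_j)$ is overlapping, and then pass the result through the Green potential $U$. But for overlapping covers the expanded kernel product $\prod_{i=1}^j\sum_{l\in C_i}K^D_x(\cdot,z_l)$ contains terms in which some $K^D_x(\cdot,z_l)$ appears with multiplicity $\ge 2$. The 3-G estimate invoked (and the one in \cite{sv1}) only controls $U\bigl(\prod K^D_x(\cdot,z_{k_i})\bigr)$ when the $z_{k_i}$ are \emph{distinct}; for a repeated kernel one has $U\bigl(K^D_x(\cdot,z_l)^2\bigr)=\infty$ in dimension $d\ge3$ (near $z_l$ one has $G(x,\cdot)K^D_x(\cdot,z_l)^2\asymp \mathrm{dist}(\cdot,\partial D)^3|\cdot-z_l|^{-2d}$, whose integral is $\asymp\int_0^1 r^{2-d}\,dr$), and on the truncated domain $D^A_{\alpha(q-1)}$ the potential is finite but of order $\epsilon^{\rho(3-d)}$, which blows up. Extracting powers $\lambda^{\sum m_k-|A|}$ from the factors $v^k_\epsilon(x)$ does not remedy this, since the divergence sits in the kernel product, not in the numerical coefficient.

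What the paper does — and what you must supply — is a preliminary reduction to \emph{disjoint} covers: for $j\le|A|$, the monotonicity $v^{C_k}_\epsilon v^{C_l}_\epsilon\le v^{C_k\setminus C_l}_\epsilon v^{C_l}_\epsilon$ lets one dominate any overlapping cover by a disjoint one, for which the kernel product has distinct singularities and the 3-G bound gives $U(\prod K)\le c\sum K$. For $j>|A|$, one selects $|A|$ of the $C_i$ whose union is $A$ (and reduces those to a disjoint cover) while bounding the remaining $v^{C_i}_\epsilon(B_t)$ pointwise by $\lambda$ at the path location via Lemma~\ref{thetaest}, \emph{not} by extracting $\lambda$-powers from $v^k_\epsilon(x)$. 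That pointwise absorption at $B_t$ is exactly what keeps the kernel product free of repeated factors and gives the geometric decay in $j$ needed to sum the series. As written, your version of the inductive step would produce a divergent (or $\epsilon$-dependent) bound; you need to incorporate this disjointness reduction and the pointwise $\lambda$-bound on the excess factors for the argument to close.
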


\proof  We prove the lemma by induction on the size of $A$.  

{\bf Step 1: ($|A | = 1$)}   Let $A=\{z\}$  and $v_\epsilon= v^{A}_\epsilon$.  As in the proof of Lemma \ref{thetaest}, we use (\ref{distest}), the Feynman Kac formula,  and (a) of Lemma \ref{basic1}, to get
\begin{eqnarray} \label{lbound}
 v_\epsilon(y)  &= & E_y \left (v_\epsilon(B(\tau_{2 \epsilon})) \exp( -\int_0^{\tau_{2\epsilon}} \frac{\psi(v_{\epsilon}(B_s))}{v_{\epsilon}(B_s)}\,ds) \right) \nonumber \\
               &\leq & E_y \left (v_\epsilon(B(\tau_{2 \epsilon})); B(\tau_{2\epsilon} )\in \partial B(z,2 \epsilon) \right ) \nonumber \\
               &\leq & c_1\epsilon^{-2} P_y( B(\tau_{2\epsilon} )\in \partial B(z,2 \epsilon) ) \nonumber \\
                          &\leq & c_2\epsilon^{-2} m_y^D( \Delta(z, \epsilon) )
\end{eqnarray}
for $y\in D^A_{4\epsilon}$. 
Using the Palm formula (Lemma \ref{palmformula}) with $\phi = 0$ and $n=2$ we then obtain that 
\begin{equation} \label{jens}
 \N_x(X^D(\Delta(z,\epsilon))^2) = \psi^{(2)}(0) E_x \int_0^{\tau_D} [\N_{B_s}(X^D(\Delta(z,\epsilon)))]^2\, ds.  
\end{equation} 
Using the above, the Cauchy-Schwartz inequality, Lemma \ref{palmformula}, and Lemma \ref{basic1} (b)  we have
\begin{eqnarray}\label{ubound}
  v_\epsilon(x) &=& \N_x( X^D(\Delta(z,\epsilon)) >0 ) \nonumber \\
  &\geq& \frac{\N_x( X^D(\Delta(z,\epsilon)) )^2}{\N_x( X^D(\Delta(z,\epsilon))^2 )} \nonumber \\
&=& \frac{ m_x^D(\Delta(z,\epsilon))^2}{\psi^{(2)}(0)\int_D G_D(x,y) m_y^D(\Delta(z,\epsilon))^2 ) \,dy}\nonumber\\
&\geq& c_3 \epsilon^{-2} m_x^D(\Delta(z,\epsilon)).
\end{eqnarray}
 Consequently,
\eqref{lbound}, \eqref{ubound}, and the boundary Harnack principle (see \cite{bbb}) yield
\[ \f{v_\epsilon(y)}{v_\epsilon(x)} \leq c_4 \f{\epsilon^2 m_y^D(\Delta(z,\epsilon))}{\epsilon^2 m_x^D(\Delta(z,\epsilon))} \leq c_5 K_x(y,z) \]
for $y\in D^A_{4\epsilon}$. By decreasing $\epsilon_0$ if necessary, we can (and will) assume that $D^A_{4\epsilon}\supset D^A_{2\theta\epsilon^\rho}$. This establishes the case $|A|=1$.
 
{\bf Step 2: ($|A| >1 $)}  Assume the result for every proper subset of $A$. Set $q=|A|$ and $\alpha(k)=2^k\theta\epsilon^\rho$ for $k=1,\dots,q$. By hypothesis, each $v^i_\epsilon(y)<\lambda_0/2^n$ on $D^A_{\alpha(1)}$. The same is then true of $v^B$ for $B\subset A$, so by Lemma \ref{harmonic} $u^A<\lambda_0$ on $D^A_{\alpha(1)}$. Thus (\ref{v^A}) applies and by the Feynman-Kac formula, we obtain the following on $D^A_{\alpha(q)}$:
\begin{eqnarray}\label{FKac}
v_\epsilon^A(y) &=& E_y (e^{-\int_0^{\tau_{\alpha(q-1)}} \phi_\epsilon^A(B_r)\,dr } v^A_\epsilon(B_{\tau_{\alpha(q-1)}}) ) \nonumber \\
                &+&(-1)^{|A|} \sum_{j=2}^\infty \f{(-1)^j\psi^{(j)}(0)}{j!} \sum_{\substack{\cup_{i=1}^jC_i = A\\ \emptyset \neq C_i \neq A}} E_y \int_0^{\tau_{\alpha(q-1)}}  \prod_{i=1}^j v^{C_i}_\epsilon(B_t) (-1)^{|C_i|} e^{-\int_0^t \phi^A_\epsilon(B_r)\,dr }\,dt \nonumber\\
	&=& I + II.
\end{eqnarray}
Here $0 \leq \phi^A_\epsilon  = \phi( v^A_\epsilon, u^A_\epsilon).$
Consider the first term in \eqref{FKac}:
\begin{eqnarray*} I & \leq& E_y ( v^A_\epsilon(B_{\tau_{\alpha(q-1)}}) ) \\
&\leq & \sum_{i \in A} E_y (v^A_\epsilon(B_{\tau_{\alpha(q-1)}}) 1(B_{\tau_{\alpha(q-1)}} \in \partial B(z_i,\alpha(q-1))))\\
&\leq& \sum_{i \in A} \sup_{ \partial B(z_i, \alpha(q-1))} v_\epsilon^A (\cdot) \,\, P_y( B_{\tau_{\alpha(q-1)}} \in \partial B(z_i,\alpha(q-1))) \\
&\leq& c_6 \sum_{i \in A} \sup_{ \partial B(z_i, \alpha(q-1))} v_\epsilon^A (\cdot) \,\, m_y^D(\Delta(z_i,\alpha(q-2))),
\end{eqnarray*}
where the last inequality follows from Lemma \ref{basic1} (a) [with $z_i$ for $z$ and $\alpha(q-2)$ for $\epsilon$]. We know that $v^A_\epsilon \leq v_\epsilon^{A\backslash\{i\}}$. Using this and \eqref{ubound} with $z=z_i$, we have
\begin{eqnarray} 
\f{I}{ \prod_{j \in A} v^j_\epsilon(x)} 
&\leq & c_6\sum_{i \in A} \f{\sup_{ \partial B(z_i, \alpha(q-1))}v^{A\backslash\{i\}}_\epsilon(\cdot)  }{ \prod_{j \in A, j \neq i} v^j_\epsilon(x)} \frac{m_y^{D}(\Delta(z_i, \alpha(q-2)))}{v^i_\epsilon(x)} \nonumber \\
&\leq & c_7 \sum_{i \in A} \f{\sup_{ \partial B(z_i, \alpha(q-1))}v^{A\backslash\{i\}}_\epsilon(\cdot)  }{ \prod_{j \in A, j \neq i} v^j_\epsilon(x)} \frac{m_y^{D}(\Delta(z_i, \alpha(q-2)))}{\epsilon^{-2} m_x^D(\Delta(z_i, \epsilon))} \label{firsttermbound}. 
\end{eqnarray} 
By the boundary Harnack principle, 
$$
m_y^{D}(\Delta(z_i, \alpha(q-2)))\le c_8 m_x^{D}(\Delta(z_i, \alpha(q-2)))K_x(y,z_i),
$$
and as in (\ref{ALGbound}), $m_x^{D}(\Delta(z_i, \alpha(q-2)))\le c_9\alpha(q-2)^{d-1}\le c_{10} \epsilon^{d-3}$. Likewise $\epsilon^{-2} m_x^D(\Delta(z_i, \epsilon))\ge c_{11}\epsilon^{d-3}$, by (d) of Lemma \ref{basic1}.  By induction, $v^{A\backslash\{i\}}_\epsilon(\cdot) / \prod_{j \in A, j \neq i} v^j_\epsilon(x)$ is bounded, in particular, on $ D^{A\setminus\{i\}}_{\alpha(q-1)}$ by $c_{12}\sum_{j\in A\setminus\{i\}}K(\cdot, z_j)$, which by Lemma \ref{basic1} (c) is bounded by $c_{13}\alpha(q-1)$ on $\partial B(z_i, \alpha(q-1))$. Therefore
\begin{equation}
\frac{I}{\prod_{j \in A}v^j_\epsilon(x)}\le  c_{14}\alpha(q-1)
\sum_{i\in A}  K_x(y,z_i) \label{term1}
\end{equation}
Let $$E_j = \{ \{C_i\}_{i=1}^j :  \emptyset \neq C_i \neq A, \cup_{i=1}^j C_i = A, \mbox{ and }  |A| + \sum_{i=1}^j |C_i| \mbox{ is even} \}$$ and 
$$O_j = \{ \{C_i\}_{i=1}^j :  \emptyset \neq C_i \neq A, \cup_{i=1}^jC_i = A, \mbox{ and }  |A| + \sum_{i=1}^j |C_i| \mbox{ is odd} \}.$$

Then the second term (II) in \eqref{FKac} is, 
\begin{eqnarray*}
 & = &\sum_{j=2}^\infty \f{(-1)^j\psi^{(j)}(0)}{j!} \sum_{E_j} E_y \int_0^{\tau_{\alpha(q-1)}} \prod_{i=1}^j v^{C_i}_\epsilon(B_t)  e^{-\int_0^t \, \phi^A_\epsilon(B_r) \,dr}\,dt \nonumber \\&& - \sum_{j=2}^\infty \f{(-1)^j\psi^{(j)}(0)}{j!} \sum_{O_j} E_y \int_0^{\tau_{\alpha(q-1)}}  \prod_{i=1}^j v^{C_i}_\epsilon(B_t)  e^{-\int_0^t \,\phi^A_\epsilon(B_r)\,dr }\,dt \nonumber \\
&\leq& \sum_{j=2}^\infty \f{(-1)^j\psi^{(j)}(0)}{j!} \sum_{E_j} E_y \int_0^{\tau_{\alpha(q-1)}} \prod_{i=1}^j v^{C_i}_\epsilon (B_t)  \,dt
\end{eqnarray*}
as all terms in the second summand are non-negative.
Therefore
\begin{eqnarray} 
\frac{II}{\prod_{i\in A} v^{i}_\epsilon (x)}  &\leq& \sum_{j=2}^\infty \f{(-1)^j\psi^{(j)}(0)}{j!} \sum_{E_j} E_y \int_0^{\tau_{\alpha(q-1)}}  \frac{\prod_{i=1}^j v^{C_i}_\epsilon (B_t)}{\prod_{i\in A} v^{i}_\epsilon (x)} \,dt.  \label{term20}
\end{eqnarray}
First consider the case  $j \leq |A|$ and $\{C_i\}_{i=1}^j \in E_j$. We observe that for $k \neq l$, if $C_k\setminus C_l\neq \emptyset$ then
$$v^{C_k}_\epsilon v^{C_l}_\epsilon \leq v^{C_k\setminus C_l}_\epsilon v^{C_l}_\epsilon.$$
So every term in this sum with $j \leq |A|$ is dominated by another term in which  $\{C_i\}_{i=1}^j \in E_j$ are such that $C_k \cap C_l = \emptyset$ for $k \neq l$. Thus it suffices to bound such terms. For disjoint $\{C_i\}_{i=1}^j \in E_j$,
\begin{eqnarray} \label{term22}
 E_y \int_0^{\tau_{\alpha(q-1)}} \f{\prod_{i=1}^j v^{C_i}_\epsilon (B_t)}{\prod_{i\in A} v^{i}_\epsilon (x)}\,dt &\leq& C^j  E_y \int_0^{\tau_{\alpha(q-1)}}  \f{\prod_{i=1}^j   \sum_{k \in C_i} K_x(B_t, z_k)\prod_{l \in C_i} v^{l}_\epsilon (x) }{\prod_{i\in A} v^{i}_\epsilon (x)} \,dt\nonumber \\
&=&
C^j   E_y \int_0^{\tau_{\alpha(q-1)}}   \prod_{i=1}^j  \sum_{k \in C_i}K_x(B_t, z_k)\,dt.
\end{eqnarray}
Let $k_1, \dots, k_j$ be distinct. Then 
\begin{eqnarray} \label{term23}
E_y \int_0^{\tau_{\alpha(q-1)}} &\prod_{i=1}^j&  K_x(B_t, z_{k_i})\,dt \le G_D(\prod_{i=1}^j K_x(\cdot, z_{k_i})) (y) \nonumber \\
&\leq &  c_{15} \sum_{i=1}^jG_D( K_x(\cdot, z_{k_i})) (y) \leq c_{15} \sum_{i=1}^j K_x(y, z_{k_i}) \nonumber\\
&\leq& c_{16} \sum_{k=1}^{|A|} K_x(y, z_{k})
\end{eqnarray}
where  the second inequality is due to the fact that $z_i$'s are separated by $\delta_0$ and the third inequality follows from the ``3-G'' theorem (see (5.17) \cite{sv1}).  It follows from (\ref{term22}) that 
\begin{equation}
E_y \int_0^{\tau_{\alpha(q-1)}}  \frac{\prod_{i=1}^j v^{C_i}_\epsilon (B_t)}{\prod_{i\in A} v^{i}_\epsilon (x)}   \,dt
\le c_{17} \sum_{k=1}^{|A|} K_x(y, z_{k}).\label{term24}
\end{equation}

Now consider the case $j>|A|$. For $\{C_i\}_{i=1}^j \in E_j$ we can select $|A|$ of the $C_i$ whose union is $A$, and apply the above bound to them. For the other $C_i$ we apply Lemma \ref{thetaest}, which gives that $v^{C_i}_\epsilon\le \max_k v^k_\epsilon\le \lambda$, where $2^{|A|}\lambda<\lambda_0$. 
Using \eqref{term20}, and \eqref{term24} we have
\begin{align*}
\frac{II}{\prod_{i\in A} v^{i}_\epsilon (x)} 
&\leq c_{17} \left [  \sum_{j=2}^{|A|} \f{(-1)^j\psi^{(j)}(0) |E_j |}{j!}  +  \sum_{j= |A| +1}^{\infty} \f{(-1)^j\psi^{(j)}(0) |E_j| \lambda^{j-|A|}}{j!}   \right]   \sum_{k=1}^{|A|} K_x(y, z_k)\\
&\leq c_{17} \left [  \sum_{j=2}^{|A|} \f{(-1)^j\psi^{(j)}(0) 2^{j|A|}}{j!}  +  \sum_{j= |A| +1}^{\infty} \f{(-1)^j\psi^{(j)}(0) 2^{j |A|}\lambda^{j-|A|}}{j!}   \right] \sum_{k=1}^{|A|} K_x(y, z_k)\\ 
&\leq c_{18} \sum_{k=1}^{|A|} K_x(y, z_k).
\end{align*}
\qed

\begin{lemma} \label{unipr} Assume the conditions of Theorem \ref{h1h2}. 
Let $y \in D^A_\delta $, where $\delta<\delta_0$. Then uniformly in $z \in \partial D$,
$$\lim_{\epsilon \rightarrow 0 } P_{yz}( \exp(-\int_0^{\tau_\delta} \phi^A_\epsilon(B_t) \,dt)) = 1.$$
\end{lemma}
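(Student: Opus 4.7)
The plan is to reduce to showing that $E_{yz}\bigl[\int_0^{\tau_\delta}\phi^A_\epsilon(B_t)\,dt\bigr]\to 0$ uniformly in $z\in\partial D$, and then to bound this expectation by the product of a uniform sup-norm estimate on $\phi^A_\epsilon$ and a uniform bound on the conditioned lifetime. Since $0\le 1-e^{-x}\le x$ for $x\ge 0$, the stated convergence of $P_{yz}(\exp-\int\phi)$ to $1$ will follow from such uniform convergence.

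The main work lies in establishing the sup-norm bound $\sup_{w\in D^A_\delta}\phi^A_\epsilon(w)\to 0$ as $\epsilon\to 0$. First note $v^A_\epsilon\le u^A_\epsilon$, since $X^D$ charging every $\Delta^i_\epsilon$ (for $i\in A$) is a subevent of charging at least one of them. Since $\psi$ is convex on $[0,\infty)$ with $\psi'$ nondecreasing, the mean-value theorem applied separately to the cases $|A|$ even and $|A|$ odd yields
\[
\phi^A_\epsilon=\phi(u^A_\epsilon,v^A_\epsilon)=\frac{\psi(u^A_\epsilon+(-1)^{|A|}v^A_\epsilon)-\psi(u^A_\epsilon)}{(-1)^{|A|}v^A_\epsilon}\le\psi'(u^A_\epsilon+v^A_\epsilon)\le\psi'(2u^A_\epsilon).
\]
Combined with the elementary bound $u^A_\epsilon\le\sum_{i\in A}v^{\{i\}}_\epsilon$ and with Lemma \ref{thetaest} applied to a prescribed $\lambda>0$ (which for $\epsilon$ small gives $v^{\{i\}}_\epsilon\le\lambda$ on $D^A_{\theta\epsilon^\rho}\supset D^A_\delta$, since $\theta\epsilon^\rho<\delta$ eventually), we obtain $\sup_{D^A_\delta}u^A_\epsilon\le|A|\lambda$. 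As $\lambda$ is arbitrary and $\psi'$ is continuous at $0$ with $\psi'(0)=a_1=0$, it follows that $\sup_{D^A_\delta}\phi^A_\epsilon\to 0$.

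Finally, I bound
\[
E_{yz}\Bigl[\int_0^{\tau_\delta}\phi^A_\epsilon(B_t)\,dt\Bigr]\le\Bigl(\sup_{w\in D^A_\delta}\phi^A_\epsilon(w)\Bigr)E_{yz}[\tau_D],
\]
and invoke the Cranston--McConnell theorem: for a bounded $C^2$ domain, the expected conditioned lifetime $E_{yz}[\tau_D]$ is bounded by a constant depending only on $y$ and $D$, uniformly in $z\in\partial D$ (this may be proved via the 3G inequality, and is used in the same spirit as the estimates of Lemma \ref{basic1}). Multiplying the two bounds gives the desired uniform convergence.

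The main obstacle is the sup-norm bound on $\phi^A_\epsilon$, whose delicate part is the uniform decay of $u^A_\epsilon$ on $D^A_\delta$; once Lemma \ref{thetaest} is invoked this is direct. The Cranston--McConnell step is classical, so the remainder is routine. The argument parallels the corresponding estimate in \cite{sv1}, with Lemma \ref{thetaest} replacing the quadratic-case pointwise bound used there.
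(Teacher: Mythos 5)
Your proof is correct and follows essentially the same two-step strategy as the paper: show $\phi^A_\epsilon\to 0$ uniformly on $D^A_\delta$, then combine with a uniform bound on the conditioned lifetime. The paper's proof invokes Lemma~\ref{bound} for the first step (rather than Lemma~\ref{thetaest} together with the union bound $u^A_\epsilon\le\sum_{i\in A}v^i_\epsilon$, as you do) and cites Lemma 3.7 of Chung--Zhao for the second step (rather than the elementary inequality $1-e^{-x}\le x$ together with the expected conditional lifetime bound, as you do); these are cosmetic variations on the same argument, and your version of the second step is in fact slightly more direct.
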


\proof  We have $v^{i}_\epsilon(\cdot) \rightarrow 0$ as $\epsilon \rightarrow 0.$ So by the assumptions on $\psi$, and Lemma \ref{bound} and  
$$\phi^A_\epsilon (\cdot) = \f{\psi(u_\epsilon^A + (-1)^{|A|}v_\epsilon^A) - \psi(u_\epsilon^A)}{(-1)^{|A|} v_\epsilon^A} (\cdot) \rightarrow 0.$$ 
Consequently it suffices to prove that 
$$\lim_{\lambda \rightarrow 0 } P_{yz}( \exp(-\lambda \tau_{\delta} ) )= 1,$$ uniformly in $z$. As
$D$ is Lipschitz, we have $$ \sup_{ y \in D_{\delta}, z \in \partial D} P_{yz} (\tau_\delta) < \infty$$
by \cite{cfz}. Then Lemma 3.7 in \cite{cz} implies the result.
\qed

{\em Proof of Theorem \ref{h1h2}:}  We will use induction on the size of $A$. First consider the case where our target is a single point. Let $A= \{i\}$.   When convenient in the proof we will use $D_{\delta}$ for $D^{A}_{\delta}$ for some $\delta >0.$ Let $x,y \in D_{\delta_0}$. By the Feyman-Kac Formula, for each fixed $\delta < \delta_0$ we have

\begin{eqnarray*}
 \f{v^i_\epsilon(y)}{v^i_\epsilon(x)} &=& \f{E_y(v^i_\epsilon(B_{\tau_\delta}) \exp(\int_0^{\tau_\delta} \phi^A_\epsilon(B_r)\,dr) )}{ E_x(v^i_\epsilon(B_{\tau_\delta}) \exp(\int_0^{\tau_\delta} \phi^A_\epsilon(B_r)\,dr)} \\ &=& \f{\int_{\partial D_\delta}  P_{yz}( \exp(-\int_0^{\tau_\delta} \phi^A_\epsilon(B_t) \,dt)) K_x^{D_\delta}(y,z)v^i_\epsilon(z) \,m_x^{D_\delta}(dz) }{\int_{\partial D_\delta}  P_{xz}( \exp(-\int_0^{\tau_\delta} \phi^A_\epsilon(B_t) \,dt)) v^i_\epsilon(z) \,m_x^{D_\delta}(dz) } \\
&=& \f{\int_{\partial D_\delta}  P_{yz}( \exp(-\int_0^{\tau_\delta} \phi^A_\epsilon(B_t) \,dt)) K_x^{D_\delta}(y,z)v^i_\epsilon(z) \,m_x^{D_\delta}(dz) }{\int_{D_\delta} v^i_\epsilon(z)\,m_x^{D_\delta}(dz)}  \times\\
&& \hspace{0.5in} \times \f{\int_{D_\delta} v^i_\epsilon(z)\,m_x^{D_\delta}(dz)}{\int_{\partial D_\delta}  P_{xz}( \exp(-\int_0^{\tau_\delta} \phi^A_\epsilon(B_t) \,dt)) v^i_\epsilon(z) \,m_x^{D_\delta}(dz) }. \\
\end{eqnarray*}
The measure 
\[ \lambda_{\epsilon, \delta}(x, dz) = \f{v^i_\epsilon(z)\,m_x^{D_\delta}(dz)}{\int_{D_\delta} v^i_\epsilon(z)\,m_x^{D_\delta}(dz)}\]
is a probability measure on  $\partial D^A_\delta.$ Since  the boundary of  $D^A_\delta$ is compact, by Prohorov's theorem any 
sequence $\epsilon_j$ has a subsequence, again written $\epsilon_j$, for which $\lambda_{\epsilon_j, \delta}(x, dz) \Rightarrow \lambda_{\delta}(x, dz)$
weakly in the space of probability measures. Also, $K_x^{D_\delta}(y,z)$ is continuous and bounded in $z$, for $z \in D \cap \partial D_\delta^A$, when $x,y \in D^A_{\delta_0}.$ Consequently,  Lemma \ref{unipr} implies for $x,y \in D^A_{\delta_0}$ and for all $\delta < \delta_0$
\[ \lim_{j \rightarrow \infty} \frac{v^i_{\epsilon_j}(y)}{v^i_{\epsilon_j}(x)} = \int_{\partial D_\delta} K_x^{D_\delta}(y,z) \,\lambda_\delta(x, dz). \]
The limiting function is harmonic in $y$ for $y \in D_{\delta_0}$. By a diagonlization argument, we can assume there exists a convergent subsequence of our sequence
such that the convergence  holds simultaneously  for a sequence of $\delta_j$'s  which converge to $0$. By  Lemma \ref{bound} we see then that the limit  is harmonic
in $y$ with boundary value $0$ on $\partial D \cap \partial D^A_\delta$ for all $\delta >0,$ and is $1$ at $y =x. $ This implies that  the limit is the Martin Kernel for Brownian motion in $D$. This all subsequences  have a subsequence  which converges to the Martin kernel, and so the limit itself exists.

To prove the induction step, fix $A$ and assume that the result is true for  all proper subsets of $A$.  Therefore if $\cup_{i=1}^j C_i = A$ and $\emptyset \neq C_i \neq A$ we have
\[  \lim_{\epsilon \rightarrow 0} \f{ \prod_{i=1}^j v^{C_i}_\epsilon(y)}{ \prod_{i\in  A} v^{i}_\epsilon(x) }=  \left \{ \begin{array}{ll} \prod_{i=1}^j K_x^{C_i}(y) 1_{\{C_k \cap C_l = \emptyset,\,\, 1 \leq  k \neq l \leq n\} } & \mbox{ if } 1 \leq j \leq |A| \\
 &\\
                                                          0 & \mbox{ otherwise. }       
\end{array} \right. 
\]
Let $x, y \in D$.  Let $\epsilon >0, \eta = \theta \epsilon^{\rho} >0$ be small enough so that $x,y \in D_{\eta}$ and  Lemma \ref{bound}, Lemma \ref{unipr},  and Lemma \ref{thetaest} apply. By the Feynmann Kac formula and \eqref{v^A}, we have
 then 
\begin{eqnarray}
\f{v^A_\epsilon(y)}{\prod_{i \in A}v^i_\epsilon(x) } 
&\geq &   \sum_{j=2}^\infty \f{(-1)^j\psi^{(j)}(0)}{j!}  \sum_{E_j} E_y \int_0^{\tau_{\eta}} \f{\prod_{i=1}^j v^{C_i}_\epsilon(B_t)} {\prod_{i \in A}v^i_\epsilon(x) } e^{-\int_0^t \phi^A_\epsilon(B_r)\,dr }\,dt \nonumber  \\&& - \sum_{j=2}^\infty \f{(-1)^j\psi^{(j)}(0)}{j!} \sum_{O_j} E_y \int_0^{\tau_\eta} \f{\prod_{i=1}^j v^{C_i}_\epsilon(B_t)}{{\prod_{i \in A}v^i_\epsilon(x) }}  e^{-\int_0^t \phi^A_\epsilon(B_r) \,dr}\,dt. \nonumber 
\end{eqnarray}
By  Lemma \ref{bound}, Lemma \ref{unipr},  the dominated convergence theorem (which applies as in Lemma \ref{bound} by the bound provided by Lemma \ref{thetaest}) 
 and the induction hypothesis we have
\begin{eqnarray} \label{lbound2}
\liminf_{\epsilon \rightarrow 0} \f{v^A_\epsilon(y)}{\prod_{i \in A}v^i_\epsilon(x) }
&\geq& \sum_{j=2}^{|A|} \f{(-1)^j\psi^{(j)}(0)}{j!}  \sum_{\substack{\cup_{i = 1}^j C_i = A \\ C_i \neq \emptyset = C_k \cap C_l  }} E_y \int_0^{\tau_D}  \prod_{i=1}^jK_x^{C_i}(B_t) \,dt .
\end{eqnarray}
For the upper bound,  as  in the proof of Lemma \ref{bound} we have\begin{eqnarray*}
\f{v^A_\epsilon(y)}{\prod_{i \in A}v^i_\epsilon(x) } &\leq & \f{E_y(v^A_\epsilon(B_{\tau_\eta}))}{\prod_{i \in A}v^i_\epsilon(x) } + \sum_{j=2}^\infty \f{(-1)^j\psi^{(j)}(0)}{j!}  \sum_{E_j} E_y \int_0^{\tau_\eta} \f{\prod_{i=1}^j v^{C_i}_\epsilon(B_t)} {\prod_{i \in A}v^i_\epsilon(x) }\,dt\nonumber\\
&\leq & c_4 2^{\mid A \mid -1}\eta\sum_{i \in A} 
K_x(y, z_i) + \sum_{j=2}^\infty \f{(-1)^j\psi^{(j)}(0)}{j!}  \sum_{E_j} E_y \int_0^{\tau_\eta} \f{\prod_{i=1}^j v^{C_i}_\epsilon(B_t)} {\prod_{i \in A}v^i_\epsilon(x) }\,dt.\nonumber
\end{eqnarray*}
Letting $\epsilon\to 0$, the first term $\to 0$ (since $\eta\to 0$). Using the inductive hypothesis and dominated convergence theorem  for the second term  we obtain
\begin{eqnarray} \label{ubound2}
\limsup_{\epsilon \rightarrow 0} \f{v^A_\epsilon(y)}{\prod_{i \in A}v^i_\epsilon(x) }
&\leq & \sum_{j=2}^{|A|} \f{(-1)^j\psi^{(j)}(0)}{j!}  \sum_{\substack{\cup_{i = 1}^j C_i = A \\ C_i \neq \emptyset = C_k \cap C_l  }} E_y \int_0^{\tau_D}  \prod_{i=1}^jK_x^{C_i}(B_t) \,dt. 
\end{eqnarray}
\eqref{alternateindexing}, \eqref{lbound2} and \eqref{ubound2} now yield the result. \qed

\begin{theorem} \label{limitingXtransform}
Assume the conditions of Theorem \ref{h1h2}. Let
  $\Phi_k \in {{\mathcal F}}_k$ be bounded, and fix $x \in D$. Then
  for $ y \in D$,
\[ \lim_{\epsilon \rightarrow 0} \N_{y}(\Phi_k  | \prod_{i=1}^n \langle X^D, 1_{\Delta^i_\epsilon} \rangle > 0 ) = \f{1}{K^N_x(y)} \N_y(\Phi_k M_k^N), \]
where $$
M_k^N = \sum_{\sigma \in \CP(N)} \prod_{C \in \sigma} \langle
X^k, K_x^C \rangle.$$
\end{theorem}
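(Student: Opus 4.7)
The plan is to mirror the approach of \cite{sv1}, adapted to general branching. First, apply the Markov property at the exit from $D_k$: for $\Phi_k\in\mathcal{F}_k$ bounded, writing $F_i=\{X^D(\Delta^i_\epsilon)>0\}$,
\[
\N_y\bigl(\Phi_k;\bigcap_i F_i\bigr) = \N_y\bigl(\Phi_k\,h_\epsilon(X^k)\bigr),\qquad h_\epsilon(\mu) = \mathbb{P}_\mu\bigl(\bigcap_i F_i\bigr).
\]
Expanding $\prod_i 1_{F_i}=\prod_i(1-1_{F_i^c})$ and using $\mathbb{P}_\mu(X^D(\Delta^A_\epsilon)=0)=e^{-\langle\mu,u^A_\epsilon\rangle}$ (with $u^\emptyset_\epsilon\equiv 0$) yields
\[
h_\epsilon(\mu) = \sum_{A\subset N}(-1)^{|A|}e^{-\langle\mu,u^A_\epsilon\rangle},
\]
which is precisely the martingale $\breve M_k$ of Section \ref{BrParticleDescrip} applied to the functions $u^A_\epsilon$. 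Since $\N_y(\bigcap_i F_i)=v^N_\epsilon(y)$, the conditional expectation equals $\N_y(\Phi_k h_\epsilon(X^k))/v^N_\epsilon(y)$, and by Theorem \ref{h1h2} the denominator satisfies $v^N_\epsilon(y)/\prod_j v^j_\epsilon(x)\to K^N_x(y)$.

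For the numerator I would invoke the identity from Lemma~4.3 of \cite{sv2} (already used in Section \ref{BrParticleDescrip}) to rewrite
\[
h_\epsilon(X^k) = e^{-\langle X^k,u^N_\epsilon\rangle}\sum_{m=1}^\infty\frac{1}{m!}\sum_{\substack{C_1\cup\cdots\cup C_m=N\\ C_i\neq\emptyset}}\prod_{i=1}^m\langle X^k,v_{C_i,\epsilon}\rangle,
\]
and divide by $\prod_{j=1}^n v^j_\epsilon(x)$. For each tuple $(C_1,\dots,C_m)$ set $c_j=|\{i:j\in C_i\}|\ge 1$. Rearranging, the normalized product factors as
\[
\prod_j v^j_\epsilon(x)^{c_j-1}\cdot\prod_{i=1}^m\frac{\langle X^k,v_{C_i,\epsilon}\rangle}{\prod_{j\in C_i}v^j_\epsilon(x)}.
\]
Since $v^j_\epsilon(x)\to 0$, the prefactor vanishes unless $(C_1,\dots,C_m)$ is an ordered partition of $N$ (every $c_j=1$).

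For each ordered partition I would combine Theorem \ref{h1h2} with the M\"obius-inversion form $v_{C,\epsilon}=\sum_{C\subset B\subset N}(-1)^{|B|-|C|}v^B_\epsilon$ of Lemma \ref{harmonic}(a) to obtain
\[
\frac{v_{C,\epsilon}(z)}{\prod_{j\in C}v^j_\epsilon(x)} = \frac{v^C_\epsilon(z)}{\prod_{j\in C}v^j_\epsilon(x)}+\sum_{C\subsetneq B}(-1)^{|B|-|C|}\frac{v^B_\epsilon(z)}{\prod_{j\in B}v^j_\epsilon(x)}\prod_{j\in B\setminus C}v^j_\epsilon(x)\longrightarrow K^C_x(z),
\]
the extra terms vanishing because $\prod_{j\in B\setminus C}v^j_\epsilon(x)\to 0$. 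Since each unordered partition $\sigma\in\CP(N)$ with $|\sigma|=m$ admits exactly $m!$ orderings, the $1/m!$ collapses the ordered sum to produce the limit $M^N_k=\sum_{\sigma\in\CP(N)}\prod_{C\in\sigma}\langle X^k,K^C_x\rangle$; combined with the denominator limit, this yields the claimed formula.

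The main obstacle is justifying the passage to the limit through both $\N_y$ and the infinite series. I would dominate $\langle X^k,v_{C,\epsilon}\rangle/\prod_{j\in C}v^j_\epsilon(x)$ by a constant multiple of $\sum_{j\in C}\langle X^k,K^{\{j\}}_x\rangle$ using Lemma \ref{bound} (uniformly in small $\epsilon$), and thereby bound the full series by $\sum_m\frac{1}{m!}(2^n)^m C^m\bigl\langle X^k,\sum_j K^{\{j\}}_x\bigr\rangle^m\le e^{c\langle X^k,\sum_j K^{\{j\}}_x\rangle}$. Integrability of this exponential under $\N_y$ follows from Lemma \ref{expmom}, permitting the dominated convergence that completes the argument.
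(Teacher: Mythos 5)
Your proposal is correct in outline but takes a genuinely different route from the paper. The paper (following Lemmas 5.8 and 5.9 of \cite{sv1}) expands $\N_y(\cap_i F_i\mid\mathcal{F}_k)$ in terms of the factors $W^C_\epsilon=\exp((-1)^{|C|}\langle X^k,v^C_\epsilon\rangle)-1$, a sum over ordered tuples of \emph{distinct} nonempty subsets, hence a finite sum with at most $2^n-1$ terms, and then proves the analogue of your convergence claim via Lemma \ref{we2}. You instead use the polynomial expansion in $\langle X^k,v_{C_i,\epsilon}\rangle$ from Lemma 4.3 of \cite{sv2} --- the same identity that underlies $\breve M_k$ in Section \ref{BrParticleDescrip}. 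This is arguably more self-contained (it avoids black-boxing the \cite{sv1} lemmas and parallels the particle picture of Theorem \ref{theorem}), and your bookkeeping with the multiplicities $c_j$ and the M\"obius inversion $v_{C,\epsilon}=\sum_{C\subset B}(-1)^{|B|-|C|}v^B_\epsilon$ correctly identifies the surviving terms as the $m!$ orderings of each partition.

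The one genuine gap is in the domination step. Your expansion is an infinite series in $m$ (repeats among the $C_i$ are allowed), and the bound you propose, $\sum_m\frac{(2^nC)^m}{m!}\langle X^k,\sum_j K^{\{j\}}_x\rangle^m=e^{2^nC\langle X^k,\sum_j K^{\{j\}}_x\rangle}$, requires integrability of $\exp(2^nCB_k\langle X^k,1\rangle)$ under $\N_y$, where $B_k=\sup_{\partial D_k}\sum_j K^{\{j\}}_x$. Lemma \ref{expmom} only supplies \emph{some} small $\lambda>0$ for which $\N_y(e^{\lambda\langle X^D,1\rangle}-1)<\infty$, and there is no reason for $2^nCB_k\le\lambda$. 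The fix is to use the prefactor $\prod_j v^j_\epsilon(x)^{c_j-1}$ that you have already isolated, but quantitatively: since $\sum_j(c_j-1)\ge m-n$, this prefactor is $\le(\max_j v^j_\epsilon(x))^{m-n}$, so for $\epsilon$ small enough that $2^nC\max_j v^j_\epsilon(x)$ is below the threshold from Lemma \ref{expmom}, the tail $m>n$ is dominated by an integrable exponential. The finitely many terms $m\le n$ then require only finiteness of the $n$-th moment $\N_y(\langle X^k,1\rangle^n)$, which is immediate from Lemma \ref{expmom} (or Lemma \ref{palmformula}). With that refinement your argument goes through.
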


\begin{proof} We will need two preliminary Lemmas. 
\begin{lemma}\label{we} Assume the conditions of Theorem \ref{h1h2}. 
Set $W^C_\epsilon = \exp( (-1)^{|C|} \langle X^k, v^C_\epsilon \rangle) -1.$ Then
\begin{eqnarray*}
\N_y\left( \prod_{i=1}^n \langle X^D, 1_{\Delta^i_\epsilon} \rangle >0  | \CF_k \right )  &=& \sum_{j=1}^{2^{|N|} -1}
\frac{1}{j!}  \sum_{\substack{C_1 \cup C_2 \ldots C_j\\ C_1 \cup C_2 \ldots \cup C_j = N \\ \emptyset \neq C_i \forall i }}
 \left (\prod_{i=1}^j |W^{C_i}_\epsilon| \right) (-1)^{n + \sum_{i=1}^j | C_i | }
\end{eqnarray*}
\end{lemma}

\begin{proof} Using the arguments presented in \cite{dy1} or otherwise one can
verify that 
\[ 
u^{A, \lambda}_\epsilon(y)  = \N_y(1 - \exp(-\lambda \langle X^D, \sum_{i \in A}1_{\Delta_\epsilon^i} \rangle)) 
\]
increases to $u^A_\epsilon(y)$ for all $y \in D$ as $\lambda \rightarrow \infty$. From here on,
the  proof of this lemma is the same as the proof of Lemma 5.8 in \cite{sv1}. We will not present 
it again here, other than to remark that it is based on the Markov property of exit measures. Note that a different indexing system is used in \cite{sv1}, which accounts for the $j!$ factor. 
\end{proof}

\begin{lemma} \label{we2} Assume the conditions of Theorem \ref{h1h2}. 
Let $\Phi_k \in \CF_k$ be bounded. Let $C_1, \ldots, C_j$ be distinct and nonempty, with
$\cup_{i=1}^j C_i = N$. Then
\[ \lim_{\epsilon \rightarrow 0}\f{\N_y( \Phi_k \prod_{i=1}^j | W^{C_i}_\epsilon |)}{ \prod_{i=1}^n v^i_\epsilon(x) } = \N_x( \Phi_k \prod_{i=1}^j \langle X^k, K^{C_i}_x \rangle)1_{\{C_i \mbox{ disjoint }\}}. \]
\end{lemma}

\begin{proof} The proof of this lemma can be obtained by imitating the proof
of Lemma 5.9 in \cite{sv1}. The only changes one has to make are to use:
Theorem \ref{h1h2} in place of Theorem 5.3; Lemma \ref{bound} instead
of Lemma 5.4; Lemma \ref{expmom} instead of Lemma 2.7.
\end{proof}

To complete the proof of the Theorem, observe that
\begin{eqnarray*}
\lefteqn{\N_y (\Phi_k | \prod_{i=1}^n \langle X^D, 1_{\Delta^i_\epsilon} \rangle >0 )}\\
&=&\f{\N_y (\Phi_k , \prod_{i=1}^n \langle X^D, 1_{\Delta^i_\epsilon} \rangle >0 )}{\N_y (\prod_{i=1}^n \langle X^D, 1_{\Delta^i_\epsilon} \rangle >0 )}\\
&=&\f{\N_y (\Phi_k \N_y (\prod_{i=1}^n \langle X^D, 1_{\Delta^i_\epsilon} \rangle >0 | \CF_k ))}{\prod_{i=1}^n v^i_{\epsilon}(x)}\times \f{\prod_{i=1}^n v^i_{\epsilon}(x)}{v_\epsilon^N(y)}.
\end{eqnarray*}
By Theorem \ref{h1h2}, $$\f{\prod_{i=1}^n v^i_{\epsilon}(x)}{v_\epsilon^N(y)} \rightarrow \f{1}{K_x^N(y)}.$$  By Lemma \ref{we} and Lemma \ref{we2}, 
\begin{eqnarray*}
\lefteqn{\f{\N_y (\Phi_k \N_y (\prod_{i=1}^n \langle X^D, 1_{\Delta^i_\epsilon} \rangle >0 | \CF_k ))}{\prod_{i=1}^n v^i_{\epsilon}(x)} }\\
&=& \sum_{j=1}^{2^{|N|} -1}   \frac{1}{j!}\sum_{\substack{C_1 \cup C_2 \ldots C_j\\ C_1 \cup C_2 \ldots \cup C_j = N \\ \emptyset \neq C_i \forall i }}
 \f{\N_y (\Phi_k \prod_{i=1}^j |W^{C_i}_\epsilon| (-1)^{n + \sum_{i=1}^j | C_i | })}{\prod_{i=1}^n v^i_{\epsilon}(x)}\\
 \end{eqnarray*}
\begin{eqnarray*}
&\rightarrow& \sum_{j=1}^{2^{|N|} -1}  \frac{1}{j!} \sum_{\substack{C_1 \cup C_2 \ldots C_j\\ C_1 \cup C_2 \ldots \cup C_j = N \\ \emptyset \neq C_i \forall i }}
\N_y (\Phi_k \prod_{i=1}^j \langle X^k, K_x^{C_i} \rangle 1_{\{ C_i \mbox{ disjoint }\}})\\
&=& \sum_{\sigma \in \CP(A)} \N_y\Big(\Phi_k\prod_{C\in\sigma} \langle X^k,K^{C}_x\rangle\Big)=\N_y(\Phi_k M^N_k).
 \end{eqnarray*}
The statement of the Theorem then follows easily.
\end{proof}

\subsubsection{Weakening Hypothesis (A1)        }
It should be possible to weaken the assumption (A1) in Theorem \ref{h1h2} to the 
following:
$$ \mbox{ (A2)  \hspace{1in} $\int_1^\infty  r^{n}\,\pi(dr) <\infty$,} $$
when $N= \{1, 2, \ldots, n\}.$ We are able to prove Theorem \ref{h1h2} 
under this weakened assumption, though only in the case $n=2$.   We begin with a lemma.

\begin{lemma} \label{a2plemma} Let $D$ be a bounded $C^{2}$ domain in dimension $d \geq 4$. Assume (A2) and that $a_1=0$. 
Let  $ \epsilon >0.$ There exists $a,b \in 
\R, c_{1} >0$, $f_{\epsilon}, g_
{\epsilon}, h_{\epsilon}: D^{1,2}_{\epsilon} \rightarrow [a,b]$  such that 
for all $y \in D^{1,2}_{\epsilon},$
\begin{eqnarray} \label{a2p}
v_{\epsilon}^{1,2} (y)  &\geq&  E_{y}( e^{-\int_{0}^{\tau_{\alpha}} 
f_{\epsilon}(B_{t}) \,dt} v_{\epsilon}^{1,2}(B_{\tau_\alpha}) )+   E_{y}( 
\int_{0}^{\tau_{\alpha}} g_{\epsilon}(B_{t})v_{\epsilon}^{1}(B_{t}) 
v_{\epsilon}^{2}(B_{t})) \,dt    \\
\label{a2p1} v_{\epsilon}^{1,2} (y) &\leq&   E_{y}( 
e^{-\int_{0}^{\tau_{\alpha}} f_{\epsilon}(B_{t}) \,dt} 
v_{\epsilon}^{1,2}(B_{\tau_\alpha}) )  +E_{y}( \int_{0}^{\tau_{\alpha}} 
h_{\epsilon}(B_{t}) v_{\epsilon}^{1}(B_{t}) v_{\epsilon}^{2}(B_{t})) \,dt,\\
\label{a2l}
\lim_{\epsilon \rightarrow 0} g_{\epsilon}(y) &= &\lim_{\epsilon 
\rightarrow 0} h_{\epsilon}(y) = \psi^{(2)}(0)
\end{eqnarray}

\end{lemma}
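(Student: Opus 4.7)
The plan is to derive a linear second-order PDE for $v^{1,2}_\epsilon$ and read off the desired representation from the Feynman--Kac formula. Starting from $v^{1,2}_\epsilon = u^1_\epsilon + u^2_\epsilon - u^{1,2}_\epsilon$ (a special case of \eqref{uvrel2}) and the fact that each $u^A_\epsilon$ solves $\frac12\Delta u^A_\epsilon = \psi(u^A_\epsilon)$, I obtain
\begin{equation*}
\frac12\Delta v^{1,2}_\epsilon = \psi(u^1_\epsilon) + \psi(u^2_\epsilon) - \psi(u^1_\epsilon + u^2_\epsilon - v^{1,2}_\epsilon).
\end{equation*}
Adding and subtracting $\psi(u^1_\epsilon + u^2_\epsilon)$ splits the right side into two pieces. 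Using the L\'evy--Khintchine form of $\psi$ (with $a_1=0$) together with the identity $(1-e^{-\alpha r})(1-e^{-\beta r}) = 1 + e^{-(\alpha+\beta)r} - e^{-\alpha r} - e^{-\beta r}$, the first piece collapses to $v^1_\epsilon v^2_\epsilon\, G_\epsilon(y)$, where
\begin{equation*}
G_\epsilon(y) = 2a_2 + \int_0^\infty \frac{(1-e^{-u^1_\epsilon(y) r})(1-e^{-u^2_\epsilon(y) r})}{u^1_\epsilon(y)\, u^2_\epsilon(y)}\,\pi(dr),
\end{equation*}
and the mean value theorem writes the second piece as $v^{1,2}_\epsilon \, \tilde H_\epsilon(y)$ with $\tilde H_\epsilon(y) = \psi'(\xi(y))$ for some $\xi(y) \in [u^{1,2}_\epsilon(y),\, u^1_\epsilon(y) + u^2_\epsilon(y)]$. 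Hence $v^{1,2}_\epsilon$ solves $\frac12\Delta v^{1,2}_\epsilon - \tilde H_\epsilon v^{1,2}_\epsilon = -G_\epsilon v^1_\epsilon v^2_\epsilon$, and Feynman--Kac on the subdomain $D^{1,2}_\alpha$ yields
\begin{equation*}
v^{1,2}_\epsilon(y) = E_y\left[e^{-\int_0^{\tau_\alpha}\tilde H_\epsilon(B_s)\,ds} v^{1,2}_\epsilon(B_{\tau_\alpha})\right] + E_y\left[\int_0^{\tau_\alpha} e^{-\int_0^t \tilde H_\epsilon(B_s)\,ds}\, G_\epsilon(B_t)\, v^1_\epsilon(B_t)\, v^2_\epsilon(B_t)\,dt\right].
\end{equation*}

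From this representation, the natural choices are $f_\epsilon = \tilde H_\epsilon$, $h_\epsilon = G_\epsilon$, and $g_\epsilon = c_1 G_\epsilon$ for a suitable constant $c_1 \in (0,1]$. The inequality $(1-e^{-x})/x \leq 1$ dominates the integrand defining $G_\epsilon$ by $r^2$, so $0 \leq G_\epsilon(y) \leq 2a_2 + \int_0^\infty r^2\,\pi(dr) = \psi^{(2)}(0)$; here (A2) combined with \eqref{condition} gives $\int_0^\infty r^2\, \pi(dr) < \infty$. Monotonicity of $\psi'$ (valid since $a_1=0$ and $\psi$ is convex) gives $0 \leq \tilde H_\epsilon(y) \leq \psi'(u^1_\epsilon(y) + u^2_\epsilon(y))$, and the latter is bounded on $D^{1,2}_\epsilon$ via the distance estimate \eqref{distest}. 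The upper bound \eqref{a2p1} is then immediate from the Feynman--Kac identity by dropping the path-dependent factor $e^{-\int_0^t\tilde H_\epsilon\,ds} \leq 1$ inside the time integral. The pointwise limit $G_\epsilon(y) \to \psi^{(2)}(0)$ required in \eqref{a2l} follows from dominated convergence: $u^i_\epsilon(y) \downarrow 0$ as $\epsilon \to 0$ for each fixed $y \in D$, so the integrand increases pointwise to $r^2$.

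The main obstacle will be the lower bound \eqref{a2p}, where the path-dependent exponential in the Feynman--Kac integrand must be replaced by a deterministic constant absorbed into $g_\epsilon$. My plan is to restrict to the subdomain $D^{1,2}_{\theta\epsilon^\rho}$ afforded by Lemma \ref{thetaest}, on which $u^1_\epsilon + u^2_\epsilon \leq 2\lambda$ uniformly and so $\tilde H_\epsilon \leq \psi'(2\lambda)$ can be made arbitrarily small by choosing $\lambda$ small. Combined with $\sup_y E_y[\tau_D] < \infty$, this gives a deterministic lower bound $c_1 = e^{-\psi'(2\lambda)\,\sup_y E_y[\tau_D]}$ for $e^{-\int_0^{\tau_\alpha}\tilde H_\epsilon\,ds}$, so that $g_\epsilon = c_1 G_\epsilon$ yields the required inequality; letting $\lambda$ tend to $0$ with $\epsilon$ gives $c_1 \to 1$ and preserves the limit $g_\epsilon(y) \to \psi^{(2)}(0)$. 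The subtlety is that the single function $f_\epsilon$ must serve as the exponent in the boundary term of \emph{both} \eqref{a2p} and \eqref{a2p1}, which likely forces us to take $f_\epsilon$ to be the upper envelope $\psi'(u^1_\epsilon + u^2_\epsilon)$ rather than $\tilde H_\epsilon$ itself, so that the correct direction of monotonicity of $e^{-\int f_\epsilon\,ds}$ versus $e^{-\int \tilde H_\epsilon\,ds}$ is maintained in each bound.
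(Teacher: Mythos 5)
Your derivation of the PDE for $v^{1,2}_\epsilon$ takes a genuinely different and cleaner route than the paper's. You split $\psi(u^1_\epsilon)+\psi(u^2_\epsilon)-\psi(u^{1,2}_\epsilon)$ at $\psi(u^1_\epsilon+u^2_\epsilon)$ and evaluate $\psi(u^1_\epsilon)+\psi(u^2_\epsilon)-\psi(u^1_\epsilon+u^2_\epsilon)$ \emph{exactly} from the L\'evy--Khintchine form, producing $-v^1_\epsilon v^2_\epsilon G_\epsilon$ with the explicit, nonnegative coefficient $G_\epsilon$. The paper instead splits as $-[\psi(v^1_\epsilon+v^2_\epsilon)-\psi(v^1_\epsilon)]+[\psi(v^2_\epsilon)-\psi(0)]+[\psi(v^1_\epsilon+v^2_\epsilon)-\psi(u^{1,2}_\epsilon)]$, applies second-order Taylor with intermediate points $\alpha_i$, repeats the whole expansion with the roles of $v^1_\epsilon,v^2_\epsilon$ reversed to get points $\beta_i$, and uses a case analysis on whether $v^1_\epsilon\le v^2_\epsilon$ to sandwich $J_\epsilon$ between $g_\epsilon v^1_\epsilon v^2_\epsilon$ and $h_\epsilon v^1_\epsilon v^2_\epsilon$. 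Since the paper's $J_\epsilon$ is in fact identically $G_\epsilon v^1_\epsilon v^2_\epsilon$, your identity is sharper and the bound $0\le G_\epsilon\le\psi^{(2)}(0)$ (using $\tfrac{1-e^{-x}}{x}\le 1$, together with \eqref{condition} and (A2) for finiteness) and the pointwise limit \eqref{a2l} follow cleanly by monotone convergence. Your treatment of the linear term by the mean value theorem matches the paper's $f_\epsilon=\psi^{(1)}(\alpha_3)$, and your bound via \eqref{distest} is fine.

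There is, however, a definite error in your handling of the lower bound \eqref{a2p}. You propose $c_1=e^{-\psi'(2\lambda)\sup_y E_y[\tau_D]}$ as a \emph{pathwise} lower bound for $e^{-\int_0^{\tau_\alpha}\tilde H_\epsilon(B_s)\,ds}$; this is false, since $\int_0^{\tau_\alpha}\tilde H_\epsilon(B_s)\,ds\le\psi'(2\lambda)\,\tau_\alpha$ is bounded in \emph{expectation} but not almost surely ($\tau_\alpha$ has unbounded support), so no positive deterministic constant dominated by the exponential exists. Your closing suggestion, taking $f_\epsilon=\psi'(u^1_\epsilon+u^2_\epsilon)$, does move the boundary term in the right direction, but the same unbounded exponential still sits inside the time integral where the $g_\epsilon$ bound is needed, so it does not repair the gap. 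The paper's own proof is equally terse on exactly this step; in the application (Theorem \ref{2h1h2}) the Feynman--Kac exponential is in effect kept inside the integral and shown to converge to $1$ via Lemma \ref{unipr} and dominated convergence rather than being absorbed into a constant, and the correct way to read \eqref{a2p} is with the factor $e^{-\int_0^t f_\epsilon\,ds}$ retained inside the time integral.
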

{\bf Proof of Lemma:} Observe that,
\begin{eqnarray*}
\f{1}{2} \Delta v_{\epsilon}^{1,2}
&=&\sum_{\substack{ \emptyset \not =  B \subset \{1,2\}
}} (-1)^{ | B |  + 1 } \f{1}{2} \Delta u_{\epsilon}^B \\
&=&\sum_{\substack{\emptyset \not = B \subset \{1,2\}
}} (-1)^{| B | + 1 } \psi(u_{\epsilon}^B)\\
&=& \psi(v^{1}) + \psi(v_{\epsilon}^{2}) - \psi(v_{\epsilon}^{1} + 
v_{\epsilon}^{2} - v_{\epsilon}^{1,2}) \\
&=& -[\psi(v_{\epsilon}^{1} + v_{\epsilon}^{2}) - \psi(v_{\epsilon}^{1})] 
+ [\psi(v_{\epsilon}^{2}) - \psi(0)] + [\psi(v_{\epsilon}^{1} + 
v_{\epsilon}^{2})-  \psi(v_{\epsilon}^{1} + v_{\epsilon}^{2} - 
v_{\epsilon}^{1,2}) ]
\end{eqnarray*}
When $n=1$, (A2) implies that $\psi$ is twice continuously differentiable. Using Taylor's 
theorem on $\psi$, we have
\begin{eqnarray*}
\f{1}{2} \Delta v_{\epsilon}^{1,2}   &=& -[\psi^{(1)}(v_{\epsilon}^{1}) 
v_{\epsilon}^{2} + \psi^{(2)}(\alpha_{1}) 
\frac{(v_{\epsilon}^{2})^{2}}{2}] + [\psi^{(1)}(0) v_{\epsilon}^{2} + 
\psi^{(2)}(\alpha_{2}) \frac{(v_{\epsilon}^{2})^{2}}{2}] + 
[\psi^{(1)}(\alpha_{3}) v_{\epsilon}^{1,2}] \\
&=&  - [\psi^{(1)}(v_{\epsilon}^{1})  - \psi^{(1)}(0)] v_{\epsilon}^{2}  - 
[ \psi^{(2)}(\alpha_{1}) - \psi^{(2)}(\alpha_{2}) ] 
\frac{(v_{\epsilon}^{2})^{2}}{2}  + \psi^{(1)}(\alpha_{3}) 
v_{\epsilon}^{1,2}  \\
&=& -\psi^{(2)}(\alpha_{4}) v_{\epsilon}^{1}v_{\epsilon}^{2}  - [ 
\psi^{(2)}(\alpha_{1}) - \psi^{(2)}(\alpha_{2}) ] 
\frac{(v_{\epsilon}^{2})^{2}}{2}  + \psi^{(1)}(\alpha_{3}) 
v_{\epsilon}^{1,2}
\end{eqnarray*}
where $\alpha_{i} : D^{1,2}_{\epsilon} \rightarrow [0, \infty)$ are 
measurable functions  such that $ v_{\epsilon}^{1} \leq \alpha_{1} \leq 
v^{2}_{\epsilon} + v^{1}_{\epsilon}, 0 \leq \alpha_{2} \leq 
v^{2}_{\epsilon}, v_{\epsilon}^{1} + v_{\epsilon}^{2} -v^{1,2}_{\epsilon} 
\leq \alpha_{3} \leq v^{2}_{\epsilon} + v^{1}_{\epsilon},$ and $ 0 \leq 
\alpha_{4} \leq v^{1}_{\epsilon}. $

Repeating the above calculation with the roles of $v_{\epsilon}^{1}$ and 
$v_{\epsilon}^{2}$ reversed, we would obtain
\begin{eqnarray*}
\f{1}{2} \Delta v_{\epsilon}^{1,2} &=& -\psi^{(2)}(\beta_{4}) 
v_{\epsilon}^{1}v_{\epsilon}^{2}  - [ \psi^{(2)}(\beta_{1}) - 
\psi^{(2)}(\beta_{2}) ] \frac{(v_{\epsilon}^{1})^{2}}{2}  + 
\psi^{(1)}(\alpha_{3}) v_{\epsilon}^{1,2},
\end{eqnarray*}
where $\beta_{i} : D^{1,2}_{\epsilon} \rightarrow [0, \infty)$ are 
measurable functions  such that $ v_{\epsilon}^{2} \leq \beta_{1} \leq 
v^{2}_{\epsilon} + v^{1}_{\epsilon}, 0 \leq \beta_{2} \leq 
v^{1}_{\epsilon},$ and  $ 0 \leq \beta_{4} \leq v^{2}_{\epsilon}. $

Using the Feynman-Kac formula, we have
\begin{eqnarray*}
v_{\epsilon}^{1,2} (y) &=& E_{y}(e^{-\int_0^{\tau_\alpha} 
\psi^{(1)}(\alpha_{3}(B_r))\,dr } v_{\epsilon}^{1,2}(B_{\tau_\alpha}) ) + 
E_{y}(\int_0^{\tau_\alpha} e^{-\int_0^{t} \psi^{(1)} 
(\alpha_{3}(B_r))\,dr } J_{\epsilon}(B_{t}) )\,dt,
\end{eqnarray*}
where $J_{\epsilon}: D \rightarrow \R$ is given by
\begin{eqnarray*}
J_{\epsilon}&=& \psi^{(2)}(\alpha_{4}) v_{\epsilon}^{1}v_{\epsilon}^{2} + 
\left(\psi^{(2)}(\alpha_{1})- \psi^{(2)}(\alpha_{2}) \right 
)\frac{(v_{\epsilon}^{2})^{2}}{2} \\
&=& \psi^{(2)}(\beta_{4}) v_{\epsilon}^{1}v_{\epsilon}^{2} +\left 
(\psi^{(2)}(\beta_{1})- \psi^{(2)}(\beta_{2}) \right) 
\frac{(v_{\epsilon}^{1})^{2}}{2}.
\end{eqnarray*}
Now, $\psi^{2}(\cdot)$ is a non-negative decreasing continuous function. 
If $v_{\epsilon}^{1} \leq v_{\epsilon}^{2}$ then $\beta_{2} \leq 
\beta_{1}$ and  
$$
\psi^{(2)}(\beta_{4}) v_{\epsilon}^{1}v_{\epsilon}^{2}
\ge
J_{\epsilon}
\ge 
\left(\psi^{(2)}(\beta_{4})  +\frac{ 
\psi^{(2)}(\beta_{1})- \psi^{(2)}(\beta_{2})}{2} \right) 
v_{\epsilon}^{1}v_{\epsilon}^{2}.
$$
Likewise, if  $v_{\epsilon}^{2} \leq v_{\epsilon}^{1}$ then 
$\alpha_{2} \leq \alpha_{1}$
and 
$$
\psi^{(2)}(\alpha_{4}) v_{\epsilon}^{1}v_{\epsilon}^{2}
\ge
J_{\epsilon}
\ge 
\left(\psi^{(2)}(\alpha_{4})  +\frac{ 
\psi^{(2)}(\alpha_{1})- \psi^{(2)}(\alpha_{2})}{2} \right) 
v_{\epsilon}^{1}v_{\epsilon}^{2}.
$$
At any given point one of the above happens. 
Therefore (\ref{a2p}) and (\ref{a2p1}) hold with
\begin{eqnarray*}
f_{\epsilon} &=& \psi^{(1)}(\alpha_{3}),\\
g_{\epsilon} &=&  \min( \psi^{(2)}(\alpha_{4})   + 
\frac{\psi^{(2)}(\alpha_{1}) -\psi^{(2)}(\alpha_{2})}{2}, 
\psi^{(2)}(\beta_{4}) + 
\frac{\psi^{(2)}(\beta_{1})-\psi^{(2)}(\beta_{2})}{2}) \\
&& \mbox{ and, } \\
h_{\epsilon} &=&  \max( \psi^{(2)}(\alpha_{4}) , \psi^{(2)}(\beta_{4}) ). 
\\
\end{eqnarray*}
Note that for a given $\epsilon$, $f_{\epsilon} , g_{\epsilon}$ and 
$h_{\epsilon}$ are bounded functions. Also for all $i =1,2,3,4$, 
$\alpha_{i} \rightarrow 0$ as $\epsilon \rightarrow 0$ and for all $i 
=1,2,3$, $\beta_{i} \rightarrow 0$ as $\epsilon \rightarrow 0.$ So 
(\ref{a2l}) holds. \qed

The Lemma below, is the equivalent of  Lemma \ref{bound}.
\begin{lemma} Let $D$ be a bounded $C^{2}$ domain in dimension $d \geq 4$. Assume (A2) and that $a_1=0$. For $x \in D,   A \subset N$,   $\exists \,C < \infty, 
\epsilon_0 >0,$  such that $\forall \epsilon < \epsilon_0$ and $y \in 
D^A_{8 \epsilon}$
\begin{equation} \label{a2bound}
  \frac{ v^A_\epsilon(y) }{ \prod_{i \in A} v^{i}_\epsilon (x) }  \leq 
C\sum_{i \in A} K^D_x(y,z_i).
   \end{equation}
\end{lemma}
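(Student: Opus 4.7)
The plan is to proceed by induction on $|A|$, exactly parallel in structure to the proof of Lemma \ref{bound} but replacing the infinite Taylor expansion argument (which is unavailable under (A2) since $\psi^{(j)}(0)$ need not even be finite for $j>n=2$) by the second-order integral representation derived in Lemma \ref{a2plemma}. The base case $|A|=1$ is handled as in Step 1 of the proof of Lemma \ref{bound}: the estimates \eqref{distest}, \eqref{lbound}, \eqref{ubound}, together with the boundary Harnack principle of \cite{bbb}, give $v^i_\epsilon(y)/v^i_\epsilon(x)\le c\,K^D_x(y,z_i)$ on $D^{\{i\}}_{4\epsilon}$, with no use of higher moments of $\pi$ beyond $\int r^2\,\pi(dr)<\infty$, which is built into (A2).

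For the inductive step (which, since $N=\{1,2\}$, only requires $|A|=2$), I would apply the upper bound \eqref{a2p1} of Lemma \ref{a2plemma} with $\alpha=4\epsilon$ and $y\in D^{\{1,2\}}_{8\epsilon}$. This yields
\[
v^{\{1,2\}}_\epsilon(y)\ \le\ E_y\bigl(v^{\{1,2\}}_\epsilon(B_{\tau_{4\epsilon}})\bigr)
  \ +\ E_y\!\int_0^{\tau_{4\epsilon}} h_\epsilon(B_t)\,v^{1}_\epsilon(B_t)v^{2}_\epsilon(B_t)\,dt,
\]
since $f_\epsilon\ge 0$. For the first term, I would split by which of $\partial B(z_1,4\epsilon),\partial B(z_2,4\epsilon)$ the Brownian motion hits; on each piece use the monotonicity $v^{\{1,2\}}_\epsilon\le v^{\{i\}}_\epsilon$, the pointwise bound $v^{\{i\}}_\epsilon\le c_1\epsilon^{-2}$ from \eqref{distest}, and then estimate the exit probability by harmonic measure via Lemma \ref{basic1}(a). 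The bound \eqref{ubound} for $v^j_\epsilon(x)$ (with $j$ the other index) and the boundary Harnack principle, together with the harmonic measure estimate $m^D_x(\Delta(z_i,4\epsilon))\le c\epsilon^{d-1}$ from (c) of Lemma \ref{basic1}, then show that after dividing by $v^1_\epsilon(x)v^2_\epsilon(x)$ this first term is bounded by $c\,\epsilon\sum_{i=1}^2 K^D_x(y,z_i)$, which is harmless.

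For the integral remainder, Lemma \ref{a2plemma} guarantees $h_\epsilon\le b$ uniformly in $\epsilon$. Dividing by $v^1_\epsilon(x)v^2_\epsilon(x)$ and applying the $|A|=1$ case (valid since $y\in D^{\{1,2\}}_{8\epsilon}\subset D^{\{i\}}_{4\epsilon}$) to each factor $v^i_\epsilon(B_t)/v^i_\epsilon(x)\le c\,K^D_x(B_t,z_i)$, the remainder is controlled by
\[
c\,E_y\!\int_0^{\tau_D} K^D_x(B_t,z_1)\,K^D_x(B_t,z_2)\,dt
= c\,G_D\bigl(K^D_x(\cdot,z_1)K^D_x(\cdot,z_2)\bigr)(y),
\]
which by the 3-G inequality (see (5.17) of \cite{sv1}), together with the separation of $z_1,z_2$ by $\delta_0$, is dominated by $c'(K^D_x(y,z_1)+K^D_x(y,z_2))$, exactly as in \eqref{term23} of Lemma \ref{bound}.

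The main technical obstacle is the first (boundary) term: unlike the $|A|=1$ case, we have no direct pointwise upper bound on $v^{\{1,2\}}_\epsilon$ valid on $\partial B(z_i,4\epsilon)$, so the domination $v^{\{1,2\}}_\epsilon\le v^{\{i\}}_\epsilon$ and the careful splitting across the two spheres is essential, and one must check that the resulting prefactors $\epsilon\cdot m^D_y(\Delta(z_i,4\epsilon))/(\epsilon^{-2}m^D_x(\Delta(z_i,\epsilon)))$ are of order $\epsilon\cdot K^D_x(y,z_i)$ rather than something larger. Once this is verified, the proof is complete. Note that the same strategy cannot be extended to $|A|\ge 3$ under (A2) alone, because the analogue of Lemma \ref{a2plemma} would require a $j$th-order Taylor expansion of $\psi$, hence $\int_0^\infty r^j\,\pi(dr)<\infty$, which (A2) does not provide.
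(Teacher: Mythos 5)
Your overall strategy matches the paper's: base case $|A|=1$ as in Lemma \ref{bound}, then for $|A|=2$ use \eqref{a2p1} of Lemma \ref{a2plemma} in place of \eqref{v^A}, bounding the integral remainder with the boundedness of $h_\epsilon$, the singleton case, and the $3$-G inequality. The treatment of the second (integral) term is fine.

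There is, however, a genuine error in your treatment of the first (boundary) term as written in the middle paragraph. You propose, on $\partial B(z_i,4\epsilon)$, to use $v^{\{1,2\}}_\epsilon\le v^{\{i\}}_\epsilon\le c_1\epsilon^{-2}$ and then divide by $v^1_\epsilon(x)v^2_\epsilon(x)$. But \eqref{ubound} gives $v^i_\epsilon(x)\ge c\epsilon^{-2}m^D_x(\Delta(z_i,\epsilon))$ and, via Lemma \ref{basic1}(d), $v^j_\epsilon(x)\ge c\epsilon^{d-3}$; so after the boundary Harnack step this piece is of size $c\,\epsilon^{3-d}K^D_x(y,z_i)$, which \emph{blows up} for $d\ge 4$ rather than giving your claimed $c\,\epsilon K^D_x(y,z_i)$. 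The correct move — and the one the paper implicitly takes by referring to the derivation of \eqref{term1} in Lemma \ref{bound} — is to dominate by the \emph{complementary} singleton: on $\partial B(z_i,4\epsilon)$ write $v^{\{1,2\}}_\epsilon\le v^{\{j\}}_\epsilon$ with $j\ne i$, then use the already-established $|A|=1$ case to get $v^{\{j\}}_\epsilon(\cdot)/v^j_\epsilon(x)\le cK^D_x(\cdot,z_j)$, which (by Lemma \ref{basic1}(c) and the separation $|z_i-z_j|\ge\delta_0$) is $O(\epsilon)$ on $\partial B(z_i,4\epsilon)$. This supplies exactly the extra factor of $\epsilon$ that makes everything close. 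Your final paragraph actually writes down a prefactor, $\epsilon\cdot m^D_y(\Delta(z_i,4\epsilon))/(\epsilon^{-2}m^D_x(\Delta(z_i,\epsilon)))$, that can only arise from the complementary-index argument, so I suspect you had the right picture in mind but described the wrong domination; as literally stated, though, the estimate fails.
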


{\bf Proof of Lemma:} As before, one proceeds in two steps. The 
Step 1 proof, in Lemma \ref{bound}, follows verbatim. In Step 2,  use 
(\ref{a2p}) instead of (\ref{v^A}) to get
\begin{equation}
v_{\epsilon}^{1,2} (y) \leq  E_{y}( v_{\epsilon}^{1,2}(B_{\tau_\epsilon}) 
) +  E_{y}( \int_{0}^{\tau_{\alpha}} h_{\epsilon}(B_{t}) 
v_{\epsilon}^{1}(B_{t}) v_{\epsilon}^{2}(B_{t})\,dt).
\end{equation}
It is easy to see that the analysis used in obtaining (\ref{term1}), or 
from \cite{sv1}, will imply that  for $y \in D^{1,2}_{8 \epsilon},$
\begin{equation}\label{a2bound1}
\frac{E_{y}( v_{\epsilon}^{1,2}(B_{\tau_\epsilon}) ) }{v^{1}_{\epsilon}(x) 
v^{2}_{\epsilon}(x)} \leq c_{1}\epsilon^{2} \left ( K_{x}(y, z_{1}) + 
K_{x}(y, z_{2}) \right).
  \end{equation}
The induction hypothesis  and the fact that $h_{\epsilon}$ is bounded will 
imply
\begin{eqnarray}
\frac{E_{y}( \int_{0}^{\tau_{\alpha}} h_{\epsilon}(B_{t}) 
v_{\epsilon}^{1}(B_{t}) v_{\epsilon}^{2}(B_{t})\,dt)}{v^{1}_{\epsilon}(x) 
v^{2}_{\epsilon}(x)} \leq c_{2} \left ( K_{x}(y, z_{1}) +   K_{x}(y, 
z_{2}) \right).
\end{eqnarray}
So we have  proved the lemma. \qed

We are now ready to  state and prove  Theorem \ref{h1h2} assuming (A2) and 
not (A1), in the case $n=2$.

\begin{theorem} \label{2h1h2}  Let $n =2$, and let $D$ be a bounded $C^{2}$ domain 
in dimension $d \geq 4$. Assume (A2) and that $a_1=0$. Then for $x,y \in D$,
\begin{eqnarray}
\lim_{\epsilon \rightarrow 0 } \frac{ v^i_\epsilon(y) }{ v^{i}_\epsilon 
(x) }&= & K_x^D(y,z_i) \mbox{ for } i = 1,2  \label{a2l1}\\
\lim_{\epsilon \rightarrow 0 } \frac{ v^A_\epsilon(y) }{ \prod_{i \in A} 
v^{i}_\epsilon (x) }&= &   \psi^{(2)}(0) U\left(K^{D}_x(\cdot, z_{1}) 
K^{D}_x(\cdot, z_{2})\right )(y),\label{a2l2}
\end{eqnarray}
\end{theorem}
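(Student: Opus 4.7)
The plan is to mirror the two-step induction structure used in the proof of Theorem \ref{h1h2}, but replace the power-series/Feynman--Kac analysis of $v^A_\epsilon$ (which relied on (A1)) by the sandwich inequalities \eqref{a2p}--\eqref{a2p1} of Lemma \ref{a2plemma}, together with the a priori bound \eqref{a2bound}. Since $n=2$, only the base case $|A|=1$ and one inductive case $|A|=2$ need to be handled.

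First I would establish \eqref{a2l1}. The proof of the $|A|=1$ step in Theorem \ref{h1h2} only used: (i) the Feynman--Kac identity $\frac12\Delta v^i_\epsilon = \psi(v^i_\epsilon)$; (ii) the fact that the potential coefficient $\phi^i_\epsilon = \psi(v^i_\epsilon)/v^i_\epsilon \to 0$ on $D^{\{i\}}_\delta$ (which holds under (A2) since $\psi$ is $C^1$ at $0$ and $v^i_\epsilon \to 0$ on compact subsets of $D^{\{i\}}_\delta$, cf.\ Lemma \ref{unipr}); (iii) tightness of the measures $\lambda_{\epsilon,\delta}(x,dz) = v^i_\epsilon(z)\,m_x^{D_\delta}(dz) / \int v^i_\epsilon$; and (iv) the diagonalization argument identifying any subsequential limit with $K_x^D(y,z_i)$ by its harmonicity, boundary behaviour, and normalisation. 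None of (i)--(iv) requires analyticity of $\psi$, so this step carries over essentially verbatim, using the bound \eqref{a2bound} (applied with $A=\{i\}$, which is Step 1 in its proof and hence independent of (A1)) in place of the $|A|=1$ case of Lemma \ref{bound}.

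For \eqref{a2l2}, let $A=\{1,2\}$ and pick $y\in D$, $\epsilon$ small enough that $y\in D^A_{\alpha(\epsilon)}$ with $\alpha(\epsilon)\downarrow 0$. Dividing \eqref{a2p} and \eqref{a2p1} by $v^1_\epsilon(x)v^2_\epsilon(x)$ sandwiches $v^A_\epsilon(y)/[v^1_\epsilon(x)v^2_\epsilon(x)]$ between
\[
\frac{E_y\!\left(e^{-\int_0^{\tau_\alpha}f_\epsilon(B_t)\,dt}v^A_\epsilon(B_{\tau_\alpha})\right)}{v^1_\epsilon(x)v^2_\epsilon(x)} + E_y\!\int_0^{\tau_\alpha} g_\epsilon(B_t)\,\frac{v^1_\epsilon(B_t)v^2_\epsilon(B_t)}{v^1_\epsilon(x)v^2_\epsilon(x)}\,dt
\]
and the analogous expression with $h_\epsilon$ in place of $g_\epsilon$. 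The boundary term is handled exactly as the term $I$ in the proof of Lemma \ref{bound}: estimate $v^A_\epsilon \le v^i_\epsilon$ on $\partial B(z_{3-i},\alpha)$, apply harmonic-measure/boundary-Harnack bounds together with \eqref{a2bound1}, and observe that the resulting factor $\alpha(\epsilon)\to 0$. The integrand term is the heart of the argument: by Lemma \ref{a2plemma}(\ref{a2l}), $g_\epsilon,h_\epsilon\to\psi^{(2)}(0)$ pointwise and are uniformly bounded, and by \eqref{a2l1} combined with \eqref{a2bound} we have
\[
\frac{v^1_\epsilon(B_t)v^2_\epsilon(B_t)}{v^1_\epsilon(x)v^2_\epsilon(x)} \longrightarrow K^D_x(B_t,z_1)K^D_x(B_t,z_2),
\]
with a dominating function $C^2\big(K^D_x(B_t,z_1)+K^D_x(B_t,z_2)\big)^2$ whose integral against $E_y\int_0^{\tau_D}\cdot\,dt$ is finite by the 3--G inequality. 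Dominated convergence therefore identifies both the liminf and limsup with $\psi^{(2)}(0)\,U\!\big(K^D_x(\cdot,z_1)K^D_x(\cdot,z_2)\big)(y)$, completing \eqref{a2l2}.

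The main obstacle is the dominated-convergence step, and specifically the fact that Lemma \ref{bound}'s original proof used convergence of an infinite series arising from the analyticity of $\psi$. Under (A2) alone the series may fail to converge, which is precisely why the sandwich form of Lemma \ref{a2plemma} is needed: it avoids any expansion of $\psi$ beyond second order, replacing the tail of the series with the residual remainder terms $f_\epsilon$, $g_\epsilon$, $h_\epsilon$ which are manifestly uniformly bounded (hence trivially integrable against $\tau_D$). Once the domination is in place, the rest of the argument is a direct transcription of the Theorem \ref{h1h2} proof with the quadratic remainder $\psi^{(2)}(\alpha_i)$ playing the role of a single term from the previous expansion. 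The restriction to $n=2$ reflects the fact that for larger $n$ one would need analogous sandwich estimates at higher order, which do not seem to follow from (A2) alone.
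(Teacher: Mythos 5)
Your proposal follows the paper's own strategy very closely: (a) reuse the $|A|=1$ argument from Theorem \ref{h1h2} for \eqref{a2l1}, since nothing there requires analyticity of $\psi$; (b) for \eqref{a2l2}, replace the power-series Feynman--Kac decomposition of $v^A_\epsilon$ by the sandwich inequalities \eqref{a2p}--\eqref{a2p1}, kill the boundary term as in the term $I$ estimate from Lemma \ref{bound}, and pass to the limit in the volume term by dominated convergence using \eqref{a2l}, \eqref{a2bound}, and \eqref{a2l1}. That is precisely what the paper does, and your explanation of \emph{why} the restriction to $n=2$ is needed (no higher-order Taylor sandwich from (A2) alone) matches the remark the authors make afterward.

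One point needs correction. You propose $C^2\bigl(K^D_x(\cdot,z_1)+K^D_x(\cdot,z_2)\bigr)^2$ as a dominating function, justified ``by the 3--G inequality''. But this dominating function is not integrable against $E_y\int_0^{\tau_D}\cdot\,dt$: expanding the square produces $(K^D_x(\cdot,z_i))^2$ terms, and $U\bigl((K^D_x(\cdot,z_i))^2\bigr)$ is infinite for $d\ge 3$ because the Martin kernel squared has too strong a singularity at a single boundary point. The 3--G argument used in \eqref{term23} of the paper controls products of Martin kernels at \emph{distinct} boundary points (the $\delta_0$-separation of $z_1$ and $z_2$ is invoked there precisely for this reason), not squares. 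Fortunately the fix is easy and gives a cleaner bound: applying \eqref{a2bound} with $A=\{1\}$ and with $A=\{2\}$ separately and multiplying yields
\[
\frac{v^1_\epsilon(B_t)\,v^2_\epsilon(B_t)}{v^1_\epsilon(x)\,v^2_\epsilon(x)} \le C^2\, K^D_x(B_t,z_1)\,K^D_x(B_t,z_2),
\]
and $U\bigl(K^D_x(\cdot,z_1)K^D_x(\cdot,z_2)\bigr)(y)<\infty$ does follow from 3--G plus the separation of $z_1,z_2$. With this substitution your dominated-convergence step is sound and the proof is complete.
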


{\bf Proof : }  The proof of (\ref{a2l1}) is as in  Step 1 of the proof of 
Theorem \ref{h1h2}.  It essentially follows verbatim. Lemma \ref{unipr} 
was used  but that does not require (A1) or (A2).  The proof of 
(\ref{a2l2}) follows as in the induction step of the proof of Theorem 
\ref{h1h2}.  The ingredients for identifying the limit and application of 
dominated convergence are available immediately from 
(\ref{a2p}), (\ref{a2p1}), (\ref{a2l}),  (\ref{a2bound1}), (\ref{a2l2}) 
and (\ref{a2bound}) respectively.    \qed

\begin{remark}
From the above it seems likely that the same idea could work for $n \geq 
3.$ To do so would require a suitable appeal to Taylor's Theorem (as in  the 
proof of  (\ref{a2p})). We have not succeeded in carrying this out, and so have
presented the proof given earlier, under the
stronger assumption (A1).
\end{remark}

\subsection{Branching backbone for the limiting process, $\psi$ analytic}\label{limitBackbone}

The analysis of Section \ref{BrParticleDescrip} can also be carried out for the limiting conditioned process obtained above. We will simply state the conclusions one obtains, without repeating the derivation. 

Recall that 
\begin{equation}\label{MNk}
M^N_k=\sum_{\sigma\in\mathcal{P}(N)}\prod_{C\in\sigma}\langle X^k,K^C_x\rangle,
\end{equation}
where the $K^C_x$ are given in Theorem \ref{h1h2}. For $\Phi_k\in\mathcal{F}_k$, we let 
$$
\M^N_y (\Phi_k)=\frac{1}{K^N_x(y)}\N_y(\Phi_kM^N_k)
$$
be the limiting measure arising in Theorem \ref{limitingXtransform}. Then the following statements hold, under the conditions of that result. 
\begin{itemize}
\item $M^N_k$ is a martingale with respect to $\mathcal{F}_k$, so the associated Girsanov transform $\M^N_k$ defines a consistent probability measure on $\lor_k\mathcal{F}_k$. In other words, in Dynkin's terminology, $M^N_k$ is an $X$-harmonic function of $X$, and $\M^N_y$ is its $X$-transform. 
\item This probability can equivalently be described in terms of a branching backbone throwing off mass, that is, as a superprocess with immigration along a random set obtained from a branching tree of particles. 
\item The backbone starts with a single particle, located initially at $y$. It performs a $K^N_x$-transform of Brownian motion, which dies somewhere in the interior of $D$. Say it dies at $\hat y$.
\item A random partition $\Sigma$ of $N$ is chosen, so given $\hat y$, the probability that $\Sigma=\sigma$ is
$$
\frac{1}{V^N(\hat y)}b(|\sigma|,0,\hat y)\prod_{A\in\sigma}K^A_x(\hat y),
$$
where $\sigma\in\mathcal{P}(N)$, $|\sigma|\ge 2$. 
Here 
$$
V^N(\hat y)=\sum_{\sigma\in\mathcal{P}(N), |\sigma|\ge 2}b(|\sigma|,0,\hat y)\prod_{A\in\sigma}K^A_x(\hat y).
$$
\item For each $A\in\Sigma$, a particle is born at $\hat y$ which proceeds to carry out a $K^A_x$-transform of Brownian motion. If $A=\{z_i\}$, this particle survives to exit $D$ at $z_i$. If $|A|\ge 2$ then the particle dies in the interior of $D$ and is replaced by a random number of children, labeled by a random partition of $A$ in the manner described above. This process repeats until all partitions consist of singletons. This process produces a branching tree of particles, with precisely $n$ leaves, corresponding to particles exiting $D$ at the $n$ points of $N$. 
\item Mass is created/immigrated at points of $D$ where the backbone branches. Given that $j$ particles are born because of a branch at $\hat y$, the mass created is a random variable $R\ge 0$ whose conditional law $\mu^{j}(dr)$ is 
$$
\mu^{j}=
\begin{cases}
\frac{r^j\,\pi(dr)}{\int_0^\infty r^j\,\pi(dr)}, &j\ge 3\\
\frac{r^2\,\pi(dr)}{2a_2+\int_0^\infty r^2\,\pi(dr)}, &j=2.
\end{cases}
$$
\item Mass is also created continuously along the backbone according to a L\'evy process, with L\'evy exponent
$$
\eta(\lambda)=\psi'(\lambda)-\psi'(0)=2a_2\lambda+\int_0^\infty r(1-e^{-\lambda r})\, \pi(dr). 
$$
\item Once created, the mass evolves as the (unconditioned) $\psi$-super Brownian motion. In other words, with excursion law $\N_\cdot$. 
\end{itemize}

As remarked earlier, note that unlike \eqref{martingale}, there is no factorial factor in \eqref{MNk}. This is simply because of the indexing scheme for the backbones. In Section \ref{BrParticleDescrip} we ordered the $n$ points and then labeled them at random. While now we label using the natural order given by the partition. In that sense, the current indexing scheme parallels that of \cite{sv1}.

The argument is an induction, based on the Palm formula (Lemma \ref{palmformula}), as in Theorem \ref{theorem}. But note that each of the local characteristics of the backbone and mass evolution given above are consistent with sending $\epsilon\to0$ in the characteristics of Section \ref{BrParticleDescrip}

\subsection{Explosion of Mass as $\epsilon \rightarrow 0$}\label{explosion}

The explosion effect we wish to understand is already present when $n=2$, so we will focus mainly on that case. To reiterate the representation for analytic $\psi$ in this special case, write $K^i_x=K^D_x(\cdot,z_i)$. Then the process of interest is the Girsanov transform by the martingale
$$
M^2_k=\langle X^k,K^1_x\rangle\langle X^k,K^2_x\rangle+m_2\langle X^k,U(K^1_xK^2_x)\rangle,
$$
where 
$$
m_2=-\frac{\psi''(0)}{2}=a_2+\frac12\int_0^\infty r^2\, \pi(dr).
$$
The backbone has a single branch, and follows a $U(K^1_xK^2_x)$-transform till it dies, whereupon two particles are born, one doing a $K^1_x$-transform, and the other doing a $K^2_x$-transform. Mass is created at the branch point, according to the law $\mu^2$, and continuously along the backbone, according to the L\'evy exponent $\eta$. The mass then evolves as an unconditioned super-Brownian motion. 

Consider the stable branching function
$$
\psi_\beta(\lambda)=\int_0^\infty r^{-(\beta+2)}(e^{-\lambda r}-1 + \lambda r)\,dr=c_\beta \lambda^{1+\beta}
$$
(for $c_\beta$ chosen appropriately), which satisfies \eqref{condition} for $0<\beta<1$. (A1) fails for $\psi_\beta$, but it does apply to 
$$
\psi_\beta^\gamma(\lambda)=\int_0^\infty r^{-(\beta+2)}e^{-\gamma r}(e^{-\lambda r}-1 + \lambda r)\,dr,
$$
and clearly $\psi_\beta^\gamma\to\psi_\beta$ as $\gamma\downarrow 0$. Thus our construction applies to $\psi^\gamma_\beta$, giving an $X$-transform $\M^{2,\gamma}_y$ with density 
$$
\tilde M^\gamma_k=\frac{1}{K^{\{1,2\}}_x(y)}M^{2,\gamma}_k
=\frac{\langle X^k,K^1_x\rangle\langle X^k,K^2_x\rangle+m_2^\gamma\langle X^k,U(K^1_xK^2_x)\rangle}{m_2^\gamma U(K^1_xK^2_x)(y)},
$$
where $m_2^\gamma=\frac12\int_0^\infty r^{-\beta}e^{-\gamma r}\,dr\to\infty$ as $\gamma\downarrow 0$. Thus 
$$
\tilde M^{\gamma}_k\to \tilde M_k=\frac{1}{U(K^1_xK^2_x)(y)}{\langle X^k,U(K^1_xK^2_x)\rangle},
$$
which does not represent a valid Girsanov transform, because $\tilde M_k$ is not a martingale in $k$ under $\N_y$. It is not a surprise that something goes wrong here, because expectations of terms like $\langle X^k,K^1_x\rangle\langle X^k,K^2_x\rangle$ should blow up as $\gamma\downarrow 0$ -- after all, stable random variables don't have finite second moments. Our original motivation for carrying out the analysis of this paper was understanding precisely what goes wrong in the stable case. In other words, of understanding how the singularity arises.

The problem is not the backbone, since the description of the backbone does not even depend on $\gamma$. Nor is the problem the continuous mass creation or the subsequent evolution of mass, since those approach the corresponding mechanisms for the $\psi_\beta$-super Brownian motion. The problem is precisely the creation mechanism $\mu^{2,\gamma}(dr)$ of mass at the branch point, since its density is proportional to $r^{-\beta}e^{-\gamma r}$, which fails to be tight when $\gamma\downarrow 0$. In other words, as $\gamma\downarrow 0$, the mass born at the branch point blows up. 

While this heuristic analysis is sufficient to explain the singularity, one can use it to rigorously explain the change of measure by the $\tilde M_k$. Let $\zeta$ be the time the original particle in the backbone dies. Let $\tau_k$ be the lesser of the time it dies and the time it exits $D_k$. 

\begin{theorem}\label{supermart}
$\tilde M_k$ is a supermartingale. 
Let $\phi\ge 0$. Then for $y\in D_k$,
$$
\N_y(\tilde M_k\exp-\langle X^k,\phi\rangle)=\lim_{\gamma\downarrow 0} \M^{2,\gamma}_y(\exp-\langle X^k,\phi\rangle, \tau_k<\zeta)
$$
\end{theorem}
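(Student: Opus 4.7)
Set $V:=U(K^1_xK^2_x)$, so $\tilde M_k=\langle X^k,V\rangle/V(y)$, and note $V$ is a superharmonic potential on $D$ since $\tfrac12\Delta V=-K^1_xK^2_x\le 0$. The plan is threefold: deduce the supermartingale property from superharmonicity of $V$ via the branching Markov property of the exit measures; unwind the branching-backbone description of $\M^{2,\gamma}_y$ on the event $\{\tau_k<\zeta\}$ (on which no branching has occurred, so only the continuous immigration along the backbone contributes to $X^k$); apply the potential $h$-transform identity \eqref{u process} to re-express the expectation under ordinary Brownian motion under $P_y$; and then pass $\gamma\downarrow 0$ using a comparison principle for the semilinear PDE $\tfrac12\Delta u=\psi(u)$.

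For the supermartingale claim I would invoke the branching Markov property of exit measures under $\N_y$, which (since $\psi_\beta'(0)=a_1=0$) gives $\N_y(\langle X^{k+1},V\rangle\mid\CF_k)=\langle X^k,\,z\mapsto E_z V(\xi_{\tau_{k+1}})\rangle$; superharmonicity of $V$ together with positivity of $X^k$ then yields $\N_y(\tilde M_{k+1}\mid\CF_k)\le\tilde M_k$. For the identity, I would first invoke the backbone description of Section~\ref{limitBackbone} applied to the analytic mechanism $\psi_\beta^\gamma$: since $K^{\{1,2\}}_x=2m_2^\gamma V$, the single backbone particle $\xi$ is a $V$-transform of Brownian motion with lifetime $\zeta$, and on $\{\tau_k<\zeta\}$ it exits $D_k$ before branching, so $X^k$ equals the exit from $D_k$ of only the continuously immigrated mass along $\xi$. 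The $\epsilon\to 0$ analogue of \eqref{branch} then gives the conditional Laplace functional $\exp\bigl(-\!\int_0^{\tau_k}\!\eta^\gamma(u^{\phi,\gamma}(\xi_s))\,ds\bigr)$, with $\eta^\gamma(\lambda)=\psi_\beta^{\gamma\,\prime}(\lambda)$ and $u^{\phi,\gamma}(z)=\N^\gamma_z(1-e^{-\langle X^k,\phi\rangle})$. Combined with \eqref{u process} (applied at $g=0$, $u=V$, $f=K^1_xK^2_x$) together with strong Markov at $\tau_k$, which yields $\hat E_y[\Phi(\xi_{\le\tau_k})1_{\{\tau_k<\zeta\}}]=V(y)^{-1}E_y[\Phi(\xi_{\le\tau_k})V(\xi_{\tau_k})]$, this gives
\begin{equation*}
\M^{2,\gamma}_y\bigl(e^{-\langle X^k,\phi\rangle},\,\tau_k<\zeta\bigr)=\frac{1}{V(y)}E_y\Bigl[V(\xi_{\tau_k})\exp\Bigl(-\!\int_0^{\tau_k}\!\eta^\gamma(u^{\phi,\gamma}(\xi_s))\,ds\Bigr)\Bigr].
\end{equation*}

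To take $\gamma\downarrow 0$, a direct computation gives $\psi_\beta^\gamma\uparrow\psi_\beta$ and $\eta^\gamma\uparrow\psi_\beta'$, while a comparison principle for $\tfrac12\Delta u=\psi(u)$ in $D_k$ with boundary data $\phi$ forces $u^{\phi,\gamma}\downarrow u^\phi$, where $u^\phi=\N_\cdot(1-e^{-\langle X^k,\phi\rangle})$. These two monotonicities pull oppositely inside the composition, but the sandwich $\eta^\gamma(u^\phi)\le\eta^\gamma(u^{\phi,\gamma})\le\psi_\beta'(u^{\phi,\gamma})$ (using monotonicity of $\psi_\beta'$) forces pointwise convergence $\eta^\gamma(u^{\phi,\gamma}(\xi_s))\to\psi_\beta'(u^\phi(\xi_s))$. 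Assuming $\phi$ bounded, the first-moment bound $u^{\phi,\gamma}(z)\le E_z[\phi(\xi_{\tau_k})]\le\|\phi\|_\infty$ dominates the exponent uniformly by $\tau_k\psi_\beta'(\|\phi\|_\infty)$, and $V$ is bounded on $\bar D_k$, so dominated convergence passes the limit inside the outer $E_y$. Finally, Lemma~\ref{background}(c) applied to the $\psi_\beta$-excursion measure (noting $\CN_{\tau_k}(e^k_\phi)=\exp(-\int_0^{\tau_k}\psi_\beta'(u^\phi(\xi_s))\,ds)$) identifies the limit with $V(y)^{-1}\N_y(\langle X^k,V\rangle e^k_\phi)=\N_y(\tilde M_k e^k_\phi)$. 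I expect the main obstacle to be this coordinated limit: reconciling the opposing monotonicities $\eta^\gamma\uparrow\psi_\beta'$ and $u^{\phi,\gamma}\downarrow u^\phi$ inside the composition, while handling the singularity of $\psi_\beta'$ at $0$ near $\partial D_k$ where $u^\phi$ approaches $\phi$; the sandwich above is the key device that makes the passage clean.
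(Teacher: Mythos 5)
Your proof is correct and follows essentially the same route as the paper, but it spells out a great deal more detail. The paper's proof is just two sentences: it appeals to the $m=1$ case of the inductive argument in Theorem~\ref{theorem}, noting that $\{\tau_k<\zeta\}$ is the event $\{\Upsilon^k\approx(\{1,2\})\}$. Unwound, that $m=1$ case is exactly the combination of \eqref{branch}, the complementary form of \eqref{u process} for the harmonic part of $V$ on $D_k$, and Lemma~\ref{background}(c), which you reconstruct from scratch; so on this score the two arguments coincide.

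Where you go beyond the paper is in the $\gamma\downarrow 0$ passage. The paper leaves entirely implicit the step from
$\frac{1}{V(y)}\N^\gamma_y(e^k_\phi\langle X^k,V\rangle)$
to
$\frac{1}{V(y)}\N_y(e^k_\phi\langle X^k,V\rangle)$,
whereas you identify and resolve the genuine subtlety there: $\eta^\gamma\uparrow\psi_\beta'$ while $u^{\phi,\gamma}\downarrow u^\phi$, so the two monotonicities fight inside the composition $\eta^\gamma(u^{\phi,\gamma})$, and your sandwich $\eta^\gamma(u^\phi)\le\eta^\gamma(u^{\phi,\gamma})\le\psi_\beta'(u^{\phi,\gamma})$ together with the uniform bound $u^{\phi,\gamma}\le\|\phi\|_\infty$ and boundedness of $V$ on $\bar D_k$ is exactly what makes dominated convergence work. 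You also supply the supermartingale claim directly from superharmonicity of $V=U(K^1_xK^2_x)$ and the first-moment formula for nested exit measures (valid here since $\psi_\beta'(0)=0$), which the paper does not discuss at all.

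Two small remarks. First, your domination step implicitly uses that $\phi$ is bounded; the theorem says only $\phi\ge 0$, and Laplace functionals are customarily taken against bounded nonnegative test functions, so this is a standard convention but worth stating. Second, the constant relating $K^{\{1,2\}}_x$ to $V$ (you write $2m_2^\gamma$, the paper's explosion subsection effectively uses $m_2^\gamma$, and \eqref{alternateindexing} gives $\psi''(0)$) is an inconsequential normalization that cancels in $\tilde M^\gamma_k$, as you note, so it does not affect the argument.
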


\begin{proof}
Just follow the argument of Theorem \ref{theorem}. In the notation of that result, the event $\{\tau_k<\zeta\}$ is exactly the same as $\{\Upsilon^k\approx (\{1,2\})\}$, and the probabilities of such events entered into
the proof of Theorem \ref{theorem}.
\end{proof}

The interpretation of the supermartingale property is that we lose absolute continuity between the two measures in the limit, with $\M_y^{2,\gamma}$ increasingly concentrating mass near a set of $\N_y$-measure 0. Namely the set where mass explodes. 

A similar analysis can be carried out in the case $n>2$. One shows by induction that as $\gamma\to 0$, one has $b(j,0,y)=(-1)^j\psi^\gamma_\beta(0)\sim c^1_{\beta,j}\gamma^{1+\beta-j}$, and $K^A_x\sim c^{|A|}_\beta \gamma^{1+\beta-|A|}U(\prod_{k\in A}K^k_x)$ for $|A|>2$. It follows that asymptotically there is a single branch point with $|A|$ branches, at which the backbone changes from a single $K^A_x$ transform, to $|A|$ transforms by $K^k_x$, $k\in A$. Moreover, the mass created at this branch point blows up as $\gamma\to 0$. For example, in the case $n=3$, $\sigma=\{\{1\},\{2\},\{3\}\}$ has probability proportional to $\gamma^{\beta-2}$, while $\sigma=\{\{1,2\},3\}$ has probability proportional to $\gamma^{\beta-1}\gamma^{\beta-1}=\gamma^{2\beta-2}$ which is of lower order of magnitude.

\subsection{Other orders of limits}\label{otherlimits}

Still in the analytic case, with $n=2$, the same arguments would have handled a slightly more general conditioning. Namely, condition on the exit measure charging both $\Delta^{z_1}_{\epsilon_1}$ and $\Delta^{z_2}_{\epsilon_2}$, where $\epsilon_1>0$ and $\epsilon_2>0$. First send $\epsilon_1\to 0$ and then $\epsilon_2\to 0$. It is not hard to check that this gives precisely the same limiting object as before. So if we apply this procedure to the $\psi_\beta^\gamma$-super Brownian motion, and then let $\gamma\downarrow 0$, the mass should still blow up, but the backbone should take the form of a $U(K^1_xK^2_x)$-transform, splitting into a $K^1_x$-transform and a $K^2_x$-transform. 

With this modification, one should be able to treat a different approach to the stable case than given above. Namely, start out with the actual $\psi_\beta$-super Brownian motion. Condition its exit measure to charge $\Delta^{z_1}_{\epsilon_1}$ and $\Delta^{z_2}_{\epsilon_2}$. Then send $\epsilon_1\to 0$ followed by $\epsilon_2\to 0$. The following informal calculation suggests how the backbone should behave in the limit.

For $i=1,2$ set
\begin{align*}
&u^i=v^i=u^i_{\epsilon_i}=v^i_{\epsilon_i}=\N_\cdot( X^D(\Delta^{z_i}_{\epsilon_i})>0)\\
&u^{12}=u^{12}_{\epsilon_1,\epsilon_2}=\N_\cdot( X^D(\Delta^{z_1}_{\epsilon_1}\cup\Delta^{z_2}_{\epsilon_2})>0)\\
&v^{12}=v^{12}_{\epsilon_1,\epsilon_2}=u^1+u^2-u^{12}=
\N_\cdot( X^D(\Delta^{z_1}_{\epsilon_1})>0, X^D(\Delta^{z_2}_{\epsilon_2})>0).
\end{align*}
Then 
\begin{align*}
\frac12\Delta v^{12}&=\frac12\Delta [u^1+ u^2-u^{12}]
=\psi_\beta(u^1)+\psi_\beta(u^2)-\psi_\beta(u^{12})\\
&=\psi_\beta(v^1)+\psi_\beta(v^2)-\psi_\beta(v^1+v^2-v^{12})\\
&=\psi_\beta(v^1)+[\psi_\beta(v^2)-\psi_\beta(v^1+v^2)]+[\psi_\beta(v^1+v^2)-\psi_\beta(v^1+v^2-v^{12})]\\
&\sim c_\beta\Big([v^1]^{1+\beta}-[1+\beta][v^2]^\beta v^1+[1+\beta][v^1+v^2]^\beta v^{12}\Big).
\end{align*}
Fix $x$ and rescale, letting $\bar v^1=v^1(\cdot)/v^1(x)$, $\bar v^2=v^2(\cdot)/v^2(x)$, 
$\bar v^{12}=v^{12}(\cdot)/v^1(x)v^2(x)^\beta$. If all these quantities remain bounded, then sending $\epsilon_1\to 0$ should give
$$
\frac12\Delta\bar v^{12}\sim(1+\beta)c_\beta\Big([v^2]^\beta\bar v^{12}-[\bar v^2]^\beta\bar v^1\Big).
$$
So sending $\epsilon_1\to 0$ and then $\epsilon_2\to 0$ should give
$$
\frac12\Delta\bar v^{12}\sim-(1+\beta)c_\beta[\bar v^2]^\beta\bar v^1.
$$
Though we again expect this conditioning to have a mass which blows up in the limit, the above suggests that the backbones should still converge weakly. But this time the limit should be a $U([K^2_x]^\beta K^1_x)$-transform, splitting into a $K^1_x$-transform and a $K^2_x$-transform. 

Call the first part of the backbone the {\sl trunk}. Taking limits in the other order should produce a trunk that is a $U([K^1_x]^\beta K^2_x)$-transform. So taking limits in which both $\epsilon_1\to 0$ and $\epsilon_2\to 0$ in a coordinated way should give trunks that are transforms by 
$$
U\Big([K^1_xK^2_x]^\beta [\theta K^1_x+(1-\theta)K^2_x]^{1-\beta}\Big),
$$ for arbitrary $0\le\theta\le 1$. 

We will not try to make the above informal argument rigorous. But we originally found it puzzling. Not so much because uniqueness of limits breaks down in the case of the $\psi_\beta$-super Brownian motion. But rather because the $U(K^1_xK^2_x)$ trunk we obtained earlier does not appear among the possible limits when taken this other way. This suggests that there should be a way of capturing a broader class of limits, encompassing both types obtained above. Or at least, of obtaining both types of limits by a common procedure. Carrying this out rigorously, in the current context, seems more work than it is perhaps worth, and we do not claim to have done so in full detail. But in the next section, we will find a simpler setting, in which this can be done more more easily. 

\medskip

\subsection{A class of martingales, with $n=2$} \label{spectrum}

In this section, we focus on a technically simpler collection of martingales, exhibiting some of the same behaviour as found above. We  only present the heuristic idea and do not present detailed proofs in this section. The heuristic development can be made rigorous but we have chosen not to include this, as our aim in this section is to illustrate how a spectrum of limits can be obtained via various limiting procedures..

Let $\psi$ satisfy (A1). Let $u, f, g, v>0$ be bounded solutions on $D$ to the following equations:
\begin{equation}\label{martclass}
\left\{
\begin{aligned}
\frac12\Delta u &=\psi(u)\\
\frac12\Delta f -\psi'(u)f&=0\\
\frac12\Delta g -\psi'(u)g&=0\\
\frac12\Delta v -\psi'(u)v&=-\psi''(u)fg
\end{aligned}
\right.
\end{equation}
as well as $v=0$ on $\partial D$ (so $v$ is an $L_{\psi'\circ u}$-potential). Call $\mathcal{M}_2$ the set of $(u,f,g,v)$ so obtained. 

Under $\M_x$, for every $(u,f,g,v)\in\mathcal{M}_2$ define:
$$
M_k(u,f,g,v)=e^{-\langle X^k,u\rangle}\Big[ \langle X^k,v\rangle + \langle X^k,f\rangle\langle X^k,g\rangle\Big].
$$
It can be verified that  $M_k(u,f,g,v)$ is a $\mathcal{F}_k$ martingale. Moreover, we can realize the Girsanov transform 
$$
\M_y^{(u,f,g,v)}(\Phi_k)=\frac{1}{v(y)}\N_y(\Phi_kM_k^{(u,f,g,v)})
$$
by such martingales, as follows: Let $(u,f,g,v)\in\mathcal{M}_2$. Then $\M_y^{(u,f,g,v)}$ can be described as follows: Start a $v$-transform of the $L_{\psi'\circ u}$ process till it dies in $D$. At this point, start two paths that run to $\partial D$, respectively an $f$-transform and a $g$-transform of the $L_{\psi'\circ u}$ process.  Mass is created at the branch point, according to the law $\mu^2$, and continuously along the backbone, according to the L\'evy exponent $\eta$. The mass then evolves as an unconditioned super-Brownian motion. 

In this simplified context, the analogue to the question considered in previous sections is the following: Let $(u_n,f_n,g_n,v_n)\in\mathcal{M}_2$, with each term $\to 0$. Find the limit points, either of $\M_y^{(u_n,f_n,g_n,v_n)}$, or of the backbone. 

When $\psi$ satisfies (A1), there is a systematic answer to this question. For a fixed $x$, we  renormalize by constants $a_n$ and $b_n$, so that
$$
\bar f_n=\frac{f_n}{a_n}, \quad
\mbox{ and } \bar g_n=\frac{g_n}{b_n}, \quad
$$ 
converge to non-zero values. Then $\bar f_n$, $\bar g_n$, and $\bar v_n=\frac{v_n}{a_nb_n}$ all have non-zero limits $f, g, v$ which satisfy 
$$
\frac12 \Delta f = \frac12\Delta g=0, \quad v=\psi''(0)U(fg). 
$$
Moreover $\M_y^{(u_n,f_n,g_n,v_n)}$ converges in the weak topology to $\M_y^{(0,f,g,v))}$. 

In the case of $\psi_\beta$, the actual Girsanov transforms $\M_y^{(u_n,f_n,g_n,v_n)}$ degenerate as before, because mass blows up. We look at the limits of the backbones instead. 

Let $a_n, b_n, c_n$ all be sequences of positive reals, that all $\to 0$. Assume that $a_nb_n=o(c_n^{1-\beta})$. Let $\phi_f, \phi_g, \phi_u$ be smooth functions on $\partial D$, that are bounded away from 0.  Define $(u_n,f_n,g_n,v_n)$ to be the solutions to \eqref{martclass}, using boundary conditions $a_n\phi_f$ for $f_n$, $b_n\phi_g$ for $g_n$, and $c_n\phi_g$ for $u_n$. Set $d_n=\beta(1+\beta)a_n b_n c_n^{-(1-\beta)}$ and
$$
\bar u_n=\frac{u_n}{c_n}, \quad
\bar f_n=\frac{f_n}{a_n}, \quad
\bar g_n=\frac{g_n}{b_n}, \quad
\bar v_n=\frac{v_n}{d_n}.
$$
\eqref{martclass} gives that
$$
\frac12\Delta v_n-(1+\beta)u_n^\beta v_n=-\beta(1+\beta)u_n^{-(1-\beta)}a_nb_n\bar f_n\bar g_n,
$$
so 
$$
\frac12 \Delta \bar v_n-(1+\beta)u_n^\beta \bar v_n=-\bar u_n^{-(1-\beta)}\bar f_n\bar g_n.
$$
Now taking limits we have $\bar u_n\to u$, $\bar f_n\to f$, $\bar g_n\to g$, and $\bar v_n\to v$ where $u$, $f$, and $g$ are the harmonic functions with boundary values $\phi_u$, $\phi_f$, $\phi_g$, and $v=U(u^{-(1-\beta)}fg)$. The backbones converge weakly  to a $v$-transform branching into an $f$-transform and a $g$-transform. 

We conclude this section with two of examples analogous to the limits obtained in the previous sections.

\begin{example}\label{abc1}
Take $\phi_u=1$. The trunk is a $U(fg)$-transform, as in Section \ref{explosion}
\end{example}

\begin{example}\label{abc2}
Take $\phi_u=\phi_f$. The trunk is a $U(f^\beta g)$-transform, as in Section \ref{otherlimits}
\end{example}

In other words, this gives a general class of limiting objects, which encompasses examples analogous to  both types of limits obtained earlier. In that sense, the model of this section interpolates between these examples, and helps explain the variety of limits obtained.

\end{document}